\numberwithin{equation}{section}
\renewcommand\AA{\mathbb{A}}
\newcommand\QQ{\mathbb{Q}}
\newcommand\NN{\mathbb{N}}
\newcommand\RR{\mathbb{R}}
\newcommand\R{\mathcal{R}}
\newcommand\mcal{\mathcal}
\newcommand\UU{\mathcal{U}}
\renewcommand{\SS}{\mathcal{S}} 
\newcommand\ZZ{\mathbb{Z}}
\newcommand\PP{\mathbb{P}}
\newcommand\x{\mathbf{x}}
\newcommand\y{\mathbf{y}}
\newcommand\dd{\mathbf{d}}
\newcommand\kk{\mathbf{k}}
\renewcommand\d{\,\mathrm{d}}
\DeclareMathOperator{\hcf}{gcd}
\DeclareMathOperator{\Spec}{Spec}
\DeclareMathOperator{\disc}{disc}
\renewcommand{\le}{\leqslant}
\renewcommand{\ge}{\geqslant}
\renewcommand{\leq}{\leqslant}
\renewcommand{\geq}{\geqslant}
\newcommand\e{\eta}
\newcommand\ve{\varepsilon}
\newcommand\D{\Delta}
\newcommand\DD{\mathfrak{D}}
\renewcommand\phi{\varphi}
\newcommand\phid{\phi^\dagger}
\newcommand\al{\alpha}
\newcommand\be{\beta}
\renewcommand{\rho}{\varrho}
\newcommand{\bd}{\mathbf{d}}
\DeclareMathOperator{\Res}{Res}
\newcommand{\ma}{\mathbf}
\newcommand{\M}{\mathbf{M}}
\DeclareMathOperator{\Mod}{mod}
\renewcommand{\bmod}[1]{\,(\Mod{ #1})}
\newcommand{\cU}{\mathcal{U}}
\newcommand{\cA}{\mathcal{A}}
\renewcommand{\v}{\mathbf{v}}
\newcommand\sfl{\mathsf{\Lambda}}
\newtheorem{thm}{Theorem}
\newtheorem*{thm*}{Theorem}
\newtheorem{lemma}{Lemma}
\theoremstyle{definition}
\newtheorem*{ack}{Acknowledgements}
\DeclareMathOperator{\vol}{vol}
\begin{document}

\title[Binary forms and Ch\^atelet surfaces]{Binary forms as sums of two squares and Ch\^atelet surfaces}

\author{R. de la Bret\`eche}
\address{
Institut de Math\'ematiques de Jussieu\\
Universit\'e Paris Diderot-Paris 7\\
UFR de Math\'ematiques\\
Case 7012\\
B\^atiment Chevaleret\\
75205 Paris cedex 13\\ 
France}
\email{breteche@math.jussieu.fr}

\author{T. D. Browning}
\address{School of Mathematics\\
University of Bristol\\ Bristol\\ BS8 1TW\\ United Kingdom}
\email{t.d.browning@bristol.ac.uk}

\date{\today}

\subjclass{11D45; 11N37, 14G25}

\begin{abstract}
The representation of integral binary forms as sums of two squares is
discussed and applied to establish the
Manin conjecture for
certain Ch\^atelet surfaces over $\QQ$.
\end{abstract}

\maketitle
\tableofcontents

\section{Introduction}\label{s:intro}

Let $X$ be a proper smooth model of the affine surface 
\begin{equation}\label{eq:chat}
y^2-az^2=f(x), 
\end{equation}
where $a\in \ZZ$ is not a square and $f\in \ZZ[x]$ is a
polynomial of degree $3$ or $4$ without repeated roots. 
This defines a Ch\^{a}telet surface over $\QQ$
and we will be interested here
in providing a quantitative description of the density of
$\QQ$-rational points on $X$.
The anticanonical linear system $|-K_X|$ 
has no base point and gives a morphism
$\psi:X \rightarrow \PP^4$. 
This paper is motivated by a conjecture of Manin 
\cite{f-m-t} applied to the counting function
$$
N(B)=\#\{x \in X(\QQ): (H_4\circ \psi)(x)\leq B\},
$$
for a suitably metrized exponential height 
$H_4:\PP^4(\QQ)\rightarrow \RR_{>0}$, whose precise  
definition we will delay until \S \ref{s:UT}.
The conjecture predicts that $N(B)\sim c_X B(\log B)^{r_X-1}$
for some constant $c_X>0$, where $r_X$
is the rank of the Picard group associated to $X$. 
Peyre \cite{p} 
has given a conjectural interpretation of the constant $c_X$.

Getting an upper bound for $N(B)$ is considerably easier and the
second author \cite{tb} has shown that 
$N(B)\ll B(\log B)^{r_X-1}$ for any Ch\^atelet surface. 
When suitable assumptions are made on $a$ and $f$ in \eqref{eq:chat}
one can go somewhat further. Henceforth we assume that $a=-1$.
In recent joint work of the authors with Peyre
\cite{isk}, the Manin conjecture is confirmed for a family of  Ch\^atelet
surfaces that corresponds to $f(x)$ splitting
completely into linear factors over $\QQ$ in \eqref{eq:chat}. 
Our aim in the present investigation is to better understand the 
behaviour of $N(B)$ when the factorisation of $f(x)$ into irreducibles
contains an irreducible  polynomial of degree $3$. Here, as throughout
this paper, we take irreducibility to mean irreducibility over $\QQ$.
In this case it follows from the work of Colliot-Th\'el\`ene, Sansuc
and Swinnerton-Dyer \cite{ct1,ct2} that 
$X$ is $\QQ$-rational and so satisfies weak approximation.  
Moreover it is straightforward to calculate that 
$r_X=2$ (see \cite[Lemma 1]{tb}, for example). With this in mind we
see that the following result confirms the Manin prediction.

\begin{thm}\label{main}
We have $
N(B)\sim c_X B\log B,
$ 
as $B\rightarrow \infty$, 
where $c_X$ is the constant predicted by Peyre. 
\end{thm}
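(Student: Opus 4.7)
The overall strategy is descent followed by analytic number theory. Using the description of the universal torsor for a Ch\^atelet surface due to Colliot-Th\'el\`ene--Sansuc--Swinnerton-Dyer (cf.\ \cite{ct1,ct2}), I would parametrise the $\QQ$-points of $X$ by primitive integer pairs $(x_0,x_1)$ together with an integral representation of the binary homogenisation $F(X_0,X_1)$ of $f$ as a sum of two coprime squares $y^2+z^2$. Unfolding the anticanonical height $H_4\circ\psi$ and summing over representations reduces the asymptotic for $N(B)$ to one for a weighted sum of the shape
\[
S(P)= \osum_{\substack{(x_0,x_1)\in\ZZ^2\\ H(x_0,x_1)\leq P}} r^*\bigl(F(x_0,x_1)\bigr),
\]
where $r^*$ is an appropriately weighted sum-of-two-squares function, $H$ is the induced height on $\PP^1$, and the flat sum ranges over primitive pairs in the real region cut out by the image of $X(\RR)$. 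The transition from $N(B)$ to $S(P)$ also absorbs a standard M\"obius unwrapping to enforce coprimality of the $(y,z)$ variables.

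\medskip

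The technical core of the proof is to establish $S(P)\sim cP\log P$ when $F$ contains an irreducible cubic factor; this is precisely the binary-forms result announced in the title. I would invoke the identity $r(n)=4(\mathbf{1}\ast\chi_{-4})(n)$ and exchange summations, writing
\[
S(P)\approx 4\sum_d \chi_{-4}(d)\osum_{\substack{(x_0,x_1)\\ H(x_0,x_1)\leq P\\ d\mid F(x_0,x_1)}}1,
\]
so that the analysis reduces to counting zeros of $F$ modulo $d$ inside a box. This zero-count factorises multiplicatively and is, prime by prime, controlled by the splitting behaviour of $p$ in the cubic number field attached to the irreducible cubic factor of $F$. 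The principal obstacle is to obtain an asymptotic, rather than merely an upper bound, uniformly in $d$ well beyond the trivial threshold $d\ll P^{1/\deg F}$: pushing into the range where the tail of the character sum contributes requires exploiting genuine oscillation from $\chi_{-4}$, via large-sieve inequalities or a contour-shift argument on the associated Dirichlet series. This is qualitatively harder than the completely split situation treated in \cite{isk}, where only linear factors of $f$ occurred and the local counting was correspondingly elementary.

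\medskip

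Once $S(P)\sim cP\log P$ is in hand, it remains to verify that $c$ matches Peyre's prediction $c_X$. This is a local--global bookkeeping exercise: each finite prime contributes a factor encoding the density of $F(x_0,x_1)$ being a local norm from $\QQ(i)$, which one matches with $\tau_p(X)$ up to the expected convergence factors; the positivity constraint carving out the image of $X(\RR)$ yields the archimedean density $\tau_\infty(X)$; and the combinatorial factor produced by the $\log P$ saving, together with the factor $\alpha(X)$ coming from the effective cone of the rank-two Picard lattice and the factor $\beta(X)=\#H^1(\Gal(\overline{\QQ}/\QQ),\Pic(X_{\overline{\QQ}}))$, reassemble into $c_X=\alpha(X)\beta(X)\tau_\infty\prod_p\tau_p$, completing the proof.
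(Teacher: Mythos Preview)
Your overall architecture---descent to a torsor, reduction to a sum of $r$-values over a binary form, and finally matching local densities to Peyre's constant---is the right shape and agrees with the paper. But the central technical step is not as you describe it, and the gap matters.

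The paper does \emph{not} work with $r(F)$ for the quartic $F=LC$ directly. Instead, after the torsor step one has the equation $y^2+z^2=t^2L(u,v)C(u,v)$, and the key move is a multiplicative identity (Lemma~\ref{lem:r-123}) that decomposes $r(t^2LC)$ into a weighted sum of products $r(t^2/\cdot)\,r(L/\cdot)\,r(C/\cdot)$. This converts the problem into estimating
\[
S(X)=\sum_{\x\in\ZZ^2\cap X\mcal{R}} r(L(\x))\,r(C(\x)),
\]
which is Theorem~\ref{mainS}. The advantage of the product form is that one can run a \emph{bilinear} hyperbola method: split the divisor of $C$ at $X^{3/2}$ and the divisor of $L$ at $Y=X^{1/2}(\log X)^{-C}$, obtaining four main pieces $S_{\pm,\pm}$ plus a ``middle'' term $T$ coming only from divisors of the \emph{linear} form lying in $[Y,X/Y]$. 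That middle term is then bounded by fixing $L(\x)=m$, reducing to a one-variable Nair--Tenenbaum estimate, and invoking the anatomy-of-integers bound (the $\eta=1-\frac{1+\log\log 2}{\log 2}$ constant) for integers with a divisor in a short multiplicative interval. None of this is large sieve or contour shifting; the level of distribution (Lemma~\ref{LOD}) comes from lattice point counting \`a la Daniel and Marasingha.

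Your proposal to expand $r(F)=4\sum_{d\mid F}\chi(d)$ and push $d$ past $P^{1/4}$ by ``large-sieve inequalities or a contour-shift argument'' does not match any known route to an asymptotic here. The difficulty with a single degree-$4$ divisor sum is precisely that the hyperbola split leaves no tractable middle range; the paper's insight is that the factorisation $F=LC$ lets one confine the hard range to the degree-$1$ piece. Without that splitting, or a concrete substitute for it, your middle paragraph is a restatement of the problem rather than a plan for solving it.
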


Our result bears comparison with recent work of 
Iwaniec and Munshi \cite{munshi}, where a counting function analogous
to $N(B)$ is studied as $B\rightarrow \infty$. However, using methods
based on the Selberg sieve, they are only able to produce a lower
bound for the counting function which is essentially of the correct order of
magnitude, a deficit that is remedied by our result.

Fix a constant $c>0$ once and for all. 
 We will work with compact subsets
$\mcal{R} \subset \RR^2$ 
whose boundary is a
piecewise continuously
differentiable closed curve of length
$$  \partial(\mcal{R})\leq c
\sup_{\x=(x_1,x_2)\in\mcal{R}}\max\{|x_1|,|x_2|\}= c r_\infty,
$$
say. For any
parameter $X>0$ let $X\mcal{R}= \{X\x: \x\in \mcal{R}\}$.
Our proof of the theorem relies upon estimating the sum 
$$
S(X)=\sum_{\x\in\ZZ^2\cap X\mcal{R}}r\big(L(\x)\big) r\big(C(\x)\big),
$$
where $r$ 
denotes the sum of two squares function, and $L,C$ are suitable binary forms
of degree $1$ and $3$, respectively, that are defined over $\ZZ$. 
Recall that $r(n)=4\sum_{d\mid n}\chi(d)$, where 
$\chi$ is the non-principal character modulo $4$.
For any $\bd=(d_1,d_2)\in \NN^2$ we let 
  \begin{equation}\label{defrho}
\rho(\bd)=\rho(\bd;L,C)=\# 
\{ \x \in \ZZ^2\cap [0,d_1d_2)^2 : d_1 \mid L (\x), ~ d_2\mid C(\x)\}.
\end{equation}
Furthermore, we
define $\mcal{E}$ to be the set of $m\in \NN$ such that there exists 
$\ell\in\ZZ_{\geq 0}$ for which 
$m\equiv 2^{\ell} \bmod{2^{\ell+2}}$. We denote by
$\mcal{E} \bmod{2^n}$ the projection of $\mcal{E}$ modulo $2^n$.
The following result forms the technical core of this paper.

\begin{thm}\label{mainS}
Let $\ve>0$ and let $\eta=1-\frac{1+\log\log 2}{\log 2}>0.086$.
Let $C\in \ZZ[\x]$ be an irreducible cubic form and let $L\in
\ZZ[\x]$ be a non-zero linear form. 
Assume that $L(\x)>0$ and $C(\x)>0$ for every $\x
\in \mcal{R}$. Then we have 
$$
S(X)
= \pi^2 \vol(\mcal{R})X^2\prod_p K_p   +
O\big(X^2(\log X)^{- \eta+\varepsilon}\big),
$$ 
where 
$$
K_p=
\Big(1-\frac{\chi(p)}{p}\Big)^2
\sum_{\nu_1,\nu_2\geq 0} 
 \frac{\chi(p^{\nu_1+\nu_2}) \rho(p^{\nu_1},p^{\nu_2})
 }{p^{2\nu_1+2\nu_2 }}
$$
if $p>2$ and 
$$
K_2=4\lim_{n\to \infty}2^{-2n}   
\#\left\{\x \in (\ZZ/2^n
\ZZ)^2: 
\begin{array}{l}  L(\x)\in  \mcal{E}\bmod{2^n}  \\   C(\x)\in
 \mcal{E}\bmod{2^n}\end{array}\right\}.
$$
The implied constant in this estimate is allowed to depend on $\ve,
L,C$ and $r_\infty$.
\end{thm}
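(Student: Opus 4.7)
The plan is to apply the identity $r(n)=4\sum_{d\mid n}\chi(d)$ to both factors in $S(X)$, swap the order of summation, and isolate the main term via a carefully chosen truncation of the resulting double divisor sum. Expanding gives
$$
S(X)=16\sum_{d_1,d_2\geq 1}\chi(d_1)\chi(d_2)\,A(d_1,d_2;X),
$$
where $A(d_1,d_2;X)=\#\{\x\in\ZZ^2\cap X\mcal{R}:d_1\mid L(\x),\,d_2\mid C(\x)\}$. A standard lattice-point estimate, exploiting the perimeter bound $\partial(\mcal{R})\ll r_\infty$, yields
$$
A(d_1,d_2;X)=\frac{\rho(d_1,d_2)}{(d_1d_2)^2}\vol(X\mcal{R})+O\!\left(\rho(d_1,d_2)\left(1+\tfrac{X}{d_1d_2}\right)\right).
$$
Formally summing the leading piece over all $(d_1,d_2)$ produces the predicted main term $\pi^{2}\vol(\mcal{R})X^{2}\prod_{p}K_{p}$; the Euler product at odd primes emerges from the multiplicativity of $\rho$ on coprime arguments, and the factor $\pi^{2}$ reflects the Landau density $\pi$ contributed by each copy of $r$.

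I would then introduce a level-of-distribution parameter $Y=Y(X)$ and restrict the divisor sum to $d_1d_2\leq Y$. In that range, inserting the asymptotic for $A(d_1,d_2;X)$ and completing the divisor sum yields the main term together with an error of shape $O(Y(\log X)^{O(1)})$ from the perimeter term, plus an error of shape $O(X^{2}\sum_{d_1d_2>Y}\rho(d_1,d_2)/(d_1d_2)^{2})$ from completing the Dirichlet series. The crucial step is then to bound the complementary sum
$$
T(X;Y)=\sum_{\x\in\ZZ^2\cap X\mcal{R}}\,\Big|\sum_{\substack{d_1\mid L(\x),\,d_2\mid C(\x)\\ d_1d_2>Y}}\chi(d_1)\chi(d_2)\Big|
$$
by $O(X^{2}(\log X)^{-\eta+\varepsilon})$, after which the final exponent in the theorem emerges by optimally choosing $Y$ to balance this against the truncation error.

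The crux of the argument is the bound on $T(X;Y)$. Discarding character cancellation via $|\chi|\leq 1$ and dyadically decomposing in $d_1d_2\asymp V\geq Y$, the problem reduces to estimating, summed over $\x\in\ZZ^{2}\cap X\mcal{R}$, the number of divisor pairs $(d_1,d_2)$ of $(L(\x),C(\x))$ with $d_1d_2\asymp V$. This quantity is exactly a two-variable Hooley $\Delta$-function evaluated on the pair of binary forms, and the exponent $\eta=1-(1+\log\log 2)/\log 2$ is precisely the threshold arising in the mean-value theory of $\Delta(n)$ developed by Erd\H{o}s, Ford and Tenenbaum. To transfer such bounds from integer arguments to values of $L$ and $C$ I would appeal to Nair--Tenenbaum-style mean-value estimates for sub-multiplicative functions evaluated along binary-form arguments; the positivity of $L,C$ on $\mcal{R}$ is exactly what is needed to apply such estimates.

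The principal obstacle is thus establishing the two-variable Hooley-type bound uniformly in the coefficients of $L$ and $C$, and translating it into the required logarithmic saving of order $\eta$. Once this analytic input is secured, the remaining verifications are routine: the Euler product $\prod_p K_p$ at odd primes converges by standard Dirichlet $L$-function theory, and the $2$-adic factor $K_2$ is extracted directly as a density on the set $\mcal{E}$, reflecting the fact that $\chi(2)=0$ forces both $L(\x)$ and $C(\x)$ to contribute only through their $2$-adic valuations and the residues of their odd parts modulo $4$.
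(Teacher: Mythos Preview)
Your approach has the right ingredients but assembles them incorrectly, and the decomposition you propose cannot yield the stated error term.

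The central gap is your truncation at $d_1d_2\le Y$ followed by a bound on the complementary sum $T(X;Y)$ via $|\chi|\le 1$. Since $C(\x)\asymp X^3$ on $X\mcal{R}$, the divisor $d_2$ ranges up to $X^3$; hence for any $Y=X^{O(1)}$ the overwhelming majority of divisor pairs $(d_1,d_2)$ of $(L(\x),C(\x))$ satisfy $d_1d_2>Y$. Discarding the characters therefore bounds $T(X;Y)$ by essentially $\sum_{\x}\tau(L(\x))\tau(C(\x))\asymp X^{2}(\log X)^{O(1)}$, with no logarithmic saving at all. Dyadic decomposition in $d_1d_2\asymp V$ does not rescue this: there are $\asymp\log X$ dyadic ranges, and even a sharp two-variable Hooley bound for each range sums to something of the same order. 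Your naive lattice-point error $O(\rho(d_1,d_2)(1+X/(d_1d_2)))$ is also insufficient in the range it would need to cover, since $d_2$ must be taken past $X$.

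What the paper does instead is apply the divisor-switching symmetry $d\leftrightarrow n/d$ separately to $r(L)$ and $r(C)$. After a preliminary $2$-adic change of variables (which is where $K_2$ actually comes from; it is \emph{not} read off from the Euler product), one writes $r(C)=4A_{+}+4A_{-}$ with both pieces supported on $d_2\ll X^{3/2}$, and $r(L)=4B_{+}+4B_{0}+4B_{-}$ with thresholds $Y$ and $X/Y$, where $Y=X^{1/2}(\log X)^{-C}$. The four ``corner'' sums $S_{\pm,\pm}$ are then treated by a genuine level-of-distribution result (Lemma~\ref{LOD}), far stronger than the elementary lattice-point count. The remaining piece $T$ involves only those $\x$ for which $L(\x)$ has a divisor in the short multiplicative interval $(Y,X/Y]$, of logarithmic length $\asymp\log\log X$; the exponent $\eta$ enters here through the \emph{one-variable} Ford-type bound on the density of such integers (invoked as \cite[Lemma~6]{4linear}). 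In this piece $r(C(\x))$ is kept intact and bounded on average by a Nair--Tenenbaum argument. So your instinct that $\eta$ comes from divisors-in-intervals and that Nair--Tenenbaum is relevant is correct, but both are applied to the linear form alone, after the hyperbola decomposition has isolated a genuinely thin set of $\x$.
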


The sum $S(X)$ is directly linked to the density of integral
points on the affine variety
$$
L (\x)=s_1^2+t_1^2,\quad C(\x)=s_2^2+t_2^2.
$$
Arguing along similar lines to the proof of
\cite[Theorem 4]{4linear}, one can interpret the leading constant 
in our estimate
for $S(X)$ as a product of local densities 
for this variety.  
In fact this variety is related to a certain intermediate
torsor that parametrises rational points on the Ch\^atelet surfaces
under consideration in this paper.

The asymptotic formula in Theorem \ref{mainS} should be taken as part
of an ongoing programme to   
understand the average order of arithmetic functions 
running over the values of binary quartic forms. 
One of the starting points for this topic lies in the work of 
Daniel \cite{D99}, where the analogue of $S(X)$ is estimated
asymptotically with $r(L)r(C)$ replaced by 
$r(x_1^4+x_2^4)$. A treatment of $r(L_1)\cdots r(L_4)$ 
for non-proportional linear forms $L_1,\ldots, L_4$ has been
accomplished by Heath-Brown
 \cite{h-b03}, which in turn has been improved by the authors
 \cite{4linear}.   Moreover, our allied
investigation \cite{L1L2Q} could easily be adapted to handled the
analogue of $S(X)$ featuring $r(L_1)r(L_2)r(Q)$
when $L_1,L_2$ are non-proportional linear forms and $Q$ is an irreducible
binary quadratic form. Dealing with
$r(Q_1)r(Q_2)$, for non-proportional irreducible quadratic forms
$Q_1,Q_2$, or 
even $r(F)$ for a general irreducible quartic form $F$, seems to present a
more serious challenge.

\begin{ack} 
It is pleasure to thank the referee for carefully reading the
manuscript and making numerous helpful comments, including drawing our
attention to an 
oversight in the original treatment of Lemma~\ref{lem:UU-upper}.
While working on this paper the second author was supported by EPSRC
grant number \texttt{EP/E053262/1}. Part of this work was carried out
while the second author was visiting the first author at 
the {\em Universit\'e Paris 7 Denis Diderot}, funded by 
ANR ``Points entiers points rationnels''.
\end{ack}

\section{Polynomials modulo $n$}

Our analysis will require information about the number of solutions to
various systems of polynomial equations modulo $n$. For any 
polynomial $f\in \ZZ[x]$ of degree $d\geq 2$, we define  
the content of $f$ to  be the greatest common divisor of its  
coefficients. Thus a polynomial has content $1$ if and only  
if it is primitive.  Let  
\begin{equation}
  \label{eq:f}
\rho_f(n)=\#\{ x\in\ZZ/n\ZZ:  f(x)\equiv 0\bmod n\}.
\end{equation}
Since $\rho_f(n)$ is a multiplicative function of $n$ it will suffice
to analyse it for prime powers.
We begin by recording the following upper bounds.

\begin{lemma}\label{dan2}  
Assume that $\disc(f)\neq 0$ and that $p$ is a prime which does not
divide the content of $f$, with $p^\mu \| \disc(f)$.  Then for any
$\nu \geq 1$ we have 
$$
\rho_f(p^{\nu}) \leq 
d\min\big\{p^{\frac{\mu}{2}}, p^{(1-\frac{1}{d})\nu}, p^{\nu-1}\big\}.
$$
\end{lemma}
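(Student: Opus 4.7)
The plan is to establish each of the three upper bounds in the $\min$ separately. The elementary bound $\rho_f(p^\nu)\leq dp^{\nu-1}$ follows from the observation that since $p$ does not divide the content of $f$, the reduction $\bar f\in(\ZZ/p\ZZ)[x]$ is a nonzero polynomial of degree at most $d$, hence has at most $d$ roots modulo $p$. Every solution modulo $p^\nu$ reduces to such a root, and each residue class modulo $p$ has exactly $p^{\nu-1}$ lifts to $\ZZ/p^\nu\ZZ$.

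For the remaining two bounds I would work $p$-adically. Factor $f(x)=c\prod_{i=1}^{d}(x-\alpha_i)$ in $\overline{\QQ_p}[x]$, where $c\in\ZZ_p$ is the leading coefficient and the $\alpha_i$ lie in a finite extension $K/\QQ_p$, and let $w$ denote the unique extension of $v_p$ to $K$. For $x\in\ZZ_p$ one has $v_p(f(x))=v_p(c)+\sum_{i}w(x-\alpha_i)$, so any solution of $f(x)\equiv 0\bmod{p^\nu}$ must, by pigeonhole, be $w$-close to at least one root, in the sense that $w(x-\alpha_i)\geq (\nu-v_p(c))/d$ for some index $i$. The Konyagin--Nagell bound $dp^{(1-1/d)\nu}$ then follows because the number of $x\in\ZZ_p/p^\nu\ZZ_p$ with $w(x-\alpha_i)\geq \nu/d$ is at most $p^{\nu(1-1/d)}$ for a fixed $\alpha_i$, and summing over the $d$ roots yields the claim. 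A minor subtlety arises when $p\mid c$ and some $\alpha_i$ has negative $w$-valuation, but in that case the corresponding ball may simply be empty.

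The discriminant bound $dp^{\mu/2}$ is the most delicate and I expect it to be the main obstacle. It rests on the identity $\disc(f)=c^{2d-2}\prod_{i<j}(\alpha_i-\alpha_j)^2$, which gives $\mu\geq 2\sum_{i<j}w(\alpha_i-\alpha_j)$. The idea is to show that the cluster of solutions $x\in\ZZ/p^\nu\ZZ$ sitting $w$-close to a fixed root $\alpha_i$ has cardinality at most $p^{\mu/2}$: if two distinct such solutions $x_1\ne x_2$ both satisfy $w(x_j-\alpha_i)\geq (\nu-v_p(c))/d$, then the ultrametric inequality together with $f(x_j)\equiv 0\bmod{p^\nu}$ forces a second root $\alpha_j\ne\alpha_i$ to sit close to $\alpha_i$, and the total quantity $\sum_{i<j}w(\alpha_i-\alpha_j)$ caps the diameter of such a cluster by $p^{\mu/2}$. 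Summing over the $d$ roots gives the desired bound. The principal technical difficulty is the uniform treatment of the case $p\mid c$ (where some $\alpha_i$ have negative $w$-valuation and do not lie in $\ZZ_p$); this is standardly resolved either by a preliminary change of variables clearing the leading coefficient, or by working in a $\ZZ_p$-lattice containing all the $\alpha_i$, as in the work of Stewart and of Daniel \cite{D99}.
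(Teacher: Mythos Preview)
The paper's own proof of this lemma consists entirely of citations: the bound $dp^{\mu/2}$ is attributed to Huxley, the bound $dp^{(1-1/d)\nu}$ to Stewart \cite[Corollary~2]{S91}, and the bound $dp^{\nu-1}$ is declared trivial. So your attempt at a self-contained argument is considerably more ambitious than what the paper actually does, and for the purposes of the paper a citation would suffice.

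That said, your sketch is uneven. The trivial bound is correct as written. Your pigeonhole argument for the Stewart-type bound is the standard one and is essentially fine, though note the mismatch between the threshold $(\nu-v_p(c))/d$ you obtain from pigeonhole and the threshold $\nu/d$ you then use when counting residues in a ball; when $p\mid c$ this costs a factor depending on $v_p(c)$, and your remark that the relevant ball ``may simply be empty'' does not by itself recover the clean bound $dp^{(1-1/d)\nu}$. Stewart's argument handles this uniformly, which is why the paper simply cites it.

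The genuine gap is in your Huxley bound. The assertion that the cluster of solutions attached to a fixed root $\alpha_i$ has cardinality at most $p^{\mu/2}$ is exactly the content of the lemma, and your justification does not establish it. The claim that two distinct solutions $x_1\neq x_2$ near $\alpha_i$ ``force a second root $\alpha_j\neq\alpha_i$ to sit close to $\alpha_i$'' is not correct as stated (the existence of nearby roots is a hypothesis coming from the discriminant, not a consequence of having two nearby solutions), and in any case you have not explained how the quantity $\sum_{i<j}w(\alpha_i-\alpha_j)$ converts into a bound on the \emph{number} of residues in the cluster rather than on some notion of diameter. The actual argument requires showing that if $x$ is a solution with $\alpha_i$ its nearest root, then $w(x-\alpha_i)\geq \nu-v_p(c)-\sum_{j\neq i}w(\alpha_i-\alpha_j)$, and then controlling the right-hand side via the discriminant identity; this step is missing from your sketch. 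Since you yourself flag this as ``the main obstacle'' and defer to the literature at the end, the honest route here is simply to cite Huxley as the paper does.
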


\begin{proof} 
The inequality 
$\rho_f(p^{\nu}) \leq  d  p^{\frac{\mu}{2}}$ is 
due to Huxley \cite{hux} and  the inequality
$\rho_f(p^{\nu}) \leq  d  p^{(1-\frac{1}{d})\nu}$ is 
due to Stewart \cite[Corollary 2]{S91}. The final inequality 
is trivial. 
\end{proof}

One of the ingredients in our work will be the Dedekind zeta function
$$
\zeta_{k}(s)=
\sum_{\mathfrak{a}}\frac{1}{N_{k/\QQ}(\mathfrak{a})^s}=
\prod_{\mathfrak{p}}\Big(1-\frac{1}{N_{k/\QQ}(\mathfrak{p})^s}\Big)^{-1},
$$
for $\Re e (s)>1$, when $k$ is a number field obtained by adjoining to
$\QQ$ the root of an irreducible polynomial $f\in \ZZ[x]$.
Here $\mathfrak{a}$ runs over the set of integral ideals in $k$ and
$\mathfrak{p}$ runs over prime ideals.
By a well-known principle due to Dedekind \cite[p. 212]{ded}, for a
rational prime $p\nmid f_0\disc(f)$,  
where $f_0$ denotes the  
leading coefficient of $f$,  
we have the ideal factorisation
$
(p)=\mathfrak{p}_1^{e_1}\mathfrak{p}_2^{e_2}\cdots,
$
with $N_{k/\QQ}(\mathfrak{p}_i)=p^{r_i}$,  corresponding to the factorisation
$$
f(x)\equiv f_1(x)^{e_1}f_2(x)^{e_2}\ldots   \bmod p
$$
for polynomials $f_i(x)$ of degree $r_i$ which are irreducible modulo
$p$.  When $r_i=1$ the polynomial $f_i$ has a root modulo
$p$. Thus, for $p\nmid f_0 \disc(f)$,  
we have  
$$
\rho_f(p)=\#\{\mathfrak{p}: N_{k/\QQ}(\mathfrak{p})=p\}.
$$
The Eulerian factors of $\zeta_k(s)$ which correspond to prime ideals 
$\mathfrak{p}$ for which $N_{k/\QQ}(\mathfrak{p})=p^r$ for $r\geq 2$,
or 
$p\mid f_0 \disc(f)$, define a holomorphic and bounded function 
in the half-plane $\Re e (s)>\frac{1}{2}$, without any zeros there.

We will need to investigate the Dirichlet series 
\begin{equation}\label{defHf}
G_f(s) =\sum_{n=1}^\infty \frac{\rho_f(n)}{n^s}, \quad
G_f(s,\chi) =\sum_{n=1}^\infty \frac{\chi(n)\rho_f(n)}{n^s},
\end{equation}
for $\Re e(s)>1$, where $\chi$ is the real non-principal character
modulo $4$.  
Let $\kappa\in (0,\frac{1}{d})$.  
It follows from Lemma \ref{dan2} that for any  
$p\mid f_0\disc(f)$ we have 
$$
\sum_{\nu\geq 1}\frac{\rho_f(p^\nu)}{p^{\nu(1-\kappa)}}\ll_{\kappa}1.
$$
Hence for all $\kappa\in (0,\frac{1}{d})$ 
there exists an arithmetic function $h$ such that
$$
G_f(s)=\zeta_{k}(s)
\sum_{n=1}^\infty\frac{h(n)}{n^s}=\zeta_{k}(s)H_f(s),
$$
say, with $\sum_{n=1}^\infty |h(n)|n^{-1+\kappa}\ll_\kappa 1$.
In the same manner $G_f(s,\chi)$ is related to the Hecke $L$-function
$$
L(s,\chi)=\sum_{\mathfrak{a}}\frac{\chi(N_{k/\QQ}(\mathfrak{a}))}{N_{k/\QQ}(\mathfrak{a})^s},
$$
defined for $\Re e (s)>1$.  
Note that when $d$ is odd $L(s,\chi) $ will be analytic at $s=1$ since  
$\chi$ is a quadratic character.   
Thus we have $G_f(s,\chi)=L(s,\chi)H_f(s,\chi)$, where
$$
H_f(s,\chi)=\sum_{n=1}^\infty\frac{\chi(n)h(n)}{n^s}.
$$
The following result is well-known and follows on combining the above
with the results contained in the survey
of Heilbronn \cite{H67}.

\begin{lemma}\label{dedekind}  Let $A>0$ and let
$f\in \ZZ[x]$ be an irreducible cubic polynomial with content $1$. 
Then we have
$$
\sum_{n\leq X}\frac{\chi
(n)\rho_f(n)}{n}=\vartheta(f;\chi)+O_{A}\big((\log X)^{-A}\big),
$$
with $\vartheta(f;\chi)=L(1,\chi)H_f(1,\chi)$. Furthermore, we have 
$$
\sum_{p\leq X}\frac{\chi (p)\rho_f(p)}{p} \ll  1.
$$
\end{lemma}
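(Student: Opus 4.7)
The approach is a standard Dirichlet-series / contour-shift argument built on the factorisation $G_f(s,\chi)=L(s,\chi)H_f(s,\chi)$ already established in the excerpt. First, I would collect the analytic input for $L(s,\chi)$: since $d=3$ is odd, the excerpt already notes that $L(s,\chi)$ is analytic at $s=1$; by the material assembled in Heilbronn's survey \cite{H67} this Hecke $L$-function is in fact entire, satisfies a functional equation with polynomial growth in $|t|$ on vertical strips (via convexity), admits a classical de la Vall\'ee Poussin zero-free region $\RE(s)\geq 1-c/\log(|t|+2)$, and is non-vanishing at $s=1$. The factor $H_f(s,\chi)$ is an absolutely convergent Dirichlet series on $\RE(s)>1-\kappa$ for every $\kappa<\tfrac{1}{3}$, hence holomorphic and bounded (and, by standard Euler-product considerations, non-vanishing) in a neighbourhood of $s=1$.

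For the first assertion I would apply truncated Perron's formula to the Dirichlet series $G_f(s+1,\chi)/s$, using the bound $\rho_f(n)\ll n^\varepsilon$ (from Lemma \ref{dan2}) to control the tail and truncation errors. Shifting the contour from $\RE(s)=1/\log X$ into the zero-free region to the left of $\RE(s)=0$, the simple pole at $s=0$ contributes the residue $G_f(1,\chi)=L(1,\chi)H_f(1,\chi)=\vartheta(f;\chi)$, while the shifted contour is estimated by combining the polynomial growth of $L(s,\chi)$ on vertical lines with the zero-free region. This produces an error of de la Vall\'ee Poussin shape $\exp(-c\sqrt{\log X})$, comfortably $\ll_A (\log X)^{-A}$ for every $A>0$.

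For the second assertion, a parallel Perron argument applied to
$$-\frac{G_f'(s,\chi)}{G_f(s,\chi)}=-\frac{L'(s,\chi)}{L(s,\chi)}-\frac{H_f'(s,\chi)}{H_f(s,\chi)},$$
whose $n$-th Dirichlet coefficient $\Lambda_g(n)$ is supported on prime powers, yields the Chebyshev-type estimate $\sum_{n\leq X}\Lambda_g(n)\ll X\exp(-c\sqrt{\log X})$, the absence of a leading term being guaranteed by the non-vanishing $G_f(1,\chi)\neq 0$. Isolating the primes---for which $\Lambda_g(p)=\chi(p)\rho_f(p)\log p$, with the contribution of higher prime powers bounded via Lemma \ref{dan2}---and applying partial summation then delivers $\sum_{p\leq X}\chi(p)\rho_f(p)/p\ll 1$.

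The principal subtlety throughout is controlling the polynomial growth of $L(s,\chi)$ on vertical lines well enough inside the zero-free region to push the error past $(\log X)^{-A}$ for every $A$; but this is the familiar de la Vall\'ee Poussin analysis for Hecke $L$-functions and is routine once the analytic properties collected from Heilbronn's survey are in hand. The real work therefore amounts to carefully assembling standard facts rather than to proving anything genuinely new.
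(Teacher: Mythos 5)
Your treatment of the first assertion is sound and is essentially what the authors intend: the paper offers no proof beyond "combine the factorisation $G_f(s,\chi)=L(s,\chi)H_f(s,\chi)$ with Heilbronn's survey", and your Perron-plus-contour-shift argument (or, equivalently, an application of the paper's own Lemma~\ref{convol} with $g$ the coefficients of the Hecke $L$-function and $h$ those of $H_f$) fills this in correctly. Note that for this part you never divide by $L(s,\chi)$, so the zero-free region is not actually needed; holomorphy of $L(s,\chi)$ at $s=1$, polynomial growth on vertical strips, and absolute convergence of $H_f(s,\chi)$ for $\Re e(s)>\frac{2}{3}$ already let you shift to a fixed abscissa left of $1$ and win a power of $X$ (with a little care in balancing the Perron truncation parameter against the convexity exponent).

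The second assertion is where there is a genuine gap. Your route via $-G_f'/G_f$ requires $G_f(s,\chi)=L(s,\chi)H_f(s,\chi)$ to be \emph{non-vanishing} throughout the region into which you shift the contour, and you dispose of the zeros of $H_f(s,\chi)$ with the phrase "by standard Euler-product considerations". This is not justified. The local factors $G_{f,p}(s,\chi)$ for the finitely many bad primes $p\mid f_0\disc(f)$ are genuine power series in $p^{-s}$ whose coefficients are only controlled by the upper bounds of Lemma~\ref{dan2} (which involve the possibly large quantity $p^{\mu/2}$); nothing prevents such a factor from vanishing at points with $\Re e(s)$ arbitrarily close to $1$, and each such zero would be a pole of $-G_f'/G_f$ contributing an untamed term $-X^{\rho}/\rho$ to your Chebyshev sum. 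It is telling that the lemma does not even assert $\vartheta(f;\chi)\neq 0$. The fix is to bypass $G_f$ entirely: for $p\nmid f_0\disc(f)$ one has $\chi(p)\rho_f(p)=\sum_{N_{k/\QQ}(\fp)=p}\chi(N_{k/\QQ}(\fp))$, so that
$$
\sum_{p\leq X}\frac{\chi(p)\rho_f(p)}{p}
=\sum_{N_{k/\QQ}(\fp)\leq X}\frac{\chi(N_{k/\QQ}(\fp))}{N_{k/\QQ}(\fp)}+O(1),
$$
the error absorbing the bad primes and the degree $\geq 2$ prime ideals. The right-hand sum is bounded by Mertens' theorem for the non-trivial real Hecke character $\chi\circ N_{k/\QQ}$, which needs only the standard non-vanishing and zero-free region for $L(s,\chi)$ itself (a possible exceptional zero is harmless here since all implied constants may depend on $f$). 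No information about zeros of $H_f$ is required.
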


In the present investigation we will be concerned with the case
$f(x)=C(x,1)$, an irreducible polynomial of degree $d=3$ 
defined over $\ZZ$. 
We will need to relate the series
\begin{equation}
  \label{eq:defH}
D(s)=\sum_{n=1}^\infty \frac{\chi(n)\rho(1,n)}{n^{1+s}}
\end{equation}
to $H_{C(x,1)}(s)$, where $\rho(d_1,d_2)$ is given by
\eqref{defrho}. To this end it will be necessary to have some 
further information about the size of  $\rho(d_1,d_2)$ at prime powers. 
We will suppose once and for all that 
\begin{equation}
  \label{eq:LC}
L (\x)=a x_1+b x_2,\quad
C(\x) =c_0x_1^3+c_1x_1^2x_2 +c_2x_1 x_2^2+c_3x_2^3,
\end{equation}
for $a,b,c_i\in \ZZ$, with non-zero integers
\begin{equation}\label{defDelta}
\Delta=|\Res (L,C)|, \quad \Delta'=|\disc(C)|.
\end{equation}
Our investigation is summarised in the
following result.

\begin{lemma}\label{rhopremier}
Let $C\in \ZZ[\x]$ be an irreducible cubic form and let $L\in
\ZZ[\x]$ be a non-zero linear form. 
Assume that $L,C $ are primitive
 and let $\Delta$, $\Delta'$
be as in \eqref{defDelta}. Then we have the following expressions.
\begin{enumerate}
\item
When $p\nmid  c_0 \Delta'$  and $\nu\in\NN$ then  we have
$$
\rho (1,p^\nu)= 
\begin{cases}
p^{\nu-1}(p^{[\frac{\nu}{3}]}-1)\rho_{C(x,1)}(p)+p^{\nu+[\frac{\nu}{3}]}
&\mbox{if  $\nu\equiv 0\bmod 3$},\\  
p^{\nu-1
}(p^{[\frac{\nu}{3}]+1}-1)\rho_{C(x,1)}(p)+p^{\nu+[\frac{\nu}{3}]-1}
&\mbox{if  $\nu\equiv 1\bmod 3$}, 
\\ 
p^{\nu-1
}(p^{[\frac{\nu}{3}]+1}-1)\rho_{C(x,1)}(p)+p^{\nu+[\frac{\nu}{3}]}&\mbox{if
  $\nu\equiv 2\bmod 3$}. 
\end{cases}
$$
In particular, when $p\nmid  c_0 \Delta'$ we have 
$$
\rho (1,p)=(p-1)\rho_{C(x,1)}(p)+1.
$$
For any prime $p$ and $\nu\in\NN$, we have  
$$
\rho(1,p^{\nu }) \ll \min\{p^{  2\nu -1},p^{ \frac{4\nu}{3}}\}.
$$
\item 
When $\nu_2\leq 3\nu_1$ and $p\nmid \Delta $, we have
$$
\rho(p^{\nu_1},p^{\nu_2}) 
 \leq p^{ \nu_1+2\nu_2-\lceil\frac{\nu_2}{3}\rceil} .
$$
When $0\leq 3\nu_1<\nu_2$ and $p\nmid c_0\Delta\Delta'$,
we have
$$
\rho(p^{\nu_1},p^{\nu_2}) 
\leq \Big(3+\frac{1}{p}\Big) p^{ 2\nu_1+ \nu_2+[\frac{\nu_2}{3}]}.
$$
\item 
For any prime $p$ and $\nu_1,\nu_2 \in\ZZ_{\geq 0}$ we have
$$
\rho(p^{\nu_1},p^{\nu_2}) \ll \min\{
p^{\nu_1+2\nu_2},p^{2\nu_1+2\nu_2-1},p^{2\nu_1+\frac{4\nu_2}{3}}\}.
$$
\end{enumerate}
\end{lemma}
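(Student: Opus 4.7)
The plan is to handle the three parts in order, since Part~(i) furnishes a cubic recursion that drives the bounds in Parts~(ii) and~(iii).

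For Part~(i), I would count $N(\nu):=\rho(1,p^\nu)$ by splitting on whether $p\mid x_2$. When $p\nmid x_2$, the homogeneity $C(x_1,x_2)=x_2^3 f(x_1 x_2^{-1})$ with $f(x)=C(x,1)$, together with the hypothesis $p\nmid c_0\Delta'$, places us in the scope of Hensel's lemma, so that $\rho_f(p^\nu)=\rho_f(p)$ and the contribution is $\phi(p^\nu)\rho_f(p)=p^{\nu-1}(p-1)\rho_f(p)$. When $p\mid x_2$, reduction modulo $p$ forces $c_0 x_1^3\equiv 0$ and hence $p\mid x_1$; writing $x_i=py_i$ and using $C(py_1,py_2)=p^3 C(y_1,y_2)$ yields the recursion
\begin{equation*}
N(\nu)=p^{\nu-1}(p-1)\rho_f(p)+p^4 N(\nu-3)\qquad(\nu\geq 3),
\end{equation*}
with base cases $N(0)=1$, $N(1)=(p-1)\rho_f(p)+1$, and $N(2)=p(p-1)\rho_f(p)+p^2$ computed directly. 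A short induction on $\nu$ treating the three residues modulo $3$ separately then delivers the explicit formulas. For a general prime the same case split together with the Huxley and Stewart bounds of Lemma~\ref{dan2} (applied after extracting the $p$-content of $C(\cdot,x_2)$) yields the global upper bound $\rho(1,p^\nu)\ll\min\{p^{2\nu-1},p^{4\nu/3}\}$.

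For Part~(ii), the hypothesis $p\nmid\Delta=|\Res(L,C)|$ guarantees that $L$ and $C$ have no common zero modulo $p$, so the locus $p^{\nu_1}\mid L(\x)$ can be parameterized cleanly: assuming without loss of generality $p\nmid a$, every admissible $\x$ has the form $x_1=x_1^*+p^{\nu_1}s$ with $x_1^*=-a^{-1}bx_2$ and $s\in\ZZ/p^{\nu_2}\ZZ$, giving $p^{\nu_1+2\nu_2}$ candidates in the box. Since $\partial_1^k C=0$ for $k\geq 4$, a Taylor expansion in $s$ gives the exact identity
\begin{equation*}
C(\x)=-(x_2/a)^3 C(b,-a)+p^{\nu_1}s\,g_1(x_2)+p^{2\nu_1}s^2 g_2(x_2)+p^{3\nu_1}s^3 c_0,
\end{equation*}
in which $C(b,-a)=\pm\Res(L,C)$ is a $p$-adic unit. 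In the regime $\nu_2\leq 3\nu_1$ the cubic term is negligible modulo $p^{\nu_2}$ and the leading term forces $p^{\lceil\nu_2/3\rceil}\mid x_2$, yielding at most $p^{\nu_1+2\nu_2-\lceil\nu_2/3\rceil}$ solutions. In the regime $3\nu_1<\nu_2$, the cubic term contributes non-trivially and, after dividing through by the appropriate power of $p$, the problem reduces to a count of the Part~(i) type with exponent $\nu_2-3\nu_1$; combining the resulting bound with the $p^{2\nu_1}$ book-keeping factor from the parameterization delivers the stated estimate $(3+1/p)p^{2\nu_1+\nu_2+[\nu_2/3]}$.

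Part~(iii) is then an interpolation valid for every prime: the bound $p^{\nu_1+2\nu_2}$ is the trivial count from the $L$-parameterization; the bound $p^{2\nu_1+2\nu_2-1}$ follows from this count when $\nu_1\geq 1$, and from Part~(i) when $\nu_1=0$; and the bound $p^{2\nu_1+4\nu_2/3}$ is obtained by fixing $x_2$, extracting the $p$-content of $C(\cdot,x_2)$, and applying the Stewart part of Lemma~\ref{dan2} to count admissible $x_1$ in the residue class determined by the $L$-constraint. The main obstacle throughout is the $p$-adic book-keeping in Part~(ii): one must track the interaction of all four Taylor terms in reaching valuation $\nu_2$, with a case analysis on $v_p(x_2)$ relative to $\nu_1$, and squeeze out the precise ceiling $\lceil\nu_2/3\rceil$ that reflects the cubic structure of $C$.
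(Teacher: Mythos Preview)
Your Part~(i) recursion is essentially the same as the paper's: the paper introduces the auxiliary count $\rho^*(p^{\nu_1},p^{\nu_2})$ of pairs with $p\nmid\mathbf{x}$ and expresses $\rho$ as a finite sum of $\rho^*$ values at shifted exponents (its identity~\eqref{eq:r1}), which is just your recursion $N(\nu)=\phi(p^\nu)\rho_f(p)+p^4N(\nu-3)$ unfolded. Part~(iii) is likewise handled by both of you via the trivial inequalities $\rho(p^{\nu_1},p^{\nu_2})\leq p^{2\nu_1}\rho(1,p^{\nu_2})$ and $\rho(p^{\nu_1},p^{\nu_2})\leq p^{2\nu_2}\rho(p^{\nu_1},1)$ combined with Part~(i).

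Part~(ii) is where you diverge from the paper, and where your argument has a real gap. In the sub-regime $\nu_1<\nu_2\leq 3\nu_1$ the linear and quadratic Taylor terms $p^{\nu_1}s\,g_1(x_2)$ and $p^{2\nu_1}s^2 g_2(x_2)$ are \emph{not} negligible modulo $p^{\nu_2}$, so your claim that ``the leading term forces $p^{\lceil\nu_2/3\rceil}\mid x_2$'' is unjustified: for suitable $s$ these terms could in principle cancel part of the constant term and lower the required valuation of $x_2$. Extracting the precise ceiling from the interaction of all four terms is exactly the obstacle you flag in your closing paragraph, and you have not actually carried it out.

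The paper sidesteps this Taylor-expansion bookkeeping entirely. Its key observation is that for primitive $\mathbf{x}$ one has $\rho^*(p^{\nu_1},p^{\nu_2})=0$ whenever $\min\{\nu_1,\nu_2\}>v_p(\Delta)=0$, since $p\nmid\Delta$ prevents $p$ from dividing both $L(\mathbf{x})$ and $C(\mathbf{x})$. Plugging this into the decomposition~\eqref{eq:r1} collapses the sum: when $\nu_2\leq 3\nu_1$ only the terms with $k\geq\lceil\nu_2/3\rceil$ survive, each contributing $p^{2\nu_2}\rho^*(p^{\nu_1-k},1)=p^{2\nu_2}\phi(p^{\nu_1-k})$, and summing a geometric series gives $\rho(p^{\nu_1},p^{\nu_2})\leq p^{\nu_1+2\nu_2-\lceil\nu_2/3\rceil}$ in one line. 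The same identity handles the regime $3\nu_1<\nu_2$ just as directly, using $\rho^*(1,p^\nu)\leq 3\phi(p^\nu)$. So the $\rho^*$ stratification by $p$-content of $\mathbf{x}$ --- rather than your parameterisation of the $L$-locus and Taylor expansion in the transverse variable --- is what makes Part~(ii) clean.
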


\begin{proof} 
These expressions are founded on a preliminary study of the related quantity 
\begin{equation}
  \label{eq:star}
\rho^*(p^{\nu_1},p^{\nu_2})
=\# 
\{ \x \in \ZZ^2\cap [0,p^{\nu_1+\nu_2})^2 : p^{\nu_1} \mid L (\x), ~
p^{\nu_2}\mid C(\x), ~p\nmid \x\}.
\end{equation}
We will follow the convention that $\rho^*(1,1)=1$. 
We can relate this quantity to $\rho(p^{\nu_1},p^{\nu_2})$ via the
easily checked identity
\begin{equation}
  \label{eq:r1}
\rho(p^{\nu_1},p^{\nu_2})
 = \sum_{0\leq k\leq \max\{\nu_1,\lceil
 \frac{\nu_2}{3}\rceil \}} 
 \rho^*\big(p^{\max\{\nu_1-k,0\}}, p^{\max\{\nu_2-3k,0\}}\big)
 p^{m_k},  
\end{equation}
with $m_k=2(\min\{\nu_1,k\}+ \min\{\nu_2,3k\}-k)$.
This follows on partitioning the $\x$ to be counted according to the
common $p$-adic order of $x_1,x_2$ and $p^{\max\{\nu_1,\lceil
 \frac{\nu_2}{3}\rceil \}}$.

Proceeding with our  analysis of $\rho^*(p^{\nu_1},p^{\nu_2})$, we
begin by noting that 
\begin{equation}
  \label{eq:sun}
\rho^*(1,p^\nu)=\varphi(p^\nu) \rho_{C(x,1)}(p^\nu)
\end{equation}
if $p\nmid  c_0$,
since the solutions $\x$ to be counted satisfy
$p\nmid x_2$ for $p\nmid c_0$. 
Hence Lemma \ref{dan2} yields
$
\rho^*(1,p^\nu)\leq 3\varphi(p^\nu)
$
if $p\nmid  c_0 \Delta'$. 
Suppose now that $p\mid c_0\Delta'$. 
If $\x$ is counted
by $\rho^*(1,p^\nu)$ then $\xi\leq v_p(c_0)$ if $p^{\xi}\| x_2$.
We may conclude from Lemma \ref{dan2} that 
\begin{equation}
  \label{eq:moon}
\rho^*(1,p^\nu)\leq \sum_{0\leq \xi \leq v_p(c_0)}
\phi(p^{\nu-\xi})\cdot p^\xi \rho_{p^{-\xi}C(x,p^\xi)}(p^{\nu-\xi})  
\ll p^\nu,
\end{equation}
where we recall our convention that the implied constants are allowed
to depend on the coefficients of $L,C$. 
This latter estimate holds for any prime $p$.
Next we note that 
$$
\rho^*(p^{\nu_1},p^{\nu_2})\leq \min\{p^{2\nu_2} 
\rho^*(p^{\nu_1},1),p^{2\nu_1} 
\rho^*(1,p^{\nu_2})\}. 
$$
Since $\rho^*(p^{\nu_1},p^{\nu_2})=0$
when $\min\{\nu_1,\nu_2\}>v_p(\Delta )$, and 
$\rho^*(p^{\nu_1},1)=\phi(p^{\nu_1})$,
it therefore follows from \eqref{eq:moon} that
\begin{equation}
  \label{eq:linden}
  \rho^*(p^{\nu_1},p^{\nu_2})\ll p^{\nu_1+\nu_2}.
\end{equation}

We are now ready to deduce the statement of Lemma \ref{rhopremier}.
When $p \nmid \Delta'$ and $\nu\geq 1$ it follows from Hensel's lemma
that $\rho_{C(x,1)}(p^\nu)=\rho_{C(x,1)}(p)$. 
The first pair of displayed relations in part (1) now follow 
directly from \eqref{eq:r1} and \eqref{eq:sun}.
The final part is again based on \eqref{eq:r1}, but now combined
with \eqref{eq:moon}.

Turning to the proof of 
part (2), for which we call upon \eqref{eq:r1}, we see
that when $\nu_2\leq 3\nu_1$
and $p\nmid \Delta$ we have 
\begin{align*}
\rho(p^{\nu_1},p^{\nu_2})=
\sum_{\lceil\frac{\nu_2}{3}\rceil\leq k\leq \nu_1}p^{2\nu_2}\rho^*(p^{\nu_1-k},1)
&=p^{2\nu_2}\sum_{\lceil\frac{\nu_2}{3}\rceil\leq k\leq
  \nu_1}\phi(p^{\nu_1-k})
\leq
p^{ \nu_1+2\nu_2-\lceil\frac{\nu_2}{3}\rceil} .
\end{align*} 
When  $3\nu_1<\nu_2$ and $p\nmid
c_0\Delta\Delta'$ we have 
\begin{align*}
\rho(p^{\nu_1},p^{\nu_2})&=
\sum_{\nu_1\leq k\leq [\frac{\nu_2}{3}]}p^{2\nu_1+4k}\rho^*(1,p^{\nu_2-3k})
+\Big(\Big\lceil\frac{\nu_2}{3}\Big\rceil-\Big[\frac{\nu_2}{3}\Big]\Big)p^{2\nu_1+2\nu_2-2\lceil\frac{\nu_2}{3}\rceil}
\cr
&\leq 3
p^{ 2\nu_1+ \nu_2+[\frac{\nu_2}{3}]}     
+\Big(\Big\lceil\frac{\nu_2}{3}\Big\rceil- 
\Big[\frac{\nu_2}{3}\Big]\Big)p^{2\nu_1+2\nu_2-2\lceil\frac{\nu_2}{3}\rceil}  
\cr
&\leq \Big(3+\frac{1}{p}\Big) p^{ 2\nu_1+ \nu_2+[\frac{\nu_2}{3}]}.
\end{align*} 
Finally part (3) is a consequence of the inequalities
$$
\rho(p^{\nu_1},p^{\nu_2})\leq p^{2\nu_2}\rho( p^{\nu_1},1)=p^{\nu_1+2\nu_2},\qquad
\rho(p^{\nu_1},p^{\nu_2})\leq p^{2\nu_1}\rho(1,p^{\nu_2}),
$$
together with part (1) of the lemma.
\end{proof}

In general the forms $L,C$ need not be primitive. 
We let $\ell_1,\ell_2 \in \NN$ and $L^*, C^*  $ be primitive forms
such that
$$
L =\ell_1L^*,\quad 
C=\ell_2C^*.
$$
One can easily restrict attention to primitive forms in Lemma
\ref{rhopremier} via the trivial observation that 
\begin{equation}
  \label{eq:mars}
  \frac{\rho(\bd;L,C)}{(d_1 d_2)^2}
=\frac{\rho(\bd';L^*,C^*)}{(d_1' d_2')^2} ,
\end{equation}
for any $\bd\in\NN^2$, where $d_i'= \gcd(d_i,\ell_i)^{-1}d_i$.

Returning to 
the Dirichlet series $D(s)$ defined in \eqref{eq:defH}, we write
\begin{equation}\label{defG}
D(s)=
G_{C(x,1)}(s,\chi)A(s),
\end{equation}
where $G_{C(x,1)}(s,\chi)$ is given by \eqref{defHf} and $A(s)$ is
the Dirichlet series associated to an appropriate arithmetic
function $a$. We will need the following result.

\begin{lemma}\label{H} For any $\ve>0$ and $\sigma\geq
  \frac{5}{6}+\ve$ we have
$
\sum_{n=1}^\infty |a(n)|n^{-\sigma}\ll 1.
$
\end{lemma}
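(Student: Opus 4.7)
The plan is to analyse the Dirichlet convolution decomposition $a = b * g^{-1}$, where $b(n) := \chi(n) \rho(1, n)/n$ encodes $D(s)$ and $g(n) := \chi(n) \rho_{C(x,1)}(n)$ encodes $G_{C(x,1)}(s, \chi)$. Since both $\rho(1, \cdot)$ and $\rho_{C(x,1)}$ are multiplicative (the former by CRT applied to the definition \eqref{defrho}), so is $a$, and the target sum factors as an Euler product. It therefore suffices to show that $\prod_p \bigl(1 + \sum_{\nu \geq 1}|a(p^\nu)|/p^{\nu \sigma}\bigr)$ is bounded for $\sigma \geq 5/6 + \ve$.

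The dominant contribution comes from the good primes $p \nmid c_0 \Delta'$, where Hensel's lemma gives $\rho_{C(x,1)}(p^\nu) = \rho_C := \rho_{C(x,1)}(p) \in \{0, 1, 2, 3\}$ for every $\nu \geq 1$. Summing the geometric series in $v = \chi(p)/p^s$ one obtains $G_p(s) = (1 + (\rho_C - 1)v)/(1 - v)$, and expanding the reciprocal yields $g^{-1}(p^\nu) = -\rho_C (1 - \rho_C)^{\nu - 1} \chi(p)^\nu$ for $\nu \geq 1$; in particular $|g^{-1}(p^\nu)| \leq 3 \cdot 2^{\nu - 1}$. The key cancellation occurs at $\nu = 1$: the identity $b = g * a$ reads $a(p) = b(p) - g(p)$, and substituting the explicit value $\rho(1, p) = (p - 1) \rho_C + 1$ from Lemma \ref{rhopremier}(1) gives $a(p) = \chi(p)(1 - \rho_C)/p$, whence $|a(p)| \leq 2/p$.

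For $\nu \geq 2$ I would invoke the third inequality of Lemma \ref{rhopremier}(1), namely $\rho(1, p^\nu) \ll p^{4\nu/3}$, to obtain $|b(p^\nu)| \ll p^{\nu/3}$. Combining with the bound on $g^{-1}$, the convolution formula gives
\[
|a(p^\nu)| \leq \sum_{k=0}^\nu |b(p^k)|\, |g^{-1}(p^{\nu - k})| \ll p^{\nu/3}
\]
for all sufficiently large $p$, the geometric sum being dominated by the $k = \nu$ term. The finitely many bad primes $p \mid c_0 \Delta'$ each produce a finite local factor (the bound $|b(p^\nu)| \ll p^{\nu/3}$ still holds, and $G_p(s)^{-1}$ at a fixed prime has geometrically decaying Dirichlet coefficients since $\rho_{C(x,1)}(p^\nu)\ll p^{2\nu/3}$ by Lemma \ref{dan2}), so their combined contribution is $O_{\ve, L, C}(1)$.

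Putting everything together, each good-prime local factor is bounded by $1 + O(p^{-1-\sigma}) + \sum_{\nu \geq 2} O(p^{\nu(1/3 - \sigma)})$, whose geometric tail is controlled by the $\nu = 2$ term of size $p^{2/3 - 2\sigma}$. Taking logarithms and summing over $p$, convergence of the Euler product reduces to that of $\sum_p p^{2/3 - 2\sigma}$, which holds precisely when $\sigma > 5/6$; at $\sigma \geq 5/6 + \ve$ the resulting bound depends only on $\ve$, $L$ and $C$. The main obstacle is recognising the $\nu = 1$ cancellation: the naive estimate $|b(p)| \asymp \rho_C$ inserted directly into the product would force the divergent sum $\sum_p p^{-\sigma}$ upon us for $\sigma \leq 1$, and it is precisely the division by $G_{C(x,1)}(s, \chi)$ that extracts the $1/p$ correction making the whole argument viable. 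Once that cancellation is in place, the threshold $5/6$ is set entirely by the coarse $\nu = 2$ contribution.
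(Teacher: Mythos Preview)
Your proposal is correct and follows essentially the same approach as the paper: both arguments work prime by prime, use Lemma~\ref{rhopremier}(1) to see that the $\nu=1$ contributions of $D_p$ and $G_{p,C(x,1)}(\cdot,\chi)$ agree up to $O(p^{-1-\sigma})$, and bound the $\nu\ge 2$ tail by $\rho(1,p^\nu)\ll p^{4\nu/3}$ to obtain the $p^{2/3-2\sigma}$ term that fixes the threshold $\sigma>5/6$. The only difference is presentational---the paper compares the local Euler factors directly as $D_p(s)=G_{p,C(x,1)}(s,\chi)\bigl(1+O(p^{-2\sigma+2/3}+p^{-1-\sigma})\bigr)$, whereas you make the same estimate explicit at the level of the convolution $a=b*g^{-1}$ and bound $|a(p^\nu)|$ coefficient-wise; your version is slightly more careful in that it genuinely controls $\sum_\nu|a(p^\nu)|p^{-\nu\sigma}$ rather than just $|A_p(\sigma)-1|$.
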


\begin{proof}
Since the two functions involved are multiplicative 
it suffices to analyse the Euler products
$$
D(s)=\prod_pD_p(s),\quad
G_{C(x,1)}(s,\chi)=\prod_p G_{p,C(x,1)}(s,\chi).
$$
Suppose that $\Re e
(s)=\sigma>\frac{2}{3}$.  
When $p\nmid c_0\Delta'$, 
Lemma \ref{dan2} and part (1) of Lemma \ref{rhopremier} yield 
\begin{align*}
D_p(s)&=1+\frac{\chi(p)\rho_{C(x,1)}(
p)}{p^{s}}+O\big(p^{-2\sigma+\frac{2}{3}}+p^{-1-\sigma}\big)
\cr
&=G_{p,C(x,1)}(s,\chi)\Big(1+O\big(p^{-2\sigma+\frac{2}{3}}+p^{-1-\sigma}+p^{-2\sigma}
\big)\Big).
\end{align*}
When $p\mid c_0\Delta'$, we have
$$
D_p(s)=1+  O\big(p^{\frac{2}{3}- \sigma}\big)
,\quad G_{p,C(x,1)}(s,\chi)=1+  O\big(p^{\frac{2}{3} - \sigma}\big).
$$

From this we deduce that \eqref{defG} holds with the Dirichlet
series $A$ associated to a function $a$ satisfying the bound recorded in
the lemma.
\end{proof}

We close this section with a simple result 
concerning the estimation of summatory functions that involve the
convolution of arithmetic functions.  

 \begin{lemma}\label{convol} Let $ A>0$. Let $g, h$ be arithmetic
functions and $C,C',C''$ constants such that
$$
\sum_{d=1}^\infty \frac{|h(d)|(\log 2d)^A}{ d}\leq C'',\quad 
\sum_{d\leq x} \frac{g(d)}{ d}=C +O\Big(\frac{ C' }{(\log
  2x)^{A}}\Big).
$$ 
Then we have 
$$
\sum_{n\leq x}\frac{(g*h)(n)}{n}=C \sum_{d=1}^\infty \frac{ 
h(d)}{ d}+O\Big(\frac{C''(C+C')}{(\log 2x)^{A}}\Big).
$$
\end{lemma}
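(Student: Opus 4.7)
The plan is to expand the convolution and interchange the order of summation. Writing $(g*h)(n) = \sum_{de=n} g(d)h(e)$ yields
$$\sum_{n \leq x} \frac{(g*h)(n)}{n} = \sum_{de \leq x} \frac{g(d)h(e)}{de} = \sum_{e \leq x} \frac{h(e)}{e} \sum_{d \leq x/e} \frac{g(d)}{d},$$
and the hypothesis on $g$ converts the inner sum into $C + O(C'/(\log(2x/e))^A)$. This error only saves a logarithmic factor when $e$ is not too close to $x$, so I would split the outer range at $y = \sqrt{x}$; any comparable cut-off such as $x^\theta$ with $\theta\in(0,1)$ would serve equally well.

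For $e \leq \sqrt{x}$ one has $\log(2x/e) \geq \tfrac{1}{2}\log(2x)$, and the inner error therefore contributes
$$\ll \frac{C'}{(\log 2x)^A}\sum_{e \leq \sqrt{x}} \frac{|h(e)|}{e} \ll \frac{C'C''}{(\log 2x)^A},$$
the absolute convergence of $\sum_{e} |h(e)|/e$ being a consequence of the first hypothesis after noting that $(\log 2e)^A \geq (\log 2)^A$ for every $e \geq 1$. The corresponding main term is $C\sum_{e \leq \sqrt{x}} h(e)/e$, which I would extend to the full series $C\sum_{e=1}^\infty h(e)/e$ at the cost of
$$\left|\sum_{e > \sqrt{x}} \frac{h(e)}{e}\right| \leq \frac{1}{(\log 2\sqrt{x})^A}\sum_{e > \sqrt{x}} \frac{|h(e)|(\log 2e)^A}{e} \ll \frac{C''}{(\log 2x)^A},$$
producing an additional error of order $CC''/(\log 2x)^A$.

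For $e > \sqrt{x}$ I would abandon the asymptotic on the inner sum and use the crude bound $|\sum_{d\leq x/e} g(d)/d| \ll C + C'$, which comes from the $g$-hypothesis together with $\log 2y \geq \log 2$ for all $y \geq 1$. Combined with the tail estimate for $\sum h(e)/e$ displayed above, this range contributes $O((C+C')C''/(\log 2x)^A)$. Assembling the three pieces yields the claimed formula. No step presents any real obstacle; the only subtlety is ensuring that the splitting parameter is chosen so that both $\log(2x/e)$ and $\log 2e$ remain of order $\log 2x$ on their respective sides, which the choice $y = \sqrt{x}$ guarantees.
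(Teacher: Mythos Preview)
Your proof is correct and follows essentially the same approach as the paper: expand the convolution, split the outer sum at $\sqrt{x}$, use the asymptotic for $g$ on the small range, and control the tail via the weighted $h$-hypothesis. The paper's write-up is terser but the underlying decomposition and estimates are identical.
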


\begin{proof}
We clearly have
$$
\sum_{n\leq x}\frac{(g*h)(n)}{n}=\sum_{d\leq x} \frac{ 
h(d)}{ d}\sum_{m\leq \frac{x}{d}} \frac{g(m)}{ m}.
$$
We approximate the inner sum over $m$ by $C$ if $d\leq \sqrt{x}$. On
noting that
$$ \sum_{d>\sqrt{x}} \frac{| h(d)|  }{ d}
\leq \sum_{d=1}^\infty \frac{|h(d)|}{ d}\frac{(\log 2d)^A}{(\log
2\sqrt{x})^{A}}\ll
\frac{C''}{ (\log 2x)^{A}},
$$
we are easily led to the conclusion of the lemma.
\end{proof}

\section{Preliminary steps}

In this section we shall begin the proof of Theorem \ref{mainS}. 
Recall the notation \eqref{eq:LC} and \eqref{defDelta} concerning $L,C$.
We will find it convenient to estimate the corresponding sum
$
S_0(X)$, say, in which we insist that the greatest common divisor of
$x_1,x_2$ is 
odd. Note that $r(2n)=r(n)$ for any positive integer $n$. 
We may therefore write 
$$
S(X)=\sum_{k_0\geq 0} \sum_{\substack{\x\in\ZZ^2\cap X\mcal{R}\\ 2^{k_0}
    \| \x}} 
r(L(\x)) r(C(\x))=
\sum_{k_0\geq 0} S_0(2^{-k_0}X).
$$
We will also need to extract $2$-adic factors from 
$L(\x)$ and $C(\x)$. Thus we have
$$
S(X)=\sum_{k_0\geq 0}\sum_{\ma{k}=(k_1,k_2)\in\ZZ_{\geq
0}^2}S_{\ma{k}}(2^{-k_0}X),
$$
where $S_{\ma{k}}(X)$ is the restriction of $S(X)$ to $\x$ for which
$2^{-k_1}L(\x)\equiv 1\bmod 4$ and  $2^{-k_2}C(\x)\equiv 1\bmod 4$,
with $2\nmid \x$. 
In particular it is clear that $k_1,k_2\ll \log X$ 
and $\min \{k_1,k_2\}\leq v_2(\Delta)$  
in order for 
$S_{\ma{k}}(2^{-k_0}X)$ to be non-zero.
We will need to show that the available range for $k_1,k_2$ can be
reduced with an acceptable error. A straightforward application of
\cite[Corollary 1]{nair} yields 
$$
S_{\ma{k}}(X)\ll
2^{\ve(k_1+k_2)}(2^{-\max\{k_1,k_2\}}X^2+X^{1+\ve}),
$$ 
for any
$\ve>0$. It follows that
\begin{equation}\label{SXsuma}
S(X)=\sum_{k_0\geq 0}\sum_{0\leq k_1,k_2\leq \log\log X}
S_{\ma{k}}(2^{-k_0}X) +O\big(X^2(\log X)^{-(1-\ve)\log 2}\big).
\end{equation}

The condition $2^{-k_1}L(\x)\equiv 1\bmod 4$ is easy to
analyse. Without loss of generality we may assume that $a$ is odd. Let
$0\leq c<2^{k_1+2}$ be such that $ac\equiv -b\bmod {2^{k_1+2}}$ and
$c'\in\{-1,1\}$ such that $c'\equiv a\bmod 4$. Then we see that 
$2^{-k_1}L(\x)\equiv 1\bmod 4$ is equivalent to the
existence of $x_1'\equiv 1\bmod 4$ such that
$$
x_1=cx_2+c'2^{k_1}x_1'.
$$ 
If $k_1\geq 1$, the condition that $2\nmid \x$ reduces to the
condition that  $x_2$ should be odd. If $k_1=0$, the condition
$2\nmid \x$ holds automatically.
 
Next we note that the condition $2^{-k_2}C(\x)\equiv 1\bmod 4$ can be written
$$
C(cx_2+c'2^{k_1}x_1', x_2)\equiv 2^{k_2}{x_1'}^3 \bmod{2^{k_2+2}}.
$$
If the form 
$C(cY+c'2^{k_1}X, Y)$ has all coefficients divisible by $2^{k_2+1}$
then this congruence has no solutions. Otherwise define $k_1'\leq k_2$
so that $2^{k_1'}$ is the largest power of $2$ dividing all the
coefficients, and set 
$C(cY+c'2^{k_1}X,Y)=2^{k_1'}C_0(X,Y)$. Writing 
$k_2'=k_2-k_1'\geq 0$ then we see that the above congruence is
equivalent to  
$
C_0(x_1',x_2)\equiv 2^{k_2'}{x_1'}^3\bmod{2^{k_2'+2}}.
$
Since $x_1'$ is odd we have
$
x_2 \equiv \alpha x_1'\bmod{2^{k_2'+2}},
$
for $\alpha\in [0,2^{k_2'+2})$ being one of the roots of 
\begin{equation}
  \label{eq:comet}
C_0(1,\alpha)\equiv
2^{k_2'}
\bmod{ 2^{k_2'+2}}.
\end{equation}
The condition that $x_2$ be odd,
which should be added when $k_1\geq 1$, is therefore equivalent to the
condition that $\alpha$ be odd. 
Finally we make the change of variables
$x_2=\alpha x_1'+2^{k_2'+2}x_2'$ and note
that $x_1',x_2'\ll X$ whenever $\x\in X\mcal{R}.$
We denote by $n(k_1,k_2)$ the number
of available $\alpha$ and recall from above that 
$\min\{k_1,k_2\}\leq v_2(\Delta)$.
Since $a$ is odd we clearly have 
$$
n(k_1,k_2)\ll \#\{x \bmod{2^{k_1+k_2}}: ~x\equiv -ba^{-1}
\bmod{2^{k_1}}, ~C(x,1)\equiv 0 \bmod{2^{k_2}}\}.
$$
If $k_2\leq k_1$ then the right hand side is at most $2^{k_2}
\ll 1$. 
If $k_2>k_1$ then the right hand side is at
most $2^{k_1}\rho_{C(x,1)}(2^{k_2})\ll 1$ 
by Lemma \ref{dan2}. Hence we have 
\begin{equation}
  \label{eq:AV}
  n(k_1,k_2)\ll 1.
\end{equation}

In summary we have shown that the conditions 
$v_2(L(\x))=k_1$, $v_2(C(\x))=k_2$ and $2\nmid \x$,
with $2^{-k_1}L(\x)\equiv 1\bmod 4$ and $2^{-k_2}C(\x)\equiv 1\bmod 4$,
can be written $\x=\M \x'$ with $x_1'\equiv 1\bmod 4 $ and 
$$
\M=\M_\alpha=
\Big(\begin{array}{cc} c'2^{k_1}&c\\ 0 &1\end{array}\Big)
\Big(\begin{array}{cc} 1&0\\ \alpha &2^{k_2'+2}\end{array}\Big)
=
\Big(\begin{array}{cc} c'2^{k_1}+c\alpha&c2^{k_2'+2}\\ \alpha
  &2^{k_2'+2}\end{array}\Big),$$
where $\alpha$ is a zero of \eqref{eq:comet} that should be odd when
$k_1\geq 1$. We note that 
\begin{equation}\label{eq:detM}
|\det \M|=2^{k_1+k_2'+2}.
\end{equation}
Furthermore,  a little thought reveals that 
\begin{equation}\label{calculK2}
K_2=\sum_{k_0\geq 0}\frac{1}{2^{2k_0}}\sum_{k_1,k_2\geq 0}\frac{ n(k_1,k_2)
}{2^{k_1+k_2'+2}}=
\frac{1}{3}\sum_{k_1,k_2\geq 0}\frac{ n(k_1,k_2)
}{2^{k_1+k_2'}},
\end{equation}
in the notation of Theorem \ref{mainS}.

We are now ready to start our analysis of $S(X)$   in earnest, for
which we follow the line of attack in \cite{4linear} and
\cite{h-b03}. 
 In the present
investigation we will not seek complete uniformity in $L,C$ and
$\mcal{R}$, unlike in \cite{4linear}, which will greatly streamline our 
exposition. 
Let us set $Y= X^{\frac{1}{2}}(\log X)^{-C}$ with $C$ a large
unspecified constant. 
When $0<n\ll X^3$  
and $n'=2^{-v_2(n)}n\equiv 1\bmod{4}$, we write
\begin{align*}
r(n)=r(n')
&=4\sum_{\substack{d_2\mid n'\\ d_2\leq {X}^{\frac{3}{2}}}}\chi (d_2)+
4\sum_{\substack{e_2\mid n'\\e_2> {X}^{\frac{3}{2}}}}\chi (e_2)\cr
&=4\sum_{\substack{d_2\mid n\\ d_2\leq {X}^{\frac{3}{2}}}}\chi (d_2)+
4\sum_{\substack{d_2\mid n\\ n'>d_2 {X}^{\frac{3}{2}}}}\chi (d_2)\\
&=4A_+(n)+4A_-(n).
\end{align*}
We will apply this with $n=C(\x)$. In the same manner 
when $0<m\ll X$  
we can write 
$$
r(m)  =4B_+(m)+4B_0(m)+4B_-(m),
$$
under the hypothesis that $m'=2^{-v_2(m)}m\equiv 1\bmod{4}$, with 
$$
B_+(m)=\sum_{\substack{d_1\mid m\\ 
d_1\leq Y}}\chi (d_1),\quad 
B_0(m)=
\sum_{\substack{d_1\mid
m\\ Y<d_1\leq \frac{X}{Y}}} \chi (d_1),
\quad
B_-(m)=\sum_{\substack{d_1\mid m\\ 
m'>d_1 \frac{X}{Y}}}\chi (d_1).
$$
Making the transformation $\x=\M \x'$, 
it follows that
$$
S_{\ma{k}}(X)=\sum_\alpha S_{\ma{k},\alpha}(X),
$$
where
$$
S_{\ma{k},\alpha}(X)=\sum_{\substack{\x'\in \ZZ^2\cap
X\mcal{R}_\M \\  x_1'\equiv 1\bmod 4}}r(L_\M(\x'))r(C_\M(\x')),
$$
with 
$$
\mcal{R}_\M =\{ \x'\in\RR^2\,:\, \M\x'\in\mcal{R}\}, \quad
L_\M(\x')= L(\M\x'),\quad
C_\M(\x')= C(\M\x').
$$
The region $\mcal{R}_\M$ has volume
$2^{-k_1-k_2'-2}\vol(\mcal{R})$ and  is contained in a box with side
length $\ll |\det \M|^{-1}2^{k_1+k_2'}\ll 1$. 
Collecting together the above we may conclude that
\begin{equation}\label{defSX}
S_{\ma{k}}(X)=16\sum_{\al}
\sum_{\pm,\pm}S_{\pm,\pm}(X;\ma{k},\alpha)+4T(X;\ma{k},\alpha), 
\end{equation}
with
\begin{equation}
  \label{eq:Spm}
S_{\pm,\pm}(X;\ma{k},\alpha)=\sum_{\substack{\x'\in \ZZ^2\cap
X\mcal{R}_\M \\  x_1'\equiv 1\bmod 4}}A_\pm(C_\M(\x'))B_\pm(L_\M(\x'))
\end{equation}
and
$$
T(X;\ma{k},\alpha)=\sum_{\substack{\x'\in \ZZ^2\cap
X\mcal{R}_\M \\  x_1'\equiv 1\bmod 4}}r(C_\M(\x'))B_0(L_\M(\x')).
$$

The sums $S_{\pm,\pm}(2^{-k_0}X;\ma{k},\alpha)$ will make up the main term in
our final asymptotic formula and we save their analysis for the
following section. We dedicate the remainder of this section to
showing that $T(2^{-{k_0}}X;\ma{k},\alpha)$ makes a satisfactory overall
contribution 
$$
\sum_{k_0\geq 0} \sum_{0\leq k_1,k_2\leq \log\log X}
\sum_\alpha
T(2^{-k_0}X;\ma{k},\alpha)=T(X), 
$$
say, to the error term. 
By \eqref{eq:AV} we have
$$ 
T(X)\ll (\log \log X)^2\sum_{k_0\geq 0} 
\sum_{m \in \mathcal{B}(k_0)}T_m(2^{-k_0}X)|B_0(m)|,
$$
where $\mathcal{B}(k_0)$ is defined to be the intersection
$$ 
\{ m\in\ZZ: \mbox{$\exists d\mid m$ s.t.\ $Y<d\leq
  XY^{-1}$}\} \cap 
\{m\in \ZZ:\mbox{$\exists
\x\in 2^{-k_0}X\mcal{R}$ s.t.\ $L (\ma{x})=m$}\}
$$
and 
$$
T_m(X)=\sum_{\substack{\x\in \ZZ^2\cap X\mcal{R}\\ L(\x)=m}} r(C(\x)).
$$
But then \cite[Lemma 6]{4linear} yields
$$ T(X)\ll  X \frac{(\log\log X)^{\frac{17}{4}}}{(\log X)^{\eta}} 
\sum_{k_0\geq 0}2^{-k_0}
\max_{\substack{m\in \NN}} |T_m(2^{-k_0}X)|,
$$ 
where $\eta=1-\frac{1+\log\log 2}{\log 2}$.
Once combined with the following result this is therefore enough to
conclude the proof that $T(X)\ll X^2(\log X)^{-\eta+\ve}$, which
suffices for Theorem \ref{mainS}.

\begin{lemma}
Let $\ve>0$ and let $m\leq X$. Then we have 
$$
T_m(X)\ll X (\log X)^\ve.
$$
\end{lemma}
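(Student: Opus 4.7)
The plan is to parametrize the linear constraint $L(\x)=m$, thereby reducing $T_m(X)$ to a sum of $r$ composed with a cubic polynomial in a single variable, and then to apply the Nair-type bound from \cite[Corollary 1]{nair} already used earlier in the paper.

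First I would divide out the content of $L$ and so reduce to $L=ax_1+bx_2$ with $\gcd(a,b)=1$; if this divisibility fails then $T_m(X)=0$. Fix a particular solution $\x_0\in\ZZ^2$ of $L(\x)=m$ and set $\v=(b,-a)$. Every integral solution of $L(\x)=m$ is then of the form $\x=\x_0+t\v$ with $t\in\ZZ$, and the condition $\x\in X\mcal{R}$ restricts $t$ to an interval $I\subset\RR$ of length $O(X)$, with implied constant depending only on $L$ and $r_\infty$. Substituting gives $C(\x)=\tilde C_m(t)$, a cubic in $t$ whose leading coefficient $C(b,-a)\neq 0$ is independent of $m$ (indeed $C(b,-a)=0$ would produce a rational root $-a/b$ of $C(x,1)$, contradicting irreducibility). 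The same irreducibility shows that $\tilde C_m$ itself has no rational root, for a rational root would correspond to a rational zero of $C$ on the line $L=m$; but irreducibility of $C$ forces the only rational solution of $C(\x)=0$ to be the origin, which is incompatible with $m>0$. Thus $\tilde C_m$ is an irreducible cubic over $\QQ$ and in particular has nonzero discriminant.

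Next I would invoke the Nair-Shiu bound for $\sum_{t\in I} r(\tilde C_m(t))$. Since $r(p)/4=1+\chi(p)$, Nair's estimate takes the shape
\[
T_m(X)\ll \frac{X}{\log X}\exp\biggl(\sum_{p\leq X}\frac{(1+\chi(p))\rho_{\tilde C_m}(p)}{p}\biggr).
\]
The prime ideal theorem applied to the cubic field $\QQ[t]/(\tilde C_m)$ yields $\sum_{p\leq X}\rho_{\tilde C_m}(p)/p=\log\log X+O(1)$, while the second assertion of Lemma~\ref{dedekind} gives $\sum_{p\leq X}\chi(p)\rho_{\tilde C_m}(p)/p=O(1)$. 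Consequently the exponential factor is $O(\log X)$, and we obtain $T_m(X)\ll X$, which is even stronger than the claimed bound.

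The main obstacle lies in the uniformity in $m$: implicit constants in Nair's theorem and in Lemma~\ref{dedekind} depend on $|\disc(\tilde C_m)|$, which may grow polynomially in $m\leq X$. This is handled by isolating the primes $p\mid c_0\disc(\tilde C_m)$ and bounding their contribution to the Euler product defining the exponential via Lemma~\ref{dan2}, together with the observation that any integer of size $X^{O(1)}$ has $O(\log X)$ distinct prime divisors. The resulting multiplicative loss is absorbed into the factor $(\log X)^{\varepsilon}$ permitted in the statement.
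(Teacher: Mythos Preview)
Your approach is essentially the same as the paper's: parametrize $L(\x)=m$, reduce to a single-variable cubic, and apply a Nair--Tenenbaum-type bound. The place where your write-up is genuinely incomplete is precisely the uniformity issue you flag in the last paragraph.

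You correctly identify that the implied constants in Lemma~\ref{dedekind} depend on the polynomial. Your proposed fix---isolate primes $p\mid c_0\disc(\tilde C_m)$ and bound their contribution---handles the \emph{bad} primes. What you do not explain is why the sum over the \emph{good} primes
\[
\sum_{\substack{p\leq X\\ p\nmid c_0\disc(\tilde C_m)}}\frac{\chi(p)\,\rho_{\tilde C_m}(p)}{p}
\]
is bounded uniformly in $m$. Simply ``isolating bad primes'' does not do this: Lemma~\ref{dedekind} applied to $\tilde C_m$ gives an $m$-dependent constant, and removing the bad primes from the sum does not make that constant go away. The paper closes this gap by showing that for $p\nmid m a c_3'$ (with $c_3'=C(-b,a)$), the injective map $x_2\mapsto ax_2(m-bx_2)^{-1}$ on $\ZZ/p\ZZ$ gives $\rho_{\tilde C_m}(p)=\rho_{C(1,x)}(p)$, a quantity \emph{independent of $m$}. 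Equivalently, the number field $\QQ[t]/(\tilde C_m)$ coincides with the fixed cubic field $\QQ[x]/(C(x,1))$ for every $m$, so the prime ideal theorem and Lemma~\ref{dedekind} may be applied once and for all to $C(x,1)$. The discrepancy is then supported on primes dividing $m\cdot c_0c_3'\cdot\disc(\tilde C_m)$, which contributes $O(\log\log\log X)$ to the exponent and is absorbed into $(\log X)^\ve$ as you describe. Without this reduction to a fixed polynomial, the argument does not close.

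Two smaller points. The single-variable Nair bound you want is \cite[Theorem~2]{nair} rather than Corollary~1 (the latter is for binary forms). Also, that theorem requires the polynomial to have no fixed prime divisor, so one must first pass to a shifted and rescaled polynomial via \cite[Lemma~5]{nair}, as the paper does; your $\tilde C_m$ may well have $2$ or $3$ as a fixed prime divisor.
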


\begin{proof}
We consider here the case $a\neq 0$, the case $b\neq 0$ being dealt
with similarly. The relation $L(\x)=m$ allows us to
write $x_1=a^{-1}(m-bx_2)$ and
$$
C(\x)=\frac{1}{a^3}C(m-bx_2,ax_2)=\frac{1}{a^3}(c_3'x_2^3+c_2'mx_2^2+c_1'm^2x_2 + c_0'm^3),
$$
with 
$$
c_3'=C( -b ,a),\quad 
c_2'=3b^2c_0-2abc_1+a^2c_2,\quad 
c_1'=-3bc_0+c_1a,\quad 
c_0'=c_0.
$$
Let $\delta_m=\gcd_{0\leq i\leq 3}(c_i'm^{3-i})$, so that
$C_m(x_2)= a^3\delta_m^{-1}C(\x)$ is primitive as a polynomial in $x_2$.
It follows that
$$
T_m(X)
\leq 
\sum_{\substack{\x\in \ZZ^2\cap
X\mcal{R}\\ 
L(\x)=m}} 
r(a^4C(\x))\leq\sum_{x_2\ll X}  r(a\delta_mC_m(x_2)).
$$
The rest of the proof has much in common with the proof of \cite[Lemma
5]{4linear} and so we shall attempt to be brief. 

Write $r_0(n)=\frac{1}{4}r(n)$ and $r_1$ for the multiplicative function
defined via
$$
r_1(p^\nu)=
\begin{cases} 
\nu+1, & \mbox{if $p\mid 3a\delta_m$},\\
r_0(p^\nu) , & \mbox{otherwise}.
\end{cases}
$$
We obtain
$$
T_m(X)\leq 4 \tau(a\delta_m) \sum_{x_2\ll X}  r_1(C_m(x_2)).
$$
Clearly $\delta_m\mid c_3'\neq 0$, whence 
$\tau(a\delta_m)\ll 1.$
The polynomial $C_m\in \ZZ[x_2]$ has degree $3$ and is both primitive
and irreducible over $\QQ$. Therefore the only possible fixed prime divisors are
$2$ and $3$.  An application of \cite[Lemma 5]{nair} allows one to
deduce that there exists $\alpha\mid 36$, $m_2,m_3\leq 9$ and 
$\gamma=2^{m_2}3^{m_3}$ such that the polynomial
$$
g_{\alpha,\beta}(x_2)=\frac{C_m(\alpha x_2+\beta)}{\gamma}
$$ 
is without any fixed prime divisor for each $\beta$ modulo $\alpha$.  We obtain
$$ 
\sum_{x_2\ll X}  r_1(C_m(x_2))\ll \sum_\alpha\sum_{\beta\bmod \alpha}
\sum_{x_2\ll  X}
r_1(g_{\alpha,\beta}(x_2)).
$$
Since $\|g_{\alpha,\beta}\|\ll \|C_m\| 
\ll m^3$, it now follows from \cite[Theorem 2]{nair} that
$$
\sum_{x_2\ll X}  r_1(C_m(x_2))\ll X
\sum_\alpha\sum_{\beta\bmod \alpha}
\prod_{p\ll X}
\Big\{\Big(1-\frac{\rho_{g_{\alpha,\beta}}(p)}{p}\Big)
\sum_{\nu\geq 0}\frac{\rho_{g_{\alpha,\beta}}(p^{\nu})r_1(p^{\nu})}{p^{\nu}}\Big\},
$$
because $X\gg m^{\varepsilon},$ where $\rho_{g_{\alpha,\beta}}(p)$ is
given by \eqref{eq:f}.
A straightforward consideration of discriminants (see \cite[Lemma
1]{nair}, for example) yields
$\disc(g_{\alpha,\beta})\ll m^6$.
 
To go further it is clear that we will need good upper bounds for the
function $\rho_{g_{\alpha,\beta}}(p^{\nu})$ for prime powers
$p^\nu$. Such estimates are furnished by Lemma \ref{dan2}.
Thus for any prime $p$ we deduce that
$$
\sum_{\nu\geq 1} \frac{\rho_{g_{\alpha,\beta}}(p^{\nu})r_1(p^{\nu})}{p^{\nu}} 
\ll \frac{1}{p}.
$$
By including a factor 
$$
\ll \prod_{p\mid \disc(g_{\alpha,\beta})}\Big(1+\frac{1}{p}\Big)^{O(1)}
\ll (\log\log m)^{O(1)} \ll (\log X)^\ve,
$$ 
we take care of the primes $p\mid \disc(g_{\alpha,\beta})$. 
Next, for any $p\nmid \disc(g_{\alpha,\beta})$, we have 
$$
\sum_{\nu\geq 2}
\frac{\rho_{g_{\alpha,\beta}}(p^{\nu})r_1(p^{\nu})}{p^{\nu}} 
\ll \frac{1}{p^2},
$$
which allows us to ignore the exponents $\nu\geq 2$.

For any prime $p\geq 5$, we have $\rho_{g_{\alpha,\beta}}(p
)=\rho_{C_m}(p )$, which for $p\nmid ac_3'$ is equal to
$\rho_{C(m-bx_2,a x_2)}(p ).$
If $p\nmid ma$ then the map $\ZZ/p\ZZ\setminus \{ mb^{-1}\}\rightarrow
\ZZ/p\ZZ$, given by $x_2\mapsto ax_2(m-bx_2)^{-1}$ is injective. It follows that
$
\rho_{g_{\alpha,\beta}}(p )=\rho_{C(1,x)}(p ),
$
for $p\geq 5$ and $p\nmid ma c_3'$. Observing that
$r_0(p)=1+\chi(p)$, our investigation so far has
therefore shown that 
\begin{align*}
\sum_{x_2\ll X} r_1\big(g_{\al,\be}(x_2)\big)
&\ll X(\log X)^\ve
\prod_{\substack{p\ll X\\ p\nmid \disc(g_{\alpha,\beta})}} \Big(1+\frac{\rho_{C(1,x)}(p)(r_0(p)-1)}{p}\Big) \\
&\ll X(\log X)^\ve
 \prod_{p\ll X} \Big(1+\frac{\chi(p)\rho_{C(1,x)}(p)}{p}\Big)\\
&\ll X(\log X)^\ve,
\end{align*}
by Lemma \ref{dedekind}.   This
therefore completes the proof of the lemma.
\end{proof}

\section{Level of distribution}

The focus of this section is upon estimating the sums in
 \eqref{eq:Spm}.  For any $\bd\in \NN^2$ let
$$
\mathsf{\Lambda}({\ma{d}})=\mathsf{\Lambda}({\ma{d}};L,C)=  \{ \x \in
\ZZ^2 : d_1 \mid L (\x), ~ d_2\mid C(\x)\}
$$
and let 
$\mathsf{\Lambda}_{\M}({\ma{d}})=\mathsf{\Lambda}({\ma{d}};L_\M,C_\M)$.
Given any region $\mcal{A}\subset \RR^2$, we will write  
$X\mcal{A}_4$ for the set $\{\x\in \ZZ^2\cap X\mcal{A}: x_1\equiv 1
\bmod{4}\}$.  
We clearly have 
$$ 
S_{\pm,\pm}(X;\ma{k},\alpha)
=\sum_{\substack{d_1\ll Y \\ d_2\ll {X}^{\frac{3}{2}}}}\chi(d_1d_2)\#(
\mathsf{\Lambda}_\M({\ma{d}})\cap X\mcal{R}_4^{\pm,\pm}(\ma{d},\M)),
$$
with,  for example, 
$$
X\mcal{R}^{-,-}(\ma{d},\M)=
\{\x'\in X\mcal{R}_\M :  C_\M(\x')>d_2 {X}^{\frac{3}{2}}, ~L_\M(\x')>d_1
XY^{-1}\}.
$$

Let $\|\M\|$ denotes the maximum modulus of any entry in the matrix
$\M$ and let 
$\rho_\M(\bd) =\rho(\bd;L_\M,C_\M)$, in the notation of \eqref{defrho}.
Loosely speaking the 
idea is now to rewrite the inner cardinality as a sum of 
cardinalities, each one over lattice points belonging to an
appropriate region. We would like to approximate each such cardinality
by its  volume. In doing so we need to show that the associated error term makes 
a satisfactory overall contribution once summed over the remaining
parameters.  This is the essential content of the 
following ``level of distribution'' result.

\begin{lemma}\label{LOD} 
Let $\varepsilon>0$ and let $V_1, V_2,X  \ge
2$. 
Assume that $C\in \ZZ[\x]$ is an irreducible cubic form and let $L\in
\ZZ[\x]$ be a non-zero linear form. 
Then there exists an absolute constant $A>0$ such that
\begin{align*}
\sum_{\substack{\ma{d}\in\NN^2 \\ d_i\le V_i\\ 2\nmid d_1d_2}}
 \sup_{ \partial(\mcal{A})\leq M}
\Bigg|\#\big(\mathsf{\Lambda}_\M({\ma{d}})\cap X\mcal{A}_4 \big)
&  -  \frac{\vol(\mcal{A} )X^2\rho_\M(\bd) }{4(d_1d_2 )^2} \Bigg|\\
&\ll
\|\M\|^\ve  (MX(\sqrt{V_1V_2}+V_1)+V_1V_2)(\log V_1V_2)^{A},
\end{align*}
where
the supremum  is taken over 
compact subsets $\mcal{A}\subset \RR^2$ 
whose boundary is a
piecewise continuously
differentiable closed curve with length $\partial(\mcal{A})\leq M$ and 
throughout which $L(\x)>0$ and $C(\x)>0$.
\end{lemma}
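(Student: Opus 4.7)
The plan is to follow the standard recipe for level-of-distribution estimates: for each $\bd$ with $d_1 d_2$ odd, decompose the set $\mathsf{\Lambda}_\M(\bd) \cap \{x_1 \equiv 1 \bmod{4}\}$ into a finite union of cosets of suitably chosen sublattices of $\ZZ^2$, apply a classical planar lattice-point estimate to each coset, and sum the resulting errors over $\bd$ with $d_i \leq V_i$.

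The critical step is the choice of sublattice. The linear condition $d_1 \mid L_\M(\x)$ together with $x_1 \equiv 1 \bmod{4}$ yields a coset of a sublattice of index $\asymp 4d_1$; the cubic condition $d_2 \mid C_\M(\x)$ does not directly define a sublattice, but can be handled by parametrizing its residue classes. Specifically, for each root $\alpha$ of $C_\M(\alpha,1) \equiv 0 \bmod{d_2}$, the primitive part of $\{\x : d_2 \mid C_\M(\x)\}$ along the direction $(\alpha,1)$ gives a sublattice of $\ZZ^2$ of determinant $\asymp d_2$. Intersecting with the linear and 2-adic conditions produces, for each valid $\alpha$, a sublattice $\Lambda_\alpha(\bd) \subset \ZZ^2$ of determinant $\asymp 4d_1d_2$, with content corrections that get absorbed into $\|\M\|^\ve$. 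The degenerate cases $\gcd(d_2, \x) > 1$ are treated by a recursion that mirrors the identity $\rho \leftrightsquigarrow \rho^*$ developed in the proof of Lemma~\ref{rhopremier}. To each coset $\x_0 + \Lambda_\alpha(\bd)$ one applies the classical estimate
$$
\#\bigl((\x_0 + \Lambda_\alpha(\bd)) \cap X\mcal{A}\bigr) = \frac{\vol(X\mcal{A})}{\det \Lambda_\alpha(\bd)} + O\!\left(\frac{XM}{\lambda_1(\Lambda_\alpha(\bd))} + 1\right),
$$
where $\lambda_1$ denotes the first successive minimum. Summing the main terms over all valid $\alpha$ reconstructs precisely $\rho_\M(\bd)\vol(\mcal{A})X^2 / (4(d_1d_2)^2)$.

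By Minkowski's second theorem, $\lambda_1(\Lambda_\alpha(\bd)) \asymp \sqrt{d_1 d_2}$ generically, with an exceptional \emph{long-and-thin} regime where $\lambda_1$ is dominated by the linear form direction and is only of size $\asymp \sqrt{d_1}$; the generic regime contributes the $MX\sqrt{V_1V_2}$ term while the exceptional regime contributes the $MXV_1$ term. Summation over $\bd$ uses the mean estimates $\sum_{d_2 \leq V_2} \rho_{C_\M(x,1)}(d_2)/d_2^{1/2} \ll V_2^{1/2}(\log V_2)^A$ and $\sum_{d_2 \leq V_2} \rho_{C_\M(x,1)}(d_2) \ll V_2(\log V_2)^A$, both of which follow from Lemma~\ref{dedekind} together with partial summation; the trivial $+1$ error per coset contributes $\ll V_1 V_2 (\log V_1V_2)^A$, producing the final $V_1V_2$ term. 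The main obstacle will be the geometric analysis of $\Lambda_\alpha(\bd)$: identifying precisely the exceptional set of $\bd$ where $\lambda_1$ falls substantially below $\sqrt{d_1 d_2}$, and verifying that its contribution is controlled by $MXV_1$ rather than something larger. The $\|\M\|^\ve$ factor then absorbs divisor-function losses arising from the contents of $L_\M, C_\M$ and from the discriminant factors entering Lemmas~\ref{dan2} and~\ref{rhopremier}.
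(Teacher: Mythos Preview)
The paper does not actually prove Lemma~\ref{LOD}; the authors simply defer to \cite[Lemme~5]{L1L2Q}, \cite[Lemma~3.2]{D99} and \cite[Proposition~1]{M08}. Your outline --- decompose $\sfl_\M(\bd)$ via the roots of the cubic congruence into cosets of rank-two sublattices of determinant $\asymp d_1d_2$, apply the planar lattice-point count with error $O(MX/\lambda_1+1)$, reassemble the main term from the volumes, and sum the errors over $\bd$ --- is exactly the method of those references, so there is nothing in the paper itself to compare against.

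One detail deserves correction. In the cited proofs the sum $\sum_{\bd,\alpha}1/\lambda_1(\Lambda_\alpha(\bd))$ is not controlled by bounding $\lambda_1$ lattice by lattice, but by inverting: one sums over candidate shortest vectors $\mathbf{v}$ with $|\mathbf{v}|^2\ll d_1d_2\le V_1V_2$ and, for each $\mathbf{v}$, counts the admissible $(\bd,\alpha)$. When $L_\M(\mathbf{v})C_\M(\mathbf{v})\neq 0$ this count is at most $\tau(L_\M(\mathbf{v}))\tau(C_\M(\mathbf{v}))$, and a divisor-function average over $|\mathbf{v}|\ll\sqrt{V_1V_2}$ produces the $\sqrt{V_1V_2}$ contribution. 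The exceptional case is $L_\M(\mathbf{v})=0$ (the alternative $C_\M(\mathbf{v})=0$ cannot occur, $C$ being irreducible over $\QQ$): here $\mathbf{v}$ is proportional to a fixed primitive vector $\mathbf{v}_0$, the condition $d_1\mid L_\M(\mathbf{v})$ becomes vacuous, and the coprimality built into $\Lambda_\alpha$ forces $d_2\mid C_\M(\mathbf{v}_0)$, so $d_2$ takes only $O(\|\M\|^\ve)$ values while $d_1\le V_1$ is unconstrained and $\lambda_1\le|\mathbf{v}_0|=O(1)$. This is what yields the $V_1$ term. Thus the exceptional lattices have $\lambda_1=O(1)$ with $d_2$ bounded, not $\lambda_1\asymp\sqrt{d_1}$ as you write --- though your conclusion that they contribute $MXV_1$ is correct.
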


We will not prove this result here, following closely as it does the
arguments developed in \cite[Lemme 5]{L1L2Q}, \cite[Lemma 3.2]{D99} and 
\cite[Proposition 1]{M08}.  
Now it follows from \eqref{eq:detM} that $d_1d_2$ is coprime to $\det
\M$, so that $\rho_\M(\ma{d})=\rho(\ma{d};L ,C)=\rho(\ma{d} ).$
We may therefore conclude from Lemma~\ref{LOD} that
$$ 
S_{\pm,\pm}(X;\ma{k},\alpha)
=\sum_{\substack{d_1\ll Y \\ d_2\ll {X}^{\frac{3}{2}}}}\frac{ 
\chi(d_1d_2)
 \vol(\mcal{R}^{\pm,\pm}(\ma{d},\M) ) X^2\rho(\bd) }{4(d_1d_2
 )^2}+O\Big(\frac{2^{\ve(k_1+k_2)}X^2}{(\log X)^{ \frac{C}{2}-A}}\Big). 
$$
Choosing $C=2A+8$ and replacing $X$ by $2^{-k_0}X$, we see that the overall contribution 
from this error term is 
$$
\ll \sum_{k_0\geq 0}
\frac{(2^{-k_0}X)^2}{(\log X)^4} \sum_{k_1,k_2\leq 
\log \log X}2^{\ve(k_1+k_2)} n(k_1,k_2)\ll \frac{X^2}{(\log X)^2},
$$
by \eqref{eq:AV}. This is satisfactory for Theorem \ref{mainS}.

Our final task is to produce an asymptotic formula for the sum
$$
S(V_1,V_2)=\sum_{\substack{\bd \in\NN^2\\ 
d_i\leq V_i}}
  \frac{\chi(d_1d_2)\rho(\bd) }{(d_1d_2 )^2}.
$$
Recall the definition of $K_p$ from the statement of Theorem \ref{mainS}.
We will establish the following result.

\begin{lemma}\label{SV1V2} Let $\varepsilon>0$ and $A>0$.
For any $V_1,V_2\geq 2$ we have 
$$
S(V_1,V_2)=\frac{\pi^2}{16} K' 
+O\Big(\frac{\log V_{\min}}{(\log V_{\max})^{A}} + \frac{1}{(\log V_{\min})^{A}} \Big) 
$$
where $V_{\min}=\min \{V_1,V_2\}$, $V_{\max}=\max \{V_1,V_2\}$ and 
$
K'=\prod_{p>2} K_p.
$
\end{lemma}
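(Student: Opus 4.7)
The plan is to recognize $S(V_1,V_2)$ as a truncation of an absolutely convergent (after using $\chi$-cancellation) double Dirichlet series whose total value factors as an Euler product, identify the main term via the classical identity $L(1,\chi)=\pi/4$, and control the tails through the machinery of Section~2.

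First I would note that $\chi(d_1 d_2)=0$ unless $d_1 d_2$ is odd, so the sum is implicitly over odd pairs. Since $\rho(\bd)=\rho(\bd;L,C)$ is pair-multiplicative (if $\gcd(d_1 d_2,e_1 e_2)=1$ then $\rho(d_1 e_1,d_2 e_2)=\rho(d_1,d_2)\rho(e_1,e_2)$, via CRT on $[0,d_1 d_2 e_1 e_2)^2$), the ``complete'' sum
$$
S_\infty:=\sum_{d_1,d_2\geq 1}\frac{\chi(d_1 d_2)\rho(\bd)}{(d_1 d_2)^2}
$$
has an Euler expansion $S_\infty=\prod_{p>2}E_p$, where $E_p=\sum_{\nu_1,\nu_2\geq 0}\chi(p^{\nu_1+\nu_2})\rho(p^{\nu_1},p^{\nu_2})p^{-2\nu_1-2\nu_2}$. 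By the definition of $K_p$ in Theorem~\ref{mainS} we have $E_p=(1-\chi(p)/p)^{-2}K_p$, so
$$
S_\infty \;=\; K'\prod_{p>2}\bigl(1-\chi(p)/p\bigr)^{-2}\;=\;K'\,L(1,\chi)^2\;=\;\frac{\pi^2}{16}K',
$$
using $L(1,\chi)=\pi/4$ and $\chi(2)=0$. This produces the required main term, but convergence must still be justified using $\chi$-cancellation rather than absolute values.

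To establish this rigorously and to estimate the truncation error, I would split $d_2=d_2'd_2''$ with $d_2'\mid d_1^\infty$ and $\gcd(d_2'',d_1)=1$. Pair-multiplicativity then gives $\rho(d_1,d_2)=\rho(d_1,d_2')\rho(1,d_2'')$, whence
$$
S(V_1,V_2)=\sum_{d_1\leq V_1}\frac{\chi(d_1)}{d_1^2}\sum_{\substack{d_2'\mid d_1^\infty \\ d_2'\leq V_2}}\frac{\chi(d_2')\rho(d_1,d_2')}{{d_2'}^2}\sum_{\substack{n\leq V_2/d_2'\\ \gcd(n,d_1)=1}}\frac{\chi(n)\rho(1,n)}{n^2}.
$$
The innermost sum over $n$ is a coprimality-restricted version of the Dirichlet series $D(s)$ evaluated at $s=1$. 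Using the factorization $D(s)=G_{C(x,1)}(s,\chi)A(s)$ from~\eqref{defG}, together with Lemma~\ref{dedekind} (applied to $G_{C(x,1)}(1,\chi)$) and Lemma~\ref{convol} (to convolve with the absolutely convergent series $A$, whose convergence is guaranteed by Lemma~\ref{H}), one obtains
$$
\sum_{\substack{n\leq Z \\ \gcd(n,d_1)=1}}\frac{\chi(n)\rho(1,n)}{n^2}=T_\infty(d_1)+O_A\bigl(\tau(d_1)^{O(1)}(\log Z)^{-A}\bigr),
$$
uniformly in $d_1$. Substituting this back and recombining the local Euler factors at primes $p\mid d_1$ assembles $\prod_{p>2}E_p$ as the main term $S_\infty$; the outer summations over $d_1$ and $d_2'$ are handled by elementary bounds, using that $\sum_{d_2'\mid d_1^\infty}\rho(d_1,d_2')/{d_2'}^2$ is essentially the missing local Euler factor at primes dividing $d_1$.

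The asymmetry $\log V_{\min}/(\log V_{\max})^A+(\log V_{\min})^{-A}$ emerges naturally by performing the argument \emph{twice} (summing first over the $d_i$ of larger range, then of smaller range) and taking the better of the two bounds: the $(\log V_{\max})^{-A}$ saving comes from applying Lemma~\ref{convol} to the longer sum, a factor of $\log V_{\min}$ accrues from partial-summation control of the outer variable over a range of length $V_{\min}$, and the residual $(\log V_{\min})^{-A}$ covers the tail of the outer sum. The main obstacle is arranging the multiplicative book-keeping so that the partial Euler factors at primes dividing $d_1$ recombine cleanly into $K_p$, and securing the $\tau(d_1)^{O(1)}$-uniformity of the inner asymptotic so that it survives summation over $d_1\leq V_1$ without sacrificing the logarithmic savings.
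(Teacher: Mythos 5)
Your proposal is correct in outline and rests on the same pillars as the paper's proof: the constant is identified through the Euler product together with $L(1,\chi)=\pi/4$, the one–variable asymptotics come from Lemmas \ref{dedekind}, \ref{H} and \ref{convol}, the local estimates from Lemma \ref{rhopremier}, and the asymmetric error term arises from running the argument in both orders and keeping the better bound. The only real difference is the bookkeeping of the cross-terms at primes dividing both $d_1$ and $d_2$: you split $d_2=d_2'd_2''$ with $d_2'\mid d_1^\infty$, which forces you to invert (or otherwise control, uniformly in $d_1$) the local factors $D_p(1+s)$ at $p\mid d_1$ in order to apply Lemma \ref{convol} to the coprimality-restricted inner sum; the paper instead fixes the outer variable, normalises the summand by $\rho(1,d_2)$ (resp.\ $\rho(d_1,1)$) and treats the ratio $\rho(d_1,d_2)/\rho(1,d_2)$ directly as a multiplicative function of $d_1$ whose Dirichlet series factors as $L(1+s,\chi)J_1(1+s;d_2)$, so no inversion is needed and the cost is shifted to bounding $J_1^*(\frac{3}{4};d_2)$ on average over $d_2$ — which is exactly where the single factor $\log V_{\min}$ in the error term comes from. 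Two points you should not omit in a full write-up: the reduction to primitive $L,C$ via \eqref{eq:mars}, since Lemma \ref{rhopremier} is stated only for primitive forms; and a verification that the finitely many local factors you invert at small primes $p\mid d_1$ are bounded away from zero, because $\rho(1,p)/p^2$ need not be small when $p$ is small or divides $c_0\Delta\Delta'$.
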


\begin{proof} 
We begin by establishing the lemma for the case in which 
$L$ and $C$ are both primitive.
We first consider the case  $V_1\geq V_2$.
The sum to be estimated can be written
$$
S(V_1,V_2)
=\sum_{ d_2\leq V_2} \frac{\chi( d_2)\rho(1,d_2) }{ d_2^2}S_1
(V_1,d_2),
$$
with
$$
S_1 (V_1,d_2)=
 \sum_{ d_1\leq V_1} \frac{\chi(d_1)\rho(d_1,d_2) }{\rho(1,d_2)d_1^2}.
$$
This summand is a multiplicative arithmetic function in $d_1$ and so the
associated Dirichlet series $F_1(s)$ has an Euler product
$\prod_{p}F_{1,p}(s).$  
When $p^{\nu_2}\| d_2$, we have
$$
F_{1,p}(s)= \sum_{\nu_1\geq 0}\frac{\chi( p^{\nu_1})\rho( p^{\nu_1},p^{\nu_2})
}{\rho(1,p^{\nu_2})p^{\nu_1(2+s)}}.
$$ 
In particular when $p\nmid d_2$ we have 
$$
F_{1,p}(s)=\Big(1-\frac{\chi(p)}{p^{1+s}}\Big)^{-1}
$$
since $\rho(d_1,1)=d_1$.
We may therefore write 
$
F_1(s)=L(1+s,\chi)J_1( 1+s;d_2),
$
where $L(1+s,\chi)$ is the Dirichlet $L$-function associated to
$\chi$ and $J_1(s;d_2)$ is the Dirichlet series associated to an arithmetic
function $j_{d_2}$, with $J_1$ absolutely convergent in the 
half-plane $\Re e (s) \geq 0$. 
We observe that
\begin{equation}
\label{H1p/F1p}
J_{1,p}( 1;d_2)=\Big(1-\frac{\chi(p)}{p}\Big)F_{1,p}(0).
\end{equation}

Let us write $ J_1^*(s;d_2)$ for the  Dirichlet series associated to $|j_{d_2}|$.
For any    $A>0$, Lemma \ref{convol}
yields 
$$
S_1 (V_1,d_2)=L(1,\chi)J_1(1;d_2) +O\Big(\frac{J_1^*(\frac{3}{4};d_2)}{(\log
  V_1)^{A}}\Big).
$$
Now it is clear that 
$$
J_1^*\Big(\frac{3}{4}; d_2\Big)=\prod_{p^{\nu_2}\parallel d_2}J_{1,p}^*\Big(\frac{3}{4}; p^{\nu_2}\Big),
$$
with
$$
\rho(1,p^{\nu_2})J_{1,p}^*\Big(\frac{3}{4};p^{\nu_2}\Big)
\leq (1+ p^{-\frac{3}{4}})
\sum_{\nu_1\geq 0}\frac{ \rho(
p^{\nu_1},p^{\nu_2}) }{p^{\frac{7\nu_1}{4} }}.
$$ 
We apply the inequalities in Lemma \ref{rhopremier} to estimate $\rho(
p^{\nu_1},p^{\nu_2}) .$ 

Suppose first that $p\nmid c_0\D \Delta'$. Then 
$\rho(1,p^{\nu_2})\leq 4p^{\nu_2+[\frac{\nu_2}{3}]}$,  
$$
\sum_{1\leq \nu_1< \lceil \frac{\nu_2}{3}\rceil}\frac{ \rho(
p^{\nu_1},p^{\nu_2}) }{p^{\frac{7\nu_1}{4}}}\leq \Big(3+\frac{1}{p}\Big)
\sum_{1\leq \nu_1< \lceil \frac{\nu_2}{3}\rceil} p^{  \frac{\nu_1}{4}+
  \nu_2+[\frac{\nu_2}{3}]}\leq 
\Big(3+\frac{1}{p}\Big)\Big[ \frac{\nu_2}{3}\Big] 
p^{   \nu_2+\frac{5}{4}[\frac{\nu_2}{3}]},
$$
and 
$$
\sum_{\nu_1\geq \lceil \frac{\nu_2}{3}\rceil}\frac{ \rho(
p^{\nu_1},p^{\nu_2}) }{p^{\frac{7\nu_1}{4}}}\leq \sum_{\nu_1\geq \lceil \frac{\nu_2}{3}\rceil}
p^{2 \nu_2-\lceil \frac{\nu_2}{3}\rceil -\frac{3\nu_1}{4}}=
\frac{p^{2 \nu_2-\frac{7}{4}\lceil \frac{\nu_2}{3}\rceil 
 }}{1-p^{-\frac{3}{4}}}.
$$  
Thus
\begin{equation}
\begin{split}\label{bound}
\frac{\rho(1,p^{\nu_2})(J_{1,p}^*(\frac{3}{4};p^{\nu_2})-1)}{p^{\nu_2}} 
\leq~&   \Big(
\frac{p^{  \nu_2-\frac{7}{4} \lceil \frac{\nu_2}{3}\rceil
 }}{1-p^{-\frac{3}{4}}}+\Big(3+\frac{1}{p}\Big)\Big[
\frac{\nu_2}{3}\Big]   
p^{   \frac{5}{4} [\frac{\nu_2}{3}]}\Big)(1+p^{-\frac{3}{4}})
\\
&+4p^{[\frac{\nu_2}{3}]-\frac{3}{4}}\\
\ll & (1+\nu_2)p^{\frac{5\nu_2}{12}}.
\end{split}
\end{equation}

Suppose now that $p\mid \gcd(d_2, c_0\D\Delta')$. On the one
hand we have 
$$
\sum_{\nu_1\geq 0}\frac{ \rho(
p^{\nu_1},p^{\nu_2}) }{p^{\frac{7\nu_1}{4}}}
\ll \rho(1,p^{\nu_2}) +
\sum_{\nu_1\geq 1} \frac{p^{\nu_1+2\nu_2}}{p^{\frac{7\nu_1}{4}}}\ll p^{2\nu_2-\frac{3}{4}},
$$
which will suffice for small values of $\nu_2$. On the other hand we
have
$$ 
\sum_{\nu_1\geq 0}\frac{ \rho(
p^{\nu_1},p^{\nu_2}) }{p^{\frac{7\nu_1}{4} }}
\ll  
\sum_{\nu_1\leq \frac{2\nu_2}{3}}\frac{  p^{2\nu_1+\frac{4\nu_2}{3}}}{p^{\frac{7\nu_1}{4}}}
+\sum_{\nu_1> \frac{2\nu_2}{3}}\frac{  p^{\nu_1+2\nu_2}}{p^{\frac{7\nu_1}{4}}}
\ll p^{  \frac{3\nu_2}{2}}.
$$ 
Observe that
$$
\prod_{p\mid c_0\D\Delta'}\Big(1+O\Big(\sum_{\nu_2\geq 1}\min\{
p^{-\frac{3}{4}},
p^{-\frac{\nu_2}{2}}\}\Big)\Big)
\leq \prod_{p\mid c_0\D\Delta'}\big(1+O( p^{-\frac{3}{4}}) \big),
$$
which is $O(1)$. 
Using Dirichlet convolution these estimates allow us to conclude that
$$
\sum_{ d_2\leq V_2} \frac{ \rho(1,d_2) J_1^*(\frac{3}{4};d_2)}{ d_2^2} \ll \sum_{ d_2\leq V_2} \frac{ \rho(1,d_2) }{ d_2^2}\ll \log V_2,
$$ 
whence 
$$
S(V_1,V_2)
=\frac{\pi}{4}\sum_{ d_2\leq V_2} \frac{\chi( d_2)\rho(1,d_2)J_1(1;d_2)
}{d_2^2}+O\Big(\frac{ \log V_2}{(\log
  V_1)^{A}}\Big).
$$

The function $J_1(1;d_2)$ is a multiplicative arithmetic function in
$d_2$. Let $p\nmid c_0\D \D'$. We have 
$$|J_{1,p}(1;p^{\nu_2})-1|\leq J_{1,p}^*(1;p^{\nu_2})-1\leq
J_{1,p}^*\Big(\frac{3}{4};p^{\nu_2}\Big)-1 .$$ 
Combining \eqref{H1p/F1p} with  \eqref{bound} allows us to
show that for $1\leq \nu_2\leq 3$  
we have 
$$
\rho(1,p^{\nu_2})J_1(1;p^{\nu_2})=\rho(1,p^{\nu_2})+O(p^{2
  \nu_2-\frac{7}{4}\lceil \frac{\nu_2}{3}\rceil}) 
$$
and for $\nu_2\geq 4$ we have
$$
\rho(1,p^{\nu_2})J_1(1;p^{\nu_2})=\rho(1,p^{\nu_2})+
O\big( (1+\nu_2)p^{\frac{5\nu_2}{12}}\big).
$$
Thus, in terms of Dirichlet convolution, the function
$\chi(d_2)\rho(1,d_2)J_1(1;d_2)d_2^{-1}$ 
is close to $ \chi(d_2)\rho(1,d_2)d_2^{-1}$ and so to $\chi(d_2)\rho_{C(x,1)}(d_2).$
It now follows from Lemmas  \ref{dedekind}, \ref{H}  and \ref{convol} that
$$
S(V_1,V_2)
=\frac{\pi}{4}\vartheta(C(x,1);\chi)K_1'+O\Big(\frac{  \log
V_2}{(\log V_1)^{A}}+\frac{  1}{ (\log V_2)^{A}}\Big),
$$
for any  $A>0$, with
\begin{align*}
K_1'
&=\vartheta(C(x,1);\chi)^{-1}\sum_{ d_2\geq 1} \frac{\chi(
  d_2)\rho(1,d_2)J_1(1;d_2)}{d_2^2} \cr
&=\prod_{p}\Big(\frac{1-\chi(p)p^{-1}}{H_{p,C(x,1)}(1)}
\sum_{\nu_2\geq 0}\frac{\chi( p^{\nu_2})\rho(1,p^{\nu_2})J_1(1;p^{\nu_2})
}{ p^{2\nu_2}}\Big)\cr
&= \prod_{p}\Big(\frac{(1-\chi(p)p^{-1})^2}{H_{p,C(x,1)}(1)}
\sum_{\nu_2\geq 0}\frac{\chi( p^{\nu_2}) 
}{ p^{2\nu_2}}\sum_{\nu_1\geq 0}\frac{\chi(p^{\nu_1}) \rho(
p^{\nu_1},p^{\nu_2}) }{p^{2\nu_1 }}\Big)\\
&=\frac{\pi K'}{4 \vartheta(C(x,1);\chi)}.
\end{align*}
Here we have used \eqref{H1p/F1p} for the penultimate equality. 
This completes the proof
of the lemma in the case $V_1\geq V_2$.

Next we suppose that $V_2\geq V_1$. 
The estimation of $S(V_1,V_2)$ in this case is completely analogous to
the case we have just dealt with apart from a number of minor
technical complications.  
We begin with the expressions
$$
S(V_1,V_2)
=\sum_{ d_1\leq V_1} \frac{\chi( d_1)\rho( d_1,1) }{
  d_1^2}S_2(V_2,d_1), \quad
S_2(V_2,d_1)=
 \sum_{ d_2\leq V_2} \frac{\chi( d_2)\rho(d_1,d_2)
 }{\rho(d_1,1)d_2^2}.
$$
One sees that the sum $S_2 (V_2,d_1)$ again involves a multiplicative
arithmetic function with associated Dirichlet series 
 $F_2(s)=\prod_{p}F_{2,p}(s).$ 
When $p\nmid d_1 $, we have 
$$
F_{2,p}(s)=\sum_{\nu_2\geq 0}\frac{\chi( p^{\nu_2})\rho( 1,p^{\nu_2})
}{ p^{\nu_2(2+s)}}=D_p(1+s)=G_{p,C(x,1)}(1+s)A_p(1+s),
$$
where $D_p(s)$, $G_{p,C(x,1)}(s)$, $ A_p(s)$ are the Eulerian factors
of the Dirichlet series appearing in \eqref{defG}.
When $p^{\nu_1}\| d_1$ and $p\nmid c_0\Delta\Delta'$ it follows from
part (2) of Lemma \ref{rhopremier} and the identity $\rho(p^\nu,1)=p^\nu$ 
that 
$$
|F_{2,p}(s)-1|\leq \sum_{\nu_2\geq 1}\frac{ \rho(
p^{\nu_1},p^{\nu_2}) }{\rho( p^{\nu_1},1)p^{ \nu_2(2+\sigma)}}\ll
p^{-\frac{3}{4}},
$$   
for $\Re e (s)=\sigma\geq -\frac{1}{4}$.
When 
$p^{\nu_1}\| d_1$ and $p\mid c_0\Delta\Delta'$ 
we deduce from part (3) of Lemma~\ref{rhopremier} that
$$
F_{2,p}(s)\ll p^{\frac{3\nu_1 }{8}},
$$
for $\Re e (s)\geq -\frac{1}{4}$.
We may therefore write $F_2(s)=G_{C(x,1)}(1+s,\chi)J_2(1+s;d_1)$
with $G_{C(x,1)}(s,\chi)$ given in \eqref{defHf} and $J_2(s;d_1)$ the
Dirichlet series associated to an arithmetic function
$j_{d_1}$ which is absolutely  convergent in the the half-plane $\Re e
(s) >\frac{5}{6}$.

Lemmas   \ref{dedekind}, \ref{H} and \ref{convol} now yield
$$
S(V_1,V_2)=\vartheta(C(x,1);\chi)\sum_{d_1\leq V_1} \frac{\chi( d_1)\rho( d_1,1)J_2(1;d_1) }{
d_1^2} +O\Big(\frac{1}{(\log V_2)^{A}}\sum_{d_1\leq V_1} \frac{ g(d_1) 
}{d_1}\Big),
$$
with $g$ a multiplicative function satisfying
$$
g(p^\nu)=
\begin{cases}
1+O(p^{-\frac{3}{4}}), & \mbox{if $p\nmid c_0\Delta\Delta'$},\\
O(p^{\frac{3\nu}{8}}) , & \mbox{otherwise}.
\end{cases}
$$
This implies that
$$
S(V_1,V_2)=\vartheta(C(x,1);\chi)\sum_{d_1\leq V_1} \frac{\chi( d_1)J_2(1;d_1) }{ d_1}
+O\Big(\frac{\log V_1}{(\log V_2)^{A}} \Big). 
$$
An application of  Lemma \ref{convol} yields
$$
S(V_1,V_2)=\vartheta(C(x,1);\chi)\frac{\pi}{4}K_2'
+O\Big(\frac{\log
V_1}{(\log V_2)^{A}}+\frac{1}{(\log V_1)^{A}} \Big), 
$$
with 
\begin{align*}
K_2'&=\frac{4}{\pi}\sum_{d_1\in\NN} \frac{\chi( d_1)J_2(1;d_1) }{ d_1}
\cr
&=\prod_{p}\Big(1-\frac{\chi(p)}{p}\Big)
\sum_{\nu_1\geq 0}\frac{\chi( p^{\nu_1})\rho( p^{\nu_1},1)J_2(1;p^{\nu_1})
}{ p^{2\nu_1}}
\cr
&= \prod_{p}\Big(\frac{1-\chi(p)p^{-1}}{G_{p,C(x,1)}(1,\chi)}\Big)
\sum_{\nu_1\geq 0}\frac{\chi( p^{\nu_1}) 
}{ p^{2\nu_1}}\sum_{\nu_2\geq 0}\frac{\chi(p^{\nu_2}) \rho(
p^{\nu_1},p^{\nu_2}) }{p^{2\nu_2 }}\cr
&=\frac{\pi K'}{4
\vartheta(C(x,1);\chi)}.
\end{align*}
This completes the proof of the lemma in the remaining case $V_2\geq
V_1$.

It remains to say a few words about the case in which $L,C$ are not
primitive.  Suppose that $L =\ell_1L^*$ and $C=\ell_2C^*$ for 
primitive forms $L^*$ and $C^*$. Then it follows from \eqref{eq:mars}
that 
$$
S(V_1,V_2)=\sum_{h_i\mid \ell_i}\chi(h_1h_2)
S_{\frac{\ell_1}{h_1},\frac{\ell_2}{h_2}}\Big(\frac{V_1}{h_1},\frac{V_2}{h_2}\Big),
 $$
where the inner sum now involves  $L^*, C^*$ and for 
any $\ma{a}\in \NN^2$ we denote by $S_{\ma{a}}(V_1,V_2)$ the
corresponding sum in which $\gcd(d_i,a_i)=1$  in the
summation over $\ma{d}$.
 In our case $\ell_1$ and $\ell_2$ may be
viewed as absolute constants. Tracing through the argument above  
we are easily led to an estimate for   
$S_{\ma{a}}(V_1,V_2)$ that generalises the case $a_1=a_2=1$ that we
have already handled. 
Once inserted into the above this therefore suffices to handle the
case in which $L$ or $C$ is not primitive. 
\end{proof}

Combining Lemma \ref{SV1V2} with partial summation gives
 $$ 
S_{\pm,\pm}(X;\ma{k},\alpha)=X^2 \vol(\mcal{R}  )
\frac{\pi^2K'}{2^{8+k_1+k_2'}}+O\Big(\frac{X^2}{(\log X)^4}\Big).
$$
Bringing everything together in \eqref{SXsuma} and \eqref{defSX} 
we may now conclude that
$$
S (X)= \pi^2K  \vol(\mcal{R})  X^2   +
O\big( X^2(\log X)^{ -\eta+\varepsilon}\big),
$$
with
$$
K=K'\sum_{(k_0,k_1,k_2)\in \ZZ_{\geq 0}^3}\frac{n(k_1,k_2)}{2^{2k_0+k_1+k_2'+2}}
=K'K_2,
$$
by \eqref{calculK2}.  This completes the proof of Theorem \ref{mainS}.

\section{Passage to the intermediate torsors}\label{s:UT}

We are now ready to commence our proof of Theorem \ref{main}.
Recall the assumption in \eqref{eq:chat} that $a=-1$ and $f$  
has degree $3$ or $4$,  with an irreducible cubic factor without 
repeated roots.
Thus $
x_2^4f(\frac{x_1}{x_2})=L(\x)C(\x)$ with $L$ of
degree $1$ and $C$ of degree $3$.  We suppose that $L,C$ take the shape 
\eqref{eq:LC}, for appropriate $a,b,c_i\in \ZZ$.  
Let  $\delta=\sqrt{5\max\{|a,|b|,|c_i|\}}$.  
Then we will work with the norm 
$$
  \label{eq:norm}
\|\x\|=\max\{|x_0|,|x_1|,|x_2|,\delta^{-1}|x_3|, \delta^{-1}|x_4|\},
$$
in the definition of the exponential height function 
$H_4$ on $\PP^4(\QQ)$.

In what follows it will be convenient to use the notation $Z^m$ for
the set of primitive vectors in $\ZZ^m$.
Our starting point is \cite[Lemma 2]{tb}, which reveals that 
$$ 
N(B)=\frac{1}{4}\#\Big\{(y,z,t;u,v)\in Z^3\times Z^2: 
\begin{array}{l}
\|(v^2t,uvt,u^2t,y,z)\|\leq B,\\
y^2+z^2=t^2 L(u,v)C(u,v)
\end{array}
\Big\}.
$$
We denote by $\mathcal{T}\subset \AA^5=\Spec \QQ[y,z,t,u,v]$ the
subvariety defined by the equation 
\begin{equation}
  \label{eq:Teq}
  y^2+z^2=t^2 L(u,v)C(u,v),
\end{equation}
together with $(y,z,t)\neq \ma{0}$ and $(u,v)\neq \ma{0}$. Then
$\mathcal{T}$ is a $\mathbb{G}_m^2$-torsor over $X$. 
We have
$
\|(v^2t,uvt,u^2t,y,z)\|=\max\{u^2, v^2\}|t|,
$ 
by our choice of norm function,
for any $(y,z,t;u,v)$ under consideration.
Since there is no solution with $t=0$ we have
\begin{equation}
  \label{eq:tree}
  N(B)=\frac{1}{2}\#\Big\{(y,z,t;u,v)\in (Z^3\times Z^2)\cap \mathcal{T}: 
~0<\max\{u^2,v^2\}t\leq B
\Big\}.
\end{equation}
The overall contribution that arises from $(y,z,t;u,v)$ for which
$L(u,v)C(u,v)$ is zero is clearly $O(1)$, which is satisfactory. 

Let 
\begin{equation}\label{18-D}
\mathfrak{D}=\{d\in\NN: ~p\mid d \Rightarrow p\equiv 1 \bmod{4} \}
\end{equation}
and note that $d_0\in \mathfrak{D}$ for any $d_0\mid d$.
For $m,n\in \NN$ we let 
$$
r(n;m)=\#\{a,b \in \ZZ: n=a^2+b^2,~\gcd(m,a,b)=1\}.
$$
Then $r(n;1)=r(n)$ is the usual $r$-function and $r(y^2n;y)=0$ unless
$y\in \DD$.
Using the M\"obius function to detect the coprimality condition  
we obtain
$$
r(y^2n;y)=\sum_{\substack{k\mid y\\ k\in \DD}} \mu(k)r\Big(\frac{y^2n}{k^2}\Big),
$$
for any $y\in \mcal{D}$.  
Given any $\ve_1,\ve_2\in\{\pm 1\}$ and $T\geq 1$ we define the region
$$
R^{\ve_1,\ve_2}(T)=\Big\{(u,v) \in \RR^2: 
\begin{array}{ll}
|u|,|v|\leq \sqrt{T},\\
\ve_1L(u,v)>0, ~\ve_2C(u,v)>0
\end{array}
\Big\}.
$$
Applying the above it now follows that
$$
N(B)=\frac{1}{2}\sum_{k\in \DD} \mu(k)
\sum_{\substack{t\leq \frac{B}{k}\\ t \in \DD}}
\sum_{\substack{\ve_1,\ve_2 \in\{\pm 1\}\\ \ve_1\ve_2=1}}
\sum_{(u,v)\in Z^2\cap R^{\ve_1,\ve_2}(\frac{B}{kt})}r(t^2 L^+C^+),
$$
where we have written $L^+=\ve_1L$ and $C^+=\ve_2C$.

In what follows 
it will be convenient to write
$\omega(a_1,\ldots,a_k)=\omega(\gcd(a_1,\ldots,a_k))$, where
$\omega(n)=\sum_{p\mid n}1$.
We would now like to break the summand into a part involving $t^2$, a
part involving $L^+$ and a part involving $C^+$. For this we call upon
the following result, which is established along precisely the same
lines as \cite[Lemme 10]{L1L2Q}, where the analogous formula
for the divisor function is established.

\begin{lemma}\label{lem:r-123}
Let $n_1,n_2,n_3\in\NN$. Then we have 
$$
r(n_1 n_2 n_3)
=
\sum_{d_id_j\mid n_k} \frac{\chi(d_1d_2d_3)\mu(d_1)\mu(d_2d_3)}{2^{\omega(d_2d_3,n_2,n_3)+4}}
r\Big(\frac{n_1}{d_2d_3}\Big)
r\Big(\frac{n_2}{d_1d_3}\Big)r\Big(\frac{n_3}{d_1d_2}\Big),
$$
where the indices $\{i,j,k\}$ run over permutations of the set $\{1,2,3\}$.
\end{lemma}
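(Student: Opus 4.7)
The plan is to iterate a two-variable convolution identity for $r$. Since $r/4 = \chi \ast \mathbf{1}$ is multiplicative, I would first establish that
\begin{equation*}
r(mn) = \frac{1}{4}\sum_{d \mid \gcd(m, n)} \mu(d)\chi(d)\, r(m/d)\, r(n/d)
\end{equation*}
for all $m, n \in \NN$. Both sides are multiplicative in $(m, n)$, so this reduces to the prime-power case $m = p^a$, $n = p^b$, where $\mu$ restricts the sum to $d \in \{1, p\}$. The three cases $\chi(p) \in \{1, -1, 0\}$ are then verified directly using $r(p^j)/4 = j+1$, $\tfrac{1}{2}(1 + (-1)^j)$, or $1$ respectively.

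Next I would apply this two-variable identity with the split $m = n_1$, $n = n_2 n_3$ to obtain
\begin{equation*}
r(n_1 n_2 n_3) = \frac{1}{4}\sum_{D \mid \gcd(n_1, n_2 n_3)} \mu(D)\chi(D)\, r(n_1/D)\, r(n_2 n_3 / D).
\end{equation*}
The Möbius factor forces $D$ squarefree, so I would reparametrise $D = d_2 d_3$ with $d_3 \mid n_2$, $d_2 \mid n_3$, and $\gcd(d_2, d_3) = 1$. The crucial observation here is that each squarefree $D \mid n_2 n_3$ admits exactly $2^{\omega(\gcd(D, n_2, n_3))}$ such factorisations, since every prime of $D$ that divides both $n_2$ and $n_3$ may be assigned to either $d_2$ or $d_3$. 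Dividing by this overcount produces the denominator $2^{\omega(d_2 d_3, n_2, n_3)}$ in the statement, and rewrites the last factor as $r(n_2 n_3/D) = r((n_2/d_3)(n_3/d_2))$.

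Finally I would apply the two-variable identity a second time to this $r((n_2/d_3)(n_3/d_2))$, introducing a new variable $d_1 \mid \gcd(n_2/d_3, n_3/d_2)$ together with $\mu(d_1)\chi(d_1)$ and a further factor $\tfrac{1}{4}$. The two quarters combine to give the overall $2^{-4}$, while the divisibility conditions $d_2 d_3 \mid n_1$, $d_1 d_3 \mid n_2$, $d_1 d_2 \mid n_3$ are precisely $d_i d_j \mid n_k$ for all permutations $\{i,j,k\}$ of $\{1,2,3\}$; the Möbius and character factors collect as $\mu(d_1)\mu(d_2 d_3)\chi(d_1 d_2 d_3)$, matching the statement. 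The main technical obstacle is the middle step — the combinatorial bookkeeping needed to track decompositions $D = d_2 d_3$ in the presence of overlapping prime factors of $n_2$ and $n_3$ — which is exactly the origin of the otherwise mysterious exponent $\omega(d_2 d_3, n_2, n_3)$ in the denominator.
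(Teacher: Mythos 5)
Your proof is correct, and it follows the route the paper itself indicates: the paper gives no proof but refers to \cite[Lemme 10]{L1L2Q}, where the analogous identity for the divisor function is obtained by exactly this kind of iterated two-variable convolution, with the factor $2^{-\omega(d_2d_3,n_2,n_3)}$ arising from the same overcounting of decompositions $D=d_2d_3$. Your verification of the base identity $r(mn)=\tfrac14\sum_{d\mid\gcd(m,n)}\mu(d)\chi(d)r(m/d)r(n/d)$ at prime powers and the bookkeeping of the factor $2^{-4}$ are both sound.
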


Applying Lemma \ref{lem:r-123}, we conclude that
$$
r(t^2 L^+C^+)=\sum_{d_1d_2\mid t^2}
\sum_{\substack{d_1d_3\mid L\\ d_2d_3\mid C}} 
\frac{\chi(d_1d_2d_3)\mu(d_3)\mu(d_1d_2)}{2^{\omega(d_1d_2,L,C)+4}}
r\Big(\frac{t^2}{d_1d_2}\Big)
r\Big(\frac{L^+}{d_1d_3}\Big)r\Big(\frac{C^+}{d_2d_3}\Big),
$$
Write $d=d_1d_2$ and note that $d\mid t$ for any value of $d$ producing
a non-zero summand. In particular we will only be interested in values
of $d\in \mathfrak{D}$, so that $\chi(d)=1$.  Writing $t=ds$, we
deduce that  
\begin{align*}
N(B)
&=\frac{1}{2^5}\sum_{\substack{dk\leq B\\ d,k \in \DD}} \mu(d)\mu(k) 
\sum_{\substack{s \leq \frac{B}{dk}\\ s \in \DD}} r(ds^2)
\sum_{\substack{\dd\in\NN^3\\ d=d_1d_2}} \chi(d_3)\mu(d_3)
\SS_{\dd}\Big(\frac{B}{dsk}\Big),
\end{align*}
where 
$$
\SS_{\dd}(T)=
\sum_{\substack{\ve_1,\ve_2 \in\{\pm 1\}\\ \ve_1\ve_2=1}}
\sum_{\substack{(u,v)\in Z^2\cap R^{\ve_1,\ve_2}(T)\\ d_1d_3\mid L, ~d_2d_3\mid C}}
\frac{
r(\frac{L^+}{d_1d_3})r(\frac{C^+}{d_2d_3})}{
2^{\omega(d,L, C)}},
$$
for any $T\geq 1$.  Now the inner sum vanishes unless $d_3\mid
\gcd(L(u,v),C(u,v)),$ with $(u,v)$ a primitive integer vector.
In particular it follows that $d_3\mid \Delta$, 
the resultant of $L$ and $C$, whence $d_3=O(1)$. 

For given $d\in\NN$ we let
\begin{equation}\label{16-f}
f_d(n)=\sum_{n=ab}\mu(a)r(db^2).
\end{equation}
We may now write 
$$
N(B)
=\frac{1}{2^5}\sum_{\substack{dn\leq B\\ d,n \in \DD}}
\mu(d)f_d(n)
\sum_{\substack{\dd\in\NN^3\\ d=d_1d_2\\ d_3 \mid \D}} \chi(d_3)\mu(d_3)
\SS_{\dd}\Big(\frac{B}{dn}\Big).
$$
Recycling the observation that any common divisor of $L(u,v)$ and
$C(u,v)$ must divide $\Delta$, we obtain 
\begin{align*}
\SS_{\dd}(T)&=
\sum_{\substack{\ve_1,\ve_2 \in\{\pm 1\}\\ \ve_1\ve_2=1}}
\sum_{k\mid \gcd(\D,d)}\frac{1}{2^{\omega(k)}} 
\sum_{\substack{(u,v)\in Z^2\cap R^{\ve_1,\ve_2}(T)\\ d_1d_3\mid L,
    ~d_2d_3\mid C\\ k=\gcd(d,L,C)}}
r\Big(\frac{L^+}{d_1d_3}\Big)r\Big(\frac{C^+}{d_2d_3}\Big)\\
&=\sum_{\substack{\ve_1,\ve_2 \in\{\pm 1\}\\ \ve_1\ve_2=1}}
\sum_{kk'\mid \gcd(\D,d)}\frac{\mu(k')}{2^{\omega(k)}} 
\sum_{\substack{(u,v)\in Z^2\cap R^{\ve_1,\ve_2}(T)\\ [d_1d_3,kk']\mid L\\
[d_2d_3,kk']\mid C}}
r\Big(\frac{L^+}{d_1d_3}\Big)r\Big(\frac{C^+}{d_2d_3}\Big).
\end{align*}
Finally, we wish to remove the coprimality condition on $(u,v)$ using
the M\"obius function.  
Let us define 
\begin{equation}\label{eq:ell}
L_\ell=\ell L^+=\ell \ve_1 L, \quad
C_\ell=\ell^3 C^+=\ell^3 \ve_2 C
\end{equation}
for any $\ell \in \NN$. It follows  
that the inner sum over $(u,v)$ is equal to
$\sum_{\substack{\ell\leq \sqrt{T}}} \mu(\ell)
\UU(\ell^{-2}T),$
where if 
$\kk=(k,k')$ then
\begin{equation}
  \label{eq:UU}
\UU(T)=\UU_{\dd,\kk,\ell}^{\ve_1,\ve_2}(T)=
\sum_{\substack{(x,y)\in \ZZ^2\cap R^{\ve_1,\ve_2}(T)\\
    [d_1d_3,kk']\mid L_\ell\\ 
[d_2d_3,kk']\mid C_\ell}}
r\Big(\frac{L_\ell(x,y)}{d_1d_3}\Big)
r\Big(\frac{C_\ell(x,y)}{d_2d_3}\Big).
\end{equation}
We may summarise our investigation as follows.

\begin{lemma}\label{N1-first}
There exists an absolute constant $c>0$ such that
\begin{align*}
N(B)
=
\frac{1}{2^5}\sum_{\ell=1}^\infty\mu(\ell)
\sum_{d \in \DD} \mu(d)
&\sum_{\substack{n\leq N\\ n \in \DD}} f_d(n) 
\sum_{\substack{\ve_1,\ve_2 \in\{\pm 1\}\\ \ve_1\ve_2=1}}
\\
&\times
\sum_{\substack{\dd\in\NN^3\\ d=d_1d_2\\ d_3 \mid \D}}
\chi(d_3)\mu(d_3)
\UU\Big(\frac{B}{d\ell^2 n}\Big),
\end{align*}
where $N=\frac{cB}{d^{\frac{5}{4}}\ell}$ and 
$\UU(T)=\UU_{\dd,\kk,\ell}^{\ve_1,\ve_2}(T)$ is given  by \eqref{eq:UU}.
\end{lemma}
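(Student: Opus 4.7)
The plan is to carry out, in sequence, the bookkeeping manipulations already indicated in Section~\ref{s:UT}: passage to the torsor, factorisation of the $r$-function via Lemma~\ref{lem:r-123}, and a chain of M\"obius inversions to remove coprimality conditions. My starting point is the formula \eqref{eq:tree} for $N(B)$ obtained from \cite[Lemma~2]{tb}, in which the chosen norm reduces the height inequality to $\max\{u^2,v^2\}\,t\leq B$.

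First I would unpack the torsor equation \eqref{eq:Teq}: for fixed $(t;u,v)$ the number of primitive $(y,z)$ with $y^2+z^2=t^2L(u,v)C(u,v)$ is $r(t^2L^+C^+;t)$ after distinguishing the two sign combinations satisfying $\ve_1\ve_2=1$, where $L^+=\ve_1 L$ and $C^+=\ve_2 C$. The M\"obius inversion for the coprimality condition $\gcd(t,y,z)=1$, combined with the fact that $r(y^2n;y)$ vanishes unless $y\in\DD$, produces an outer sum over $k\in\DD$ and forces $t\in\DD$.

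The central algebraic step is the application of Lemma~\ref{lem:r-123} to $r(t^2L^+C^+)$, introducing the triple $\dd=(d_1,d_2,d_3)$ and the weight $\chi(d_1d_2d_3)\mu(d_3)\mu(d_1d_2)/2^{\omega(d_1d_2,L,C)+4}$. Setting $d=d_1d_2$, the divisibility $d\mid t^2$ together with squarefreeness of $t\in\DD$ forces $d\in\DD$ and $d\mid t$, so I would substitute $t=ds$ and recognise the emerging double sum over $k\in\DD$ and $s\in\DD$ as a Dirichlet convolution; this produces precisely the function $f_d$ of \eqref{16-f} evaluated at $n=ks$, collapsing the $(k,s)$-summation into a single variable $n$.

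The final step is to digest the inner sum $\SS_\dd(T)$. I would isolate $k=\gcd(d,L(u,v),C(u,v))$, which must divide $\Delta=|\Res(L,C)|$ and hence is bounded; a second M\"obius inversion in $k'$ converts the equality $k=\gcd$ into the divisibilities $[d_1d_3,kk']\mid L$ and $[d_2d_3,kk']\mid C$, and a third M\"obius sum over $\ell=\gcd(u,v)$ strips off the primitivity of $(u,v)$, rescaling $L,C$ to the forms $L_\ell,C_\ell$ of \eqref{eq:ell} and the region $R^{\ve_1,\ve_2}$ by $\ell^{-2}$; the resulting object is exactly $\UU$ as in \eqref{eq:UU}. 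The one genuinely non-formal point is the truncation $n\leq N=cB/(d^{5/4}\ell)$: I expect this to follow from a short size argument showing that once $n$ exceeds this threshold the region $R^{\ve_1,\ve_2}(B/(d\ell^2 n))$ is too small to contain any $(x,y)$ simultaneously meeting the divisibility constraints $[d_1d_3,kk']\mid L_\ell(x,y)$ and $[d_2d_3,kk']\mid C_\ell(x,y)$ and the positivity of $L,C$, so that $\UU$ vanishes there and the truncated sum agrees with the untruncated one. The main obstacle I anticipate is not any single step but rather keeping the signs, the $2$-adic parities, and the three overlapping M\"obius weights correctly aligned, and verifying the vanishing claim that legitimises the truncation at $N$.
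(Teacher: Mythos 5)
Your proposal is correct and follows essentially the same route as the paper: the body of \S\ref{s:UT} carries out exactly the chain you describe (torsor count, $r(t^2LC;t)$ with M\"obius over $k\in\DD$, Lemma~\ref{lem:r-123}, the substitution $t=ds$ and the convolution $f_d(n)$ with $n=ks$, then the $\kk=(k,k')$ inversion for $\gcd(d,L,C)$ and the $\ell$-inversion for primitivity of $(u,v)$), after which the lemma holds trivially with $N=B/(d\ell^2)$. The one step you leave sketched is precisely the paper's actual proof of the sharper truncation: positivity and the divisibilities force $d_1d_3\le L_\ell(\x)\ll \ell\sqrt{T}$ and $d_2d_3\le C_\ell(\x)\ll \ell^3T^{3/2}$, hence $\UU(T)=0$ unless $d=d_1d_2\ll \ell^4T^2$, and taking $T=B/(d\ell^2n)$ gives $d^{3/2}n\ll B$, which combined with $d\ell^2n\le B$ yields $d^{5/4}\ell n\ll B$, i.e.\ $n\le cB/(d^{5/4}\ell)$.
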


\begin{proof}
In view of our preceding manipulations, the statement of the lemma
is obviously true with $N=\frac{B}{d\ell^2}$ in the summation over
$n$. To see that we may take $N=\frac{cB}{d^{\frac{5}{4}}\ell}$ for some absolute
constant $c>0$, we observe that $\UU(T)=0$ unless $d_{1}\ll
\ell^3T^{\frac{3}{2}}$ and $d_2\ll \ell T^{\frac{1}{2}}$.
Taking $T=\frac{B}{d\ell^2 n}$, it follows that
$
d=d_1d_2\ll \frac{B^2}{d^2n^2},
$
whence $d^{\frac{3}{2}} n\ll B$. But we also have $d\ell^2n\leq B$,
whence in fact
$
d^{\frac{5}{4}} \ell n\ll B,
$ 
as required.
\end{proof}

The groundwork is now laid for an investigation of $\UU(T)$ for appropriate
values of the parameters. In effect, the thrust of this section has
been concerned with passing from solutions of a single
equation $y^2+z^2=t^2L(u,v)C(u,v)$, to solutions of 
$$
\ell L(u,v)=\delta_1(y_1^2+z_1^2), \quad \ell^3 C(u,v)=\delta_2(y_2^2+z_2^2), 
$$
for varying $\delta_1,\delta_2\in\ZZ$. 
This corresponds to a simple descent process and the
pair  of equations defines an intermediate torsor above the
Ch\^atelet surface $X$.

\section{Analysis of $\UU(T)$}

In this section we will study  
$\UU(T)=\UU_{\dd,\kk,\ell}^{\ve_1,\ve_2}(T)$, 
as given by \eqref{eq:UU}. 
We will work with the sets
\begin{align*}
\mathsf{\Lambda}({\ma{D}})
&=\mathsf{\Lambda}({\ma{D}};L,C)=  \{ \x \in
\ZZ^2 : D_1 \mid L (\x), ~ D_2\mid C(\x)\}\\
\mathsf{\Lambda}^*({\ma{D}})
&=\mathsf{\Lambda}^*({\ma{D}};L,C)=  \{ \x \in
\mathsf{\Lambda}({\ma{D}};L,C): \gcd(D_1D_2,\x)=1\},
\end{align*}
for any $\ma{D}\in \NN^2$.
Let us write 
$$
e_1=d_1d_3, \quad e_2=d_2d_3, \quad 
E_1=[d_1d_3,kk'], \quad E_2=[d_2d_3,kk'].
$$
Clearly $e_i,E_i$ are all odd and $e_i \mid E_i$.
Let $\R=R^{\ve_1,\ve_2}(1)$, so that
$\sqrt{T}\R=R^{\ve_1,\ve_2}(T)$. 
We may therefore write 
$$
\UU(T)=\sum_{\substack{\x\in \sfl(\mathbf{E};L_\ell,C_\ell)\cap
    \sqrt{T}\R}}
r\Big(\frac{L_\ell(\x)}{e_1}\Big)r\Big(\frac{C_\ell(\x)}{e_2}\Big),
$$
where $L_\ell, C_\ell$ are given by \eqref{eq:ell}. 
Ultimately we wish to apply Theorem \ref{mainS} to estimate this sum.
However the latter result involves a sum over points of $\ZZ^2$ rather
than points of $\sfl(\mathbf{E};L_\ell,C_\ell)$. 
We will circumvent this difficulty with a  change of variables. 

The first task is to restrict attention to the case in which each
$E_1$ (resp. $E_2$) is coprime to the coefficients of $L_\ell$ (resp. $C_\ell$).
We let $\ell_1,\ell_2 \in \NN$ and $L^*, C^*  $ be primitive forms
such that
$L_\ell =\ell_1L^*$ and $C_\ell=\ell_2C^*.$
In particular $\ell \mid \ell_1, \ell^3\mid \ell_2$ and
$\ell^{-1}\ell_1, \ell^{-3}\ell_2\ll 1$. 
Then 
$ 
\mathsf{\Lambda}(\ma{E};L_\ell,C_\ell)=\mathsf{\Lambda}(\ma{E'};L^*,C^*),$
with 
$$
E_1'=\frac{E_1}{\gcd(E_1,\ell_1)},\quad E_2'=\frac{E_2}{\gcd(E_2,\ell_2)}.
$$
Define the function $\psi:\NN^2\rightarrow\NN$ multiplicatively via
$$
\psi(p^{\alpha_1}, p^{\alpha_2}) =
p^{\max\{\alpha_1,\lceil{\frac{\alpha_2}{3}}\rceil\}}.  
$$
An analysis of what goes on at prime powers easily leads to the
conclusion that 
$$
\mathsf{\Lambda}({\ma{E'}};L^*,C^*)= \bigsqcup_{h\mid
\psi(\ma{E'})}\mathsf{\Lambda}^*({\ma{E''}};L^*,C^*)
=\bigsqcup_{h\mid
\psi(\ma{E'})}\mathsf{\Lambda}^*({\ma{E''}}),
$$  
where 
\begin{equation*} 
E_1''=\frac{E_1'}{\gcd(E_1',h)}, \quad E_2''=\frac{E_2'}{\gcd(E_2',h^3)}.
\end{equation*}
It follows that
$$
\UU(T)=
\sum_{h\mid
\psi(\ma{E'})}
\sum_{\substack{\x\in 
\mathsf{\Lambda}^*({\ma{E''}})\cap 
    b^{-1}\sqrt{T}\R}}
r\Big(\frac{L^*(\x)}{e_1'}\Big)r\Big(\frac{C^*(\x)}{e_2'}\Big),
$$
where 
$$
e_1'=\frac{e_1}{\gcd(e_1,h)},\quad e_2'=\frac{e_2}{\gcd(e_2,h^3)}.
$$
We let $e'=e_1'e_2'$, $E'=E_1'E_2'$ and  $E''=E_1''E_2''$.

In $\mathsf{\Lambda}^*({\ma{E''}})$ we define an equivalence relation
$\x\sim\y$ if and only if there exists $\lambda\in \ZZ$ such that
$$
\x \equiv \lambda  \y \bmod{E''}.
$$ 
Note that any such $\lambda$ must be coprime to $E''$.
This   relation allows us to  partition $\mathsf{\Lambda}^*({\ma{E''}})$ 
into disjoint equivalence classes.  We denote by $\cU({\ma{E''}})$ the
set of these equivalence classes. We claim that
\begin{equation} 
  \label{eq:number}
  \#\cU({\ma{D}}) \ll (D_1D_2D_3)^\ve
\end{equation}
for any $\ma{D}\in \NN^2$. 
To see this we note that 
$$
\#\cU({\ma{D}})=\frac{\rho^*(\ma{D})}{\phi(D_1D_2)}=
\prod_{p^{\nu_i}\| D_i} \frac{\rho^*(p^{\nu_1},p^{\nu_2})}{\phi(p^{\nu_1+\nu_2})},
$$
where $\rho^*(\ma{D})=\rho^*(\ma{D};L^*,C^*)$ is given
multiplicatively as in 
\eqref{eq:star}.  Applying \eqref{eq:linden} we easily deduce
\eqref{eq:number}.

 When $\y \in \cA$ for  $\cA \in
\cU({\ma{E''}})$, we have
$$
\cA = \{\x \in \ZZ^2 : \x \equiv \lambda \y \bmod{E''} \mbox{ with
  $\lambda\in\ZZ$ and $\gcd(\lambda, E'') = 1$}\}.
$$ 
When $\cA \in \cU({\ma{E''}})$ and  $\y_0\in\cA$, we set 
$$
G(\cA) = \{ \x \in \ZZ^2 : \exists \lambda \in \ZZ \mbox{~such that
$\x \equiv \lambda \y_0 \bmod{E''}$} \}.
$$
This defines a sublattice of $\ZZ^2$ of rank $2$
and determinant $E''$. Moreover the definition is independent of $\y_0$.
We conclude that
\begin{equation}
  \label{eq:nebula}
\UU(T)= 
\sum_{h\mid \psi(\ma{E'})}\sum_{\cA\in \cU(\ma{E''})}
\sum_{e\mid E''}\mu(e)S(T,\cA,e)
\end{equation}
where 
$$ 
S(T,\cA,e)=\sum_{\x\in G_e(\cA) \cap
h^{-1}\sqrt{T}\R }
r\Big(\frac{L^*(\x)}{e_1'}\Big)r\Big(\frac{C^*(\x)}{e_2'}\Big),
$$
with 
\begin{align*}
G_e(\cA)&=G (\cA)\cap \{ \x\in \ZZ^2:  e\mid \x\}
= \{ \x\in \ZZ^2: \mbox{$\exists a \in e\ZZ$ s.t. 
$\x \equiv  a \y_0 \bmod{E''}$}  \}.
\end{align*}
We have therefore arrived at summation conditions running over a
lattice $G_e(\cA)$ of determinant 
\begin{equation}
  \label{eq:det-lattice}
\det G_e(\cA)=eE''\gg \frac{de}{\gcd(d,h\ell)}. 
\end{equation}
We are now led to make a change of variables $\x=\mathbf{M}\v$ for any 
$\x\in G_e(\cA)$, where $\mathbf{M}=(\ma{m}_1,\ma{m}_2)$  
is the matrix formed from a minimal basis for the lattice. 
In particular if $s_1\leq s_2$ are the successive minima of $
G_e(\cA)$ with respect to the norm $|\cdot|$, then $s_i=|\ma{m}_i|$
for $i=1,2$ and $s_1s_2$ has order of magnitude $eE''$.
Moreover,  according to Davenport's work in the geometry of numbers \cite[Lemma
5]{dav}, we will have $v_i\ll s_i^{-1}|\x|$ whenever  $\x\in G_e(\cA)$ is
written as $\x= v_1 \mathbf{m}_1+v_2\mathbf{m}_2$.
On defining the region $\R_{\mathbf{M}}=\{
\v\in\RR^2: \mathbf{M}\v\in h^{-1}\R\}$, we observe that
\begin{equation}\label{eq:recall-vol}
\vol( \R_{\mathbf{M}})=\frac{\vol( \mcal{R})}{
h^2|\det \mathbf{M}|}= \frac{\vol( 
\mcal{R})}{h^2eE''}.
\end{equation}
We may now write
\begin{equation}
  \label{eq:shoe}
S(T,\cA,e)=
\sum_{\v\in \ZZ^2\cap  \sqrt{T}\mcal{R}_{\ma{M}} }
r(M_1(\v))r(M_2(\v)) 
\end{equation}
with 
$$
M_1(\v)=\frac{L^*({\ma{M}}\v)}{e_1'},\quad
  M_2(\v)=\frac{C^*({\ma{M}}\v)}{e_2'}.
$$

Our analysis of $S(T,\cA,e)$ will now involve two aspects: a uniform
upper bound and an asymptotic formula.  In the
first instance, therefore, we require an upper bound for this sum which is 
uniform in $d=d_1d_2$ and $\ell$. 
Our principal tool will be previous work of the 
authors \cite{nair}, which is concerned with the average order of
arithmetic functions ranging over the values taken by binary forms.
As usual we will allow all of our implied constants to depend upon the
coefficients of the forms $L$ and $C$. 
In particular we have $d_3\ll 1$.
We will establish the following result.

\begin{lemma}\label{lem:UU-upper} 
Let $\ve>0$ and let $d$ be square-free. Then we have
$$
\UU(T) \ll (d \ell)^{\ve} \hcf(d,\ell)\Big(
\frac{T}{d}+T^{\frac{1}{2}+\ve}\Big).
$$
\end{lemma}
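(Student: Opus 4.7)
The plan begins from the decomposition \eqref{eq:nebula}, which writes $\UU(T)$ as a triple sum over $h\mid \psi(\ma{E'})$, $\cA\in\cU(\ma{E''})$ and $e\mid E''$, with summands $\mu(e)S(T,\cA,e)$. After the change of variables $\x=\ma{M}\v$ described before \eqref{eq:shoe}, each $S(T,\cA,e)$ becomes a sum of $r(M_1(\v))r(M_2(\v))$ over $\v\in\ZZ^2\cap\sqrt{T}\mcal{R}_\ma{M}$, where by \eqref{eq:recall-vol} the region has volume $\vol(\mcal{R})T/(h^2 eE'')$, and the forms $M_1,M_2$ inherit primitivity from $L^*,C^*$ up to bounded constants.

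The core analytic step is an application of the Nair--Tenenbaum type upper bound developed by the authors in \cite{nair} to the sum $S(T,\cA,e)$. With implied constants depending only on the coefficients of $L$ and $C$, this yields a bound of the shape
$$
S(T,\cA,e)\ll T^{\ve}\Big(\frac{T}{h^{2}eE''}+T^{1/2}\Big),
$$
the first term being the expected main contribution (the volume of the region times local densities, absorbed into $T^{\ve}$) and the $T^{1/2+\ve}$ term accounting for lattice points near the boundary. The local control at primes dividing $\Delta$, $c_{0}\Delta'$ or $\ell$ is furnished by Lemma \ref{rhopremier}, and the successive minima of $G_{e}(\cA)$ are handled in the standard Davenport fashion so as to ensure uniformity.

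It remains to sum the triples. Since $d_{3}\ll 1$ and $kk'\ll 1$, the numbers $E_{1},E_{2},E',E''$ are all $\ll d^{1+\ve}$, so $\sum_{e\mid E''}1/e\ll d^{\ve}$, and the number of available $h$ is $\ll d^{\ve}$, while \eqref{eq:number} gives $\#\cU(\ma{E''})\ll d^{\ve}$. The key input is \eqref{eq:det-lattice}, which gives $eE''\gg de/\hcf(d,h\ell)$. Writing $d=\hcf(d,\ell)d'$, so that $\hcf(d,h\ell)=\hcf(d,\ell)\hcf(d',h)$, a routine computation yields
$$
\sum_{h\geq 1}\frac{1}{h^{2}E''}\ll \frac{\hcf(d,\ell)}{d}\sum_{h\geq 1}\frac{\hcf(d',h)}{h^{2}}\ll \frac{\hcf(d,\ell)\,d^{\ve}}{d}.
$$
Multiplying by $T^{1+\ve}$ reproduces the main term $(d\ell)^{\ve}\hcf(d,\ell)T/d$, and collecting the boundary terms $T^{1/2+\ve}$ over the $\ll d^{\ve}$ triples produces $(d\ell)^{\ve}T^{1/2+\ve}$, which is absorbed in the second term of the claimed bound.

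The principal obstacle is securing the Nair--Tenenbaum bound for $S(T,\cA,e)$ with adequate uniformity: the implicit constant must depend neither on the determinant of the lattice $G_{e}(\cA)$ nor on the scaled forms $M_{1},M_{2}$, and the local estimates at bad primes have to be controlled by square-freeness of $d$ together with the divisor bounds in Lemma \ref{rhopremier}. The reduction to primitive forms (via $L^*,C^*$) and the passage to $\ZZ^{2}$ through the minimal basis $\ma{M}$ are precisely what make the estimate from \cite{nair} applicable; without these preliminary reductions one would face the twin difficulties of a non-diagonal lattice and a non-primitive binary form simultaneously.
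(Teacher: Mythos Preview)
Your approach mirrors the paper's: start from \eqref{eq:nebula}, pass to $\ZZ^2$ via the minimal basis $\ma{M}$, bound $S(T,\cA,e)$ by the authors' uniform Nair--Tenenbaum result \cite[Corollary~1]{nair}, and then sum over $h,\cA,e$ using \eqref{eq:number} and \eqref{eq:det-lattice}. The structure is right, but two points need correcting.

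First, the factor $T^{\ve}$ in your bound for $S(T,\cA,e)$ should be $(d\ell)^{\ve}$. The forms $M_1,M_2$ have coefficients of size governed by $\|\ma{M}\|$, which is polynomial in $d,\ell$; the uniformity in \cite{nair} produces a $(\text{height of form})^{\ve}$ factor, not a $T^{\ve}$ factor. As written your bound is not uniform in $d,\ell$ for small $T$, and the lemma has no hypothesis linking $T$ to $d,\ell$.

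Second, and more substantively, you gloss over the step that actually makes the main term bounded. The Nair--Tenenbaum estimate gives a main term $V_1V_2 E$ where $E$ is an Euler product of local densities of the shape $\prod_{p\leq V_2}\big(1+\chi(p)\rho_{C(x,1)}(p)/p\big)$; this is \emph{not} $O(1)$ by size considerations alone, since $r$ has average order $\asymp\log$. The paper bounds $E\ll (d\ell)^{\ve}$ by invoking Lemma~\ref{dedekind}, i.e.\ the convergence of $\sum_p \chi(p)\rho_{C(x,1)}(p)/p$, which in turn rests on the analytic properties of the Hecke $L$-function attached to the cubic field. Your appeal to Lemma~\ref{rhopremier} handles only finitely many bad primes and does not give this; without Lemma~\ref{dedekind} the product $E$ could be as large as $\log T$ and the argument fails. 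Once you replace $T^{\ve}$ by $(d\ell)^{\ve}$ and insert the bound $E\ll (d\ell)^{\ve}$ from Lemma~\ref{dedekind}, your summation over $h,\cA,e$ goes through exactly as in the paper.
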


\begin{proof}
Let $r_2(n)$ be defined multiplicatively via
$$
r_2(p^j)=
\begin{cases}
1+\chi(p), & \mbox{if $j=1$ and 
$p \nmid 6d d_3\D\D'\ell$,}\\  
(1+j)^2, & \mbox{otherwise},
\end{cases}
$$
where $\Delta, \Delta'$ are as in \eqref{defDelta}.
It follows from \eqref{eq:shoe} that
$$
S(T,\cA,e)\leq 2^4
\sum_{\substack{\v\in \ZZ^2\\
v_1\ll V_1, ~v_2\ll V_2}}
r_2\big(M_1(\v)M_2(\v)\big),
$$
where 
$V_i= (hs_i)^{-1}\sqrt{T}$ for $i=1,2$.

It is obvious that $r_2$ belongs to the class of 
non-negative arithmetic functions considered in \cite{nair}. 
An application of \cite[Corollary 1]{nair}
therefore  reveals that  
\begin{align*}
S(T,\cA,e)
\ll
(d\ell)^\ve (V_1V_2E + V_1^{1+\ve})
&\ll (d \ell)^\ve   \Big(\frac{T}{h^2s_1s_2}E +
\frac{T^{\frac{1}{2}+\ve}}{h s_1}\Big),
\end{align*}
for any $\ve>0$,
where
$$
E=\prod_{p\leq V_2}\Big(1+\frac{\varrho_{M_2(x,1)}(p)\chi(p)}{p}\Big).
$$
It follows from Lemma \ref{dedekind} that
$E\leq A^{\omega(d \ell)} \ll (d\ell)^\ve$ for an
  appropriate constant $A\geq 1$. 
Recalling that $s_1s_2\gg ee''$, 
we therefore conclude from \eqref{eq:det-lattice} that
\begin{align*}
S(T,\cA,e)
&\ll (d \ell)^\ve   \Big(\frac{T\gcd(d,h\ell)}{deh^2} +
\frac{T^{\frac{1}{2}+\ve}}{h}\Big).
\end{align*}
Inserting this into 
\eqref{eq:nebula} now yields
\begin{align*}
\UU(T)
&\ll (d \ell)^\ve 
\sum_{h\mid \psi(\ma{e'})}\frac{\gcd(d,h)}{h}
\#\cU(\ma{e''})
\Big(\frac{T\gcd(d,\ell)}{d} +
T^{\frac{1}{2}+\ve}\Big)\\
&\ll (d \ell)^{\ve} \hcf(d,\ell)\Big(
\frac{T}{d}+T^{\frac{1}{2}+\ve}\Big).
\end{align*}
by \eqref{eq:number}. This completes the proof of Lemma~\ref{lem:UU-upper}.
\end{proof}

We now turn to an asymptotic formula for $\UU(T)=\UU_{\dd,\kk,\ell}^{\ve_1,\ve_2}(T)$, as
given by \eqref{eq:nebula} and \eqref{eq:shoe}. Whereas in the previous
lemma we sought 
uniformity in $d=d_1d_2$ and
$\ell$, we will 
now allow all of our implied constants to depend in any way upon
$d,\ell$ and the coefficients of $L$ and $C$. 
It is clear that 
$\mcal{R}_{\ma{M}} $ and $M_1, M_2$ satisfy the necessary conditions
for an application of Theorem \ref{mainS}. 
Put
$$ 
K_p(\mathbf{M})=  
\Big(1-\frac{\chi(p)}{p}\Big)^2\sum_{\nu_1,\nu_2\geq 0} 
 \frac{\chi(p^{\nu_1+\nu_2}) \rho(p^{\nu_1},p^{\nu_2};M_1,M_2 ) }{p^{2\nu_1+2\nu_2 }}
$$
for $p>2$ and 
$$
K_2(\mathbf{M})=
4\lim_{n\to \infty}2^{-2n}   
\#\left\{\x \in (\ZZ/2^n
\ZZ)^2: 
\begin{array}{l}  M_1(\x)\in  \mcal{E}\bmod{2^n}  \\   M_2(\x)\in
 \mcal{E}\bmod{2^n}\end{array}\right\}.
$$
Then once combined with \eqref{eq:nebula} and \eqref{eq:recall-vol}, Theorem \ref{mainS} leads to
the following result.

\begin{lemma}\label{rlc:main} 
  Let $\varepsilon>0$. Then we have
\begin{align*}
\UU(T)=
\pi^2W^{\ve_1,\ve_2}(\bd,\kk,\ell) \vol(R^{\ve_1,\ve_2}(1))T +
O\big(T(\log T)^{-\eta+\varepsilon}\big), 
\end{align*}
where the implied constant depends on $d,\ell,L,C$, and 
$$ 
W^{\ve_1,\ve_2}(\bd,\kk,\ell)=
\sum_{h\mid \psi(\ma{E'})}\sum_{\cA\in \cU(\ma{E''})}
\sum_{e\mid E''}\frac{\mu(e)}{h^2eE''}
\prod_p K_p(\ma{M}).
$$
\end{lemma}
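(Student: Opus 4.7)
The plan is to apply Theorem~\ref{mainS} to each inner sum $S(T, \cA, e)$ from \eqref{eq:shoe} appearing in the decomposition \eqref{eq:nebula}, then sum the resulting asymptotics over $h, \cA, e$. The sum in \eqref{eq:shoe} over $\v \in \ZZ^2 \cap \sqrt{T}\mcal{R}_{\ma{M}}$ weighted by $r(M_1(\v))\,r(M_2(\v))$ is precisely of the form to which Theorem~\ref{mainS} applies, with $X = \sqrt{T}$, region $\mcal{R}_{\ma{M}}$, linear form $M_1$ and cubic form $M_2$.

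Before applying Theorem~\ref{mainS} I would verify its hypotheses. Integrality of $M_1, M_2$ on $\ZZ^2$ follows from $G_e(\cA) \subset \mathsf{\Lambda}^*(\ma{E''})$ together with the divisibilities $e_i' \mid E_i''$, which are traceable from the definitions of $e_i, E_i, \ell_i, E_i', E_i'', e_i'$. Irreducibility of $M_2$ over $\QQ$ is preserved under the invertible change of variables $\v \mapsto \ma{M}\v$ (since $\det \ma{M} \neq 0$) and scalar division by a non-zero integer; likewise $M_1$ remains a non-zero linear form. Positivity of $M_1, M_2$ on $\mcal{R}_{\ma{M}}$ follows from the construction of $R^{\ve_1, \ve_2}(1)$ and the proportionalities $L^* \propto \ve_1 L$, $C^* \propto \ve_2 C$ with positive scalar (since $\ell_1, \ell_2 \in \NN$ and $\ell \mid \ell_1$, $\ell^3 \mid \ell_2$). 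The boundary $\partial \mcal{R}_{\ma{M}}$ is a linear image of $h^{-1} \partial R^{\ve_1, \ve_2}(1)$ whose length is bounded in terms of $d, \ell, L, C$, which is permitted since the implied constants may depend on all of these.

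Applying Theorem~\ref{mainS} with $X = \sqrt{T}$ then yields
$$
S(T, \cA, e) = \pi^2 T \vol(\mcal{R}_{\ma{M}}) \prod_p K_p(\ma{M}) + O\big(T (\log T)^{-\eta + \varepsilon}\big),
$$
and by \eqref{eq:recall-vol} we have $\vol(\mcal{R}_{\ma{M}}) = \vol(R^{\ve_1, \ve_2}(1))/(h^2 e E'')$. Inserting into \eqref{eq:nebula} and collecting terms produces the main term stated in the lemma, with $W^{\ve_1, \ve_2}(\bd, \kk, \ell)$ matching the displayed formula exactly. The accumulated error remains of the stated order because the number of triples $(h, \cA, e)$ is bounded in terms of $d$ and $\ell$: both $h$ and $e$ run over divisors of quantities controlled by $d, \ell$, while $\#\cU(\ma{E''})$ is controlled by \eqref{eq:number}.

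The main potential obstacle, and essentially the only non-routine step, is the divisibility bookkeeping required to confirm that $M_1, M_2$ take integer values on $\ZZ^2$, i.e.\ that $e_i' \mid E_i''$ after unwinding all of the definitions. Once this has been settled, the rest is a mechanical application of Theorem~\ref{mainS} followed by a finite recombination of the resulting pieces.
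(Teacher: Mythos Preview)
Your proposal is correct and follows essentially the same approach as the paper: apply Theorem~\ref{mainS} to each $S(T,\cA,e)$ in \eqref{eq:shoe}, substitute the volume via \eqref{eq:recall-vol}, and recombine through \eqref{eq:nebula}, absorbing the finitely many error terms into a single $O(T(\log T)^{-\eta+\ve})$. One small slip: you write $G_e(\cA)\subset\mathsf{\Lambda}^*(\ma{E''})$, but in fact $G_e(\cA)\subset\mathsf{\Lambda}(\ma{E''})$ (the lattice $G(\cA)$ contains non-primitive vectors); this is harmless, since the inclusion into $\mathsf{\Lambda}(\ma{E''})$ is all that is needed for the divisibility $E_i''\mid L^*(\x),\,C^*(\x)$.
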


It will be useful to have an expression for $W(\ma{d},\ell)$ as an
Euler product. 
Following the argument in \cite[\S 6]{L1L2Q} almost verbatim one is
led to the conclusion that 
$$
W^{\ve_1,\ve_2}(\bd,\kk,\ell)=\prod_p W_p^{\ve_1,\ve_2}(\bd,\kk,\ell),
$$
where for $p>2$,
\begin{equation}
  \label{eq:Wp}
 W_p^{\ve_1,\ve_2}(\bd,\kk,\ell)
=\Big(1-\frac{\chi(p)}{p}\Big)^2\sum_{\nu_1,\nu_2\geq 0} 
 \frac{ \chi(p^{\nu_1+\nu_2})\rho(p^{N_1},p^{N_2};L_\ell,C_\ell) }{p^{2N_1+2N_2 }},
\end{equation}
with 
$
N_i=\max\{v_p(E_i),\nu_i+v_p(e_i)\}
$ for $i=1,2$, 
and 
\begin{equation}
  \label{eq:W2}
 W_2^{\ve_1,\ve_2}(\bd,\kk,\ell)=4\lim_{n\to \infty}2^{-2n}   
\#\left\{\x \in (\ZZ/2^n
\ZZ)^2: 
\begin{array}{l}  \ell L(\x)\in  \ve_1d_3\mcal{E}\bmod{2^n}  \\
  \ell^3C(\x)\in 
\ve_1d_3 \mcal{E}\bmod{2^n}\end{array}\right\}.
\end{equation}
We have used here the fact that $d_1\equiv d_2\equiv 1 \bmod{4}$ and
$\ve_1\ve_2=1$. 
In our work we will also need a good upper bound for the constant 
$W^{\ve_1,\ve_2}(\bd,\kk,\ell)$ which is uniform in $d$ and $\ell.$
This is recorded in the following result.

\begin{lemma}\label{lem:upper-sig}
We have $W^{\ve_1,\ve_2}(\bd,\kk,\ell)\ll d^{-\frac{1}{6}+\ve}\ell^\ve$ for any $\ve>0$.
\end{lemma}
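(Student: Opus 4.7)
\medskip

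\textbf{Proof plan.} I would exploit the Euler product representation $W^{\ve_1,\ve_2}(\bd,\kk,\ell)=\prod_p W_p^{\ve_1,\ve_2}(\bd,\kk,\ell)$ given by \eqref{eq:Wp} and \eqref{eq:W2}, and bound each local factor by analysing four disjoint classes of primes.

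First, for the exceptional primes $p=2$ and $p\mid 2c_0\Delta\Delta'$ (a finite set depending only on $L$ and $C$) I would use the crude estimate $\rho(p^{N_1},p^{N_2};L_\ell,C_\ell)\le p^{2(N_1+N_2)}$ together with convergence of the sum on $\nu_1,\nu_2$ to obtain $W_p=O(1)$, yielding an overall contribution $O(1)$. For the \emph{good primes} $p\nmid d\ell\cdot 2c_0\Delta\Delta'$ one has $N_i=\nu_i$, the forms $L_\ell,C_\ell$ are $p$-adically primitive, and Lemma~\ref{rhopremier}(1) together with the identity $\rho(p^{\nu_1},1)=p^{\nu_1}$ show that $W_p=1+O(p^{-1-\delta})$ for some absolute $\delta>0$, so the infinite product over these primes converges.

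The main work is at primes $p\mid d$ (Class 3). Since $\mu(d)$ appears in Lemma~\ref{N1-first} we may assume $d$ squarefree, so for each such $p$ exactly one of $v_p(d_1)$, $v_p(d_2)$ is $1$. Assume $p\nmid \ell\cdot 2c_0\Delta\Delta'$.
\emph{Case $p\mid d_1$:} the shift is $N_1=\nu_1+1$, $N_2=\nu_2$. Since $\rho(p,1)=p$, the leading term contributes $\chi(p)/p+O(p^{-2})$, and using the bounds $\rho(p^{\nu_1+1},p^{\nu_2})\ll p^{\nu_1+1+2\nu_2}$ and Lemma~\ref{rhopremier}(2) for the tail, I would show $W_p\ll p^{-1}$.
\emph{Case $p\mid d_2$:} the shift is $N_1=\nu_1$, $N_2=\nu_2+1$. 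The dominant contribution comes from $\nu_1=0$; using the explicit expression for $\rho(1,p^{\nu_2+1})$ in Lemma~\ref{rhopremier}(1) and the resulting geometric series $\sum_{\nu_2\ge 0}\chi(p^{\nu_2})\rho(1,p^{\nu_2+1})p^{-2\nu_2-2}\ll p^{-2/3}$, while the $\nu_1\ge 1$ contribution is controlled by Lemma~\ref{rhopremier}(2)--(3) to give $O(p^{-1})$. Hence $W_p\ll p^{-2/3}$. Multiplying over the two cases yields $\prod_{p\mid d}W_p\ll d_1^{-1+\ve}d_2^{-2/3+\ve}\ll d^{-2/3+\ve}$, comfortably stronger than the claimed $d^{-1/6+\ve}$.

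Finally, at primes $p\mid\ell$ with $p\nmid d$ (Class 4), I would reduce to primitive forms via \eqref{eq:mars}, writing $\rho(p^{N_1},p^{N_2};L_\ell,C_\ell)/p^{2(N_1+N_2)}=\rho(p^{N_1'},p^{N_2'};L^*,C^*)/p^{2(N_1'+N_2')}$ with $N_i'=\max(0,N_i-v_p(\ell_i))$; a direct count of the $O(v_p(\ell)^2)$ pairs with $N_i'=0$ (where $\rho=1$), combined with the Class 2 decay on the tail, gives $W_p\ll v_p(\ell)^{O(1)}$, so $\prod_{p\mid\ell,\,p\nmid d}W_p\ll\ell^\ve$. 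For the (few) primes dividing both $d$ and $\ell$, an analogous combined computation produces $p^{-2/3+\ve}v_p(\ell)^{O(1)}$, which is absorbed into the same overall bound. The main obstacle is the Class 3/Class 4 bookkeeping at primes where $\ell$ contributes non-trivial content to $L_\ell,C_\ell$: the estimate must be uniform in $\ell$ while preserving the saving in~$d$, which forces one to invoke \eqref{eq:mars} carefully and to track the three different regimes $\nu_2\le 3\nu_1$, $\nu_2>3\nu_1$ in Lemma~\ref{rhopremier}(2). Assembling the four classes produces $W^{\ve_1,\ve_2}(\bd,\kk,\ell)\ll d^{-1/6+\ve}\ell^\ve$.
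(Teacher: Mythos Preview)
Your overall strategy --- work with the Euler product $\prod_p W_p^{\ve_1,\ve_2}(\bd,\kk,\ell)$ and split the primes into exceptional, good, dividing $d$, and dividing $\ell$ --- is exactly the route the paper takes. Your Class~3 analysis is in fact sharper than the paper's: at $p\mid d$ you distinguish $p\mid d_1$ from $p\mid d_2$ and use the regime-dependent bounds of Lemma~\ref{rhopremier}(1)--(2), obtaining $W_p\ll p^{-1}$ and $W_p\ll p^{-2/3}$ respectively, hence $\prod_{p\mid d}W_p\ll d^{-2/3+\ve}$. The paper instead takes the cruder geometric mean of the two upper bounds in Lemma~\ref{rhopremier}(3),
\[
\frac{\rho(p^{N_1},p^{N_2})}{p^{2N_1+2N_2}}\ll p^{-N_1/2-N_2/6},
\]
which only yields $p^{-1/6}$ per prime. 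So your saving in $d$ is genuinely stronger. Your Class~4 reduction via \eqref{eq:mars} is also more explicit than the paper's terse ``$\prod_{p\mid D}|W_p|\ll D^\ve$''.

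There is, however, a real gap in Class~2. The assertion that $W_p=1+O(p^{-1-\delta})$ for good primes is false. A direct computation using $\rho(p,1)=p$ and Lemma~\ref{rhopremier}(1) for $\rho(1,p)$ gives
\[
W_p = 1 + \frac{\chi(p)\bigl(\rho_{C(x,1)}(p)-1\bigr)}{p} + O(p^{-4/3}),
\]
and since $\rho_{C(x,1)}(p)\in\{0,1,3\}$ the coefficient of $1/p$ is generically non-zero. The product over good primes therefore does \emph{not} converge absolutely; to bound it by $(d\ell)^\ve$ one must invoke Lemma~\ref{dedekind}, which gives $\sum_{p\le X}\chi(p)\rho_{C(x,1)}(p)/p\ll 1$ (equivalently, the convergence of the relevant Hecke $L$-function at $s=1$). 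The paper handles this by appealing to ``the analysis in the proof of Lemma~\ref{SV1V2}'', which is precisely where this cancellation is exploited. You need to replace your Class~2 argument with this step; once that is done, the rest of your plan goes through and in fact delivers a better exponent in $d$ than the paper records.
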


\begin{proof}
Building on the above Euler product representation of
$W^{\ve_1,\ve_2}(\bd,\kk,\ell)$, 
it is clear that 
$|W_2^{\ve_1,\ve_2}(\bd,\kk,\ell)| \leq 4$. Thus we focus our attention on the
factors corresponding to odd primes. 
When $p>2$ we deduce from part (3) of Lemma \ref{rhopremier} that 
 \begin{align*}
|W_p^{\ve_1,\ve_2}(\bd,\kk,\ell)|
&\ll \sum_{\nu_1,\nu_2\geq 0}
 \frac{\min\{p^{N_1+2N_2},
 p^{2N_1+\frac{5N_2}{3}}\}}{p^{2N_1+2N_2}} \ll \sum_{\nu_1,\nu_2\geq 0}
\frac{1}{p^{\frac{N_1}{2}+\frac{N_2}{6}}}.
\end{align*}
Suppose that $v_p(d_1)=\delta_1$ and $v_p(d_2)=\delta_2$. Since
$d=d_1d_2$ is square-free we may assume that $\delta_1+\delta_2=1$ if
$p\mid d.$ Moreover $N_1 \geq \delta_1+\nu_1$ and $N_2\geq
\delta_2+\nu_2$.
We conclude that
$$
\prod_{\substack{p\mid d}}
|W_p^{\ve_1,\ve_2}(\bd,\kk,\ell)|
\ll d^\ve
\prod_{\substack{p\mid d}}
p^{-\frac{\delta_1+\delta_2}{6}}
\sum_{\nu_1,\nu_2\geq 0}
p^{-\frac{\nu_1}{2}-\frac{\nu_2}{6}}
\ll d^{-\frac{1}{6}+\ve}.
$$
Taking $N_i\geq \nu_i$ it also follows that 
$
\prod_{\substack{p\mid D}}
|W_p^{\ve_1,\ve_2}(\bd,\kk,\ell)|
\ll D^\ve, 
$
for any odd $D \in \NN$.  
Finally the analysis in the proof of Lemma \ref{SV1V2}, which is based on
repeated applications of Lemma \ref{rhopremier},  furnishes the bound
$$
\prod_{\substack{p\nmid 2d\ell \D\D' c_0}}
|W_p(\bd,\kk,\ell)|\ll (d\ell)^\ve.
$$
Putting everything together therefore concludes the proof of the lemma.
\end{proof}

\section{Concluding steps}

We are now ready to draw to a close our proof of Theorem \ref{main},
for which we begin with some technical estimates.
Recall the definition \eqref{18-D} of the set $\DD$ and the definition
\eqref{16-f}  of the function $f_d(n)$. We will need the following
easy result.

\begin{lemma}\label{r^2}
Let $d\in \mathfrak{D}$ be square-free. Then we have
$$
\sum_{\substack{n\leq x\\ n\in \mathfrak{D}}} \frac{f_d(n)}{n}=
\frac{r(d)\phi^\dagger(d)}{\pi} \Big(\log x
+O\big(\log^3 (2+\omega(d))\big)\Big),
$$
where
$\phi^\dagger(d)=\prod_{p \mid d}(1+\frac{1}{p})^{-1}.$
\end{lemma}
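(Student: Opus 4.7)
The plan is to exploit the observation that for any squarefree $d \in \DD$ and any $b \in \DD$, every divisor of $db^2$ lies in $\DD$, so $\chi \equiv 1$ on these divisors and $r(db^2) = 4\tau(db^2)$. Substituting into \eqref{16-f} and noting that $n = ab \in \DD$ forces $a, b \in \DD$, the Dirichlet series $F_d(s) := \sum_{n \in \DD}f_d(n)/n^s$ factors as an Euler product. First I would compute this Euler product, using $\tau(p^{2e+1}) = 2e+2$ for $p \mid d$ and $\tau(p^{2e}) = 2e+1$ for $p \nmid d$, together with $\sum_{a\in\DD}\mu(a)/a^s = \prod_{p\equiv 1(4)}(1-p^{-s})$, to show
\begin{equation*}
F_d(s) = r(d)\prod_{p \mid d}(1-p^{-s})^{-1}\,\Phi_d(s), \qquad
\Phi_d(s) = \prod_{\substack{p \equiv 1\,(4)\\ p \nmid d}}\frac{1+p^{-s}}{1-p^{-s}}.
\end{equation*}

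Next I would extract the pole of $F_d$ at $s=1$. Writing $(1+p^{-s})/(1-p^{-s}) = (1-p^{-2s})/(1-p^{-s})^2$ and comparing with the Euler product of $\zeta(s)L(s,\chi)$, one factorises $\Phi_d(s) = \zeta(s)L(s,\chi)H_d(s)$, with $H_d(s) = (1-2^{-s})\prod_{p\text{ odd}, p\nmid d}(1-p^{-2s})\prod_{p\mid d}(1-p^{-s})^2$ holomorphic and nonvanishing for $\RE (s) > \tfrac12$. Using $L(1,\chi) = \pi/4$ and $\prod_{p\text{ odd}}(1-p^{-2}) = 8/\pi^2$, one computes $\Res_{s=1}\Phi_d(s) = \frac{1}{\pi}\prod_{p\mid d}\frac{1-1/p}{1+1/p}$, and multiplying by $\prod_{p\mid d}(1-1/p)^{-1}$ and by $r(d)$ recovers $\Res_{s=1}F_d(s) = r(d)\phid(d)/\pi$, matching the desired leading constant.

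To pass from the analytic information to the partial sum I would use the equivalent convolution identity $f_d = r(d)\cdot \mathbf{1}_{\cdot\mid d^\infty} * g_2$ on $\DD$, where $g_2$ is supported on $m \in \DD$ coprime to $d$ with $g_2(m) = 2^{\omega(m)}$. A Perron/contour-shift argument applied to $\Phi_d$ produces $G(y) := \sum_{m \le y} g_2(m)/m = R(d)\log y + C_0(d) + O((\log y)^{-A})$ with $R(d) = \Res_{s=1}\Phi_d(s)$ and $|C_0(d)| \ll R(d)\log(2+\omega(d))$, the latter bound following from $H_d'(1)/H_d(1) = -2\sum_{p\mid d}(\log p)/(p-1) + O(1)$ via Mertens. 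Inserting into $\sum_{n \le x}f_d(n)/n = r(d)\sum_{m_1 \mid d^\infty,\, m_1 \le x}G(x/m_1)/m_1$ and invoking Lemma~\ref{convol}, the leading piece combines $R(d)\log x$ with $\sum_{m_1\mid d^\infty}1/m_1 = \prod_{p\mid d}(1-1/p)^{-1}$ to give the main term $(r(d)\phid(d)/\pi)\log x$; the remaining contributions, coming from expanding $\log(x/m_1) = \log x - \log m_1$, from $C_0(d)$, and from truncating the $m_1$-sum at $x$, are each bounded by $r(d)\phid(d)\log^3(2+\omega(d))/\pi$ after invoking the Mertens-type estimate $\sum_{m_1\mid d^\infty}(\log m_1)^j/m_1 \ll \log^{j+1}(2+\omega(d))\prod_{p\mid d}(1-1/p)^{-1}$ for $j \le 2$.

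The hard part will be obtaining the asymptotic for $G(y)$ with an error that is uniform in $d$ and sharp enough that, once amplified by the outer sum over $m_1 \mid d^\infty$, the accumulated error still sits inside $O(\log^3(2+\omega(d)))$. This reduces to bounding the Laurent coefficients of $\Phi_d$ at $s=1$ explicitly in $\omega(d)$, which in turn is controlled by derivatives of $\log H_d$ at $s=1$, i.e.\ by the Mertens-type sums $\sum_{p\mid d}(\log p)^j/(p-1) \ll \log^{j}(2+\omega(d))$ for small $j$.
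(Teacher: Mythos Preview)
Your proposal is correct and follows essentially the same strategy as the paper: compute the Euler product of $F_d(s)=\sum_{n\in\DD}f_d(n)n^{-s}$, identify the simple pole at $s=1$ with residue $r(d)\phid(d)/\pi$, and then pass to the partial sum via a convolution with a function supported on $d^\infty$, estimating the error using the Mertens-type bound $\sum_{p\mid d}(\log p)/p\ll\log(2+\omega(d))$.

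The difference lies in the choice of factorisation. The paper writes
\[
F_d(s)=F_1(s)\,H_d(s),\qquad H_d(s)=\prod_{p\mid d}\frac{2}{1+p^{-s}},
\]
so that $f_d=f_1*h_d$ with $f_1$ genuinely independent of $d$. A single Tauberian estimate for the $d=1$ case then suffices, and all $d$-dependence sits in the finite convolution with $h_d$, whose absolute first moment $\sum_m|h_d(m)|\log(2m)/m$ is bounded directly. Your factorisation $F_d(s)=r(d)\prod_{p\mid d}(1-p^{-s})^{-1}\cdot\Phi_d(s)$ leaves $\Phi_d$ still depending on $d$ through the coprimality restriction, and the ``hard part'' you flag --- a Perron estimate for $G(y)$ uniform in $d$ --- is exactly the extra work this creates. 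In fact your two factors combine: since $\Phi_d(s)=\Phi_1(s)\prod_{p\mid d}\frac{1-p^{-s}}{1+p^{-s}}$, one has
\[
r(d)\prod_{p\mid d}\frac{1}{1-p^{-s}}\cdot\prod_{p\mid d}\frac{1-p^{-s}}{1+p^{-s}}
=r(d)\prod_{p\mid d}\frac{1}{1+p^{-s}}=4H_d(s),
\]
recovering the paper's decomposition. So the paper's route is a one-step version of yours that sidesteps the uniformity issue; your approach works but is more laborious than necessary.
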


\begin{proof}
The proof of Lemma \ref{r^2} involves a straightforward consideration
of the corresponding Dirichlet series
$F_d(s)=\sum_{n\in\mathfrak{D}} f_d(n)n^{-s}.$
Let $r_0(n)=\frac{1}{4}r(n)$.
It is easy to see that 
$$
F_d(s)=4\sum_{m\in \mathfrak{D}}\frac{\mu(m)}{m^{s}}
\sum_{n\in\mathfrak{D}} \frac{r_0(dn^2)}{n^{s}}, 
$$
Let $\delta=\delta_p=v_p(d)$. Then for square-free $d\in \mathfrak{D}$ we have 
$\delta\in \{0,1\}$ and $\delta=1$ if and only if $p\mid d$ and
$p\equiv 1 \bmod{4}$. We now have 
\begin{align*}
F_d(s)
&=
4\prod_{p\equiv 1\bmod{4}} \Big(1-\frac{1}{p^s}\Big)
\prod_{p\equiv 1\bmod{4}}\sum_{\nu\geq 0}
\frac{1+\delta +2\nu}{p^{\nu s}}\\
&=4\prod_{p\equiv 1\bmod{4}} \Big(\frac{1+p^{-s}}{1-p^{-s}}\Big)
\prod_{p\equiv 1\bmod{4}}
\Big(\frac{1+\delta+(1-\delta)p^{-s}}{1+p^{-s}}\Big)\\
&=\frac{4\zeta(s)L(s,\chi)}{(1+2^{-s})\zeta(2s)}H_d(s),
\end{align*}
where 
$$
H_d(s)=
\prod_{p\mid d}
\Big(\frac{2}{1+p^{-s}}\Big)=
r_0(d)\prod_{p\mid d}\Big(1+\frac{1}{p^{s}}\Big)^{-1}.
$$
Noting that $H_1(s)=1$ we clearly have 
$
F_d(s)=F_1(s)H_d(s).
$

The Dirichlet series $F_1(s)$ is  meromorphic in the region $\Re
e(s)>\frac{1}{2}$, with a simple pole 
at $s=1$. Moreover there is an arithmetic function $h_d(n)$,
arising from the Dirichlet series $H_d(s)$, such that
$f_d=f_1*h_d$.  On applying
a Tauberian theorem one easily deduces that
the statement of Lemma \ref{r^2} is true when $d=1$.  To see the general case we note
that
\begin{align*}
\sum_{n\leq x} \frac{f_d(n)}{n} = \sum_{m\leq x} \frac{h_d(m)}{m}
\sum_{n\leq \frac{x}{m}}\frac{f_1(n)}{n}=
  \sum_{m\leq x} \frac{h_d(m)}{m} \Big(\frac{4\log x}{\pi} +O(\log 2m)\Big).
\end{align*}
Here 
\begin{align*}
\sum_{m=1}^\infty \frac{|h_d(m)|\log 2m}{m} &\leq
r_0(d)\phid(d)^{-1}\Big(1+\sum_{p\mid d}\frac{\log p}{p}\Big)\\
&\ll
r(d)\phid(d) \phid(d)^{-2}\log(2+\omega(d))\\
& \ll
r(d)\phid(d)  \log^3(2+\omega(d)),
\end{align*}
since
$$
\sum_{p\mid d}\frac{\log p}{p} \leq \sum_{j\leq \omega(d)}\frac{\log
  p_j}{p_j}\ll \log(2+\omega(d)).
$$
On inserting this into the previous formula, we therefore 
complete the proof of the lemma since $H_d(1)=r_0(d)\phi^\dag(d)$.
\end{proof}

Building on Lemma \ref{r^2}, we may  record the inequalities
\begin{equation}
  \label{eq:ps+r^2}
\sum_{\substack{n \leq x\\ n \in \mathfrak{D}}}
\frac{|f_d(n)|}{n^\theta} 
\leq x^{1-\theta}
\sum_{\substack{n \leq x\\ n \in \mathfrak{D}}} \frac{|f_d(n)|}{n} 
\ll d^\ve x^{1-\theta}\log x,
\end{equation}
for any $\ve>0$ and $0<\theta\leq 1$.  
For the deduction of Theorem \ref{main}, we wish to incorporate
the asymptotic formula in Lemma \ref{rlc:main}
into our expression for $N(B)$ in Lemma~\ref{N1-first}. 
Note that there is no uniformity in any of the parameters $\bd,\kk,
\ell$ that feature in Lemma \ref{rlc:main}.
Let us set 
$$
S(B)=S_{\bd,\kk,\ell}^{\ve_1,\ve_2}(B)=
\sum_{\substack{n\leq N\\ n \in \DD}} f_d(n) 
\UU\Big(\frac{B}{d\ell^2 n}\Big),
$$
with $N=\frac{cB}{d^{\frac{5}{4}}\ell}$ for some absolute constant $c>0$,
so that
$$
N(B)
=
\frac{1}{2^5}\sum_{\ell=1}^\infty\mu(\ell)
\sum_{d \in \DD} \mu(d)
\sum_{\substack{\ve_1,\ve_2 \in\{\pm 1\}\\ \ve_1\ve_2=1}}
\sum_{\substack{\dd\in\NN^3\\ d=d_1d_2\\ d_3 \mid \D}}
\chi(d_3)\mu(d_3)
\sum_{kk'\mid \gcd(\D,d)}\frac{\mu(k')}{2^{\omega(k)}} S(B).
$$
Let 
$$
E^{\ve_1,\ve_2}(\bd,\kk,\ell)=\frac{1}{B\log B}\Big|S(B)- \frac{\pi W^{\ve_1,\ve_2}(\bd,\kk,\ell)
  \vol(R^{\ve_1,\ve_2}(1))r(d)\phid(d)B\log B}{d\ell^2}\Big|.
$$
Then it follows from Lemmas  \ref{rlc:main} and \ref{r^2} that 
for fixed $\bd,\kk,\ell$ we have 
$$
E^{\ve_1,\ve_2}(\bd,\kk,\ell)\rightarrow 0
$$ 
as $B \rightarrow \infty$. 
On the other hand, we conclude from \eqref{eq:ps+r^2} and
Lemmas \ref{lem:UU-upper} and \ref{lem:upper-sig}, that 
$$
E^{\ve_1,\ve_2}(\bd,\kk,\ell)\ll (d \ell)^{\ve}
\gcd(d,\ell)\Big(\frac{1}{d^2\ell^2}+
\frac{1}{d^{\frac{9}{8}}\ell^\frac{3}{2}}+\frac{1}{d^{\frac{7}{6}}\ell^2}\Big)
\ll (d \ell)^{\ve}
\frac{\gcd(d,\ell)}{d^{\frac{9}{8}}\ell^\frac{3}{2}},
$$
uniformly in $d,\ell$ and $B$.
Note that 
$$
\sum_\ell \sum_d\sum_{\ve_1,\ve_2}\sum_{\bd}
\sum_{\kk}
E^{\ve_1,\ve_2}(\bd,\kk,\ell)\ll 1.
$$
Writing $r_0(n)=\frac{1}{4}r(n)$, it therefore follows from the dominated
convergence of this sum 
that as $B\rightarrow \infty$ we have 
$N(B)\sim c_0 B\log B$, with 
\begin{equation}\begin{split}\label{eq:c0}
c_0=
\frac{\pi }{2^3}\sum_{\ell=1}^\infty\frac{\mu(\ell)}{\ell^2}
&
\sum_{d \in \DD} 
 \frac{\mu(d) r_0(d)\phid(d)}{d}
\sum_{\substack{\ve_1,\ve_2 \in\{\pm 1\}\\ \ve_1\ve_2=1}}
\vol(R^{\ve_1,\ve_2}(1))\\
&\times \sum_{\substack{\dd\in\NN^3\\ d=d_1d_2\\ d_3 \mid \D}}
\chi(d_3)\mu(d_3)
\sum_{kk'\mid \gcd(\D,d)}\frac{\mu(k')}{2^{\omega(k)}} 
W^{\ve_1,\ve_2}(\bd,\kk,\ell).
\end{split}\end{equation}

In order to complete the proof of Theorem \ref{main}, it remains to
show that $c_0=c_X$ is the constant predicted by Peyre \cite{p}. Given
the general strategy in our earlier work \cite{isk}, we will be brief.  
In particular, since $X$ is $\QQ$-rational, it is easy to relate the 
value of the constant to the count on the 
torsor $\mathcal{T}$ 
considered in \eqref{eq:tree}. One finds
that
$$
c_X=\omega_\infty \prod_p \omega_p,
$$
where $\omega_\infty$ and $\omega_p$ denote the local densities
associated to $\mathcal{T}$ taken with respect to the Leray measure. 
Using symmetry to restrict to the quadrant in which $y>0$ and $z>0$,
it follows that
$$
\omega_\infty =2\lim_{B\rightarrow \infty} \frac{1}{B\log B}
\int_{\mathcal{D}} \frac{\d u\d v\d t \d z}{2\sqrt{t^2LC(u,v)-z^2}},
$$
where 
we have set $LC(u,v)=L(u,v) C(u,v)$ and 
$\mathcal{D}$ is the set of $(u,v,t,z)\in \RR^4$ such that 
$$
0<\max\{u^2,v^2\}t\leq B, \quad 0<z<t\sqrt{LC(u,v)}, \quad 1\leq t\leq
B, \quad LC(u,v)>0. 
$$
In view of the familiar formula
$$
\int_0^{\sqrt{S}}\frac{\d s}{\sqrt{S-s^2}}=\frac{\pi}{2},
$$
it readily follows that
$$
\omega_\infty=\frac{\pi}{2}
\sum_{\substack{\ve_1,\ve_2 \in\{\pm 1\}\\ \ve_1\ve_2=1}}
\vol(R^{\ve_1,\ve_2}(1)).
$$
Turning the $p$-adic densities, we have 
$$
\omega_p = \lim_{n\rightarrow \infty} p^{-4n}\left\{
(y,z ,t,u,v)\in \mathcal{T}(\ZZ/p^n\ZZ): p\nmid (u,v), p\nmid(y,z,t)
\right\}.
$$
Recall the definition of $\mathcal{E}$ from \S \ref{s:intro} and the
identities \cite[Eqs. (2.3) and (2.5)]{4linear}.  To
calculate $\omega_2$ we observe that $t$ is odd in any solution to be
counted. Since
there are $2^{n-1}$ odd integers in the interval $[1,2^n]$ it follows
that 
\begin{align*}
\omega_2
&
=\lim_{n\to \infty}2^{-3n-1}   
\#\left\{(u,v,y,z) \in (\ZZ/2^n
\ZZ)^4: 
\begin{array}{l}
LC(u,v)\equiv y^2+z^2 \bmod{2^n}, \\
2\nmid (u,v)
\end{array}
\right\}\\
&
=\lim_{n\to \infty}2^{-2n}   
\#\left\{(u,v) \in (\ZZ/2^n
\ZZ)^2: 
LC(u,v)\in  \mcal{E}\bmod{2^n}, ~2\nmid (u,v)\right\}.
\end{align*}
For any binary form $F\in \ZZ[u,v]$ and prime power $p^e$,
 let 
\begin{equation}
  \label{eq:dents}
\widetilde{\rho}_{F}(p^e)=
p^{-2(e+1)}
\#\left\{ (u,v)\in
 (\ZZ/p^{e+1}\ZZ)^2: ~ p^e\mid F(u,v), ~p\nmid (u,v)\right\}.
\end{equation}
Suppose now that $p\equiv 3 \bmod{4}$.  Then we obtain
\begin{align*}
\omega_p
&
=\lim_{n\to \infty}\frac{1-\frac{1}{p}}{p^{3n}}   
\#\left\{(u,v,y,z) \in (\ZZ/p^n
\ZZ)^4: 
\begin{array}{l}
LC(u,v)\equiv y^2+z^2 \bmod{p^n}, \\
p\nmid (u,v)
\end{array}
\right\}\\
&=
\Big(1-\frac{1}{p^2}\Big)\sum_{\nu\geq 0} 
(-1)^\nu\widetilde{\rho}_{LC}(p^\nu)
\end{align*}
Finally, when $p\equiv 1 \bmod{4}$, we break the cardinality according
to the value of $v_p(t)$. It follows that
$$
\omega_p
= 1-\frac{1}{p^2} +\Big(1-\frac{1}{p}\Big)^2
\sum_{\nu\geq 1} \widetilde{\rho}_{LC}(p^\nu),
$$
in this case.

We now return 
to our expression \eqref{eq:c0} for $c_0$. Carrying out
the summation over $\ell$, finding that
$$
\sum_{\ell=1}^\infty\frac{\mu(\ell)}{\ell^2}
W^{\ve_1,\ve_2}(\bd,\kk,\ell)=\widetilde{W}^{\ve_1,\ve_2}(\bd,\kk)=
\prod_p\widetilde{W}_p^{\ve_1,\ve_2}(\bd,\kk),
$$
for suitable factors $\widetilde{W}_p^{\ve_1,\ve_2}(\bd,\kk)$.
In view of \eqref{eq:W2}, one has
\begin{align*}
 \widetilde{W}_2^{\ve_1,\ve_2}(\bd,\kk)
&=4\lim_{n\to \infty}2^{-2n}   
\#\left\{\x \in (\ZZ/2^n
\ZZ)^2: 
\begin{array}{l}  L(\x)\in  \ve_1d_3\mcal{E}\bmod{2^n},  \\   C(\x)\in
\ve_1d_3 \mcal{E}\bmod{2^n},\\
2\nmid \x
\end{array}\right\}.
\end{align*}
It is clear that for any $\x$ counted here we have 
both $LC(\x) \in \mcal{E} \bmod{2^n}$ and 
$LC(-\x) \in \mcal{E} \bmod{2^n}$. Conversely, if $\x
\in (\ZZ/2^n
\ZZ)^2$ satisfies 
$LC(\x) \in \mcal{E} \bmod{2^n}$, then either 
$L(\x) \in \ve_1d_3\mcal{E} \bmod{2^n}$ or 
$L(\x) \in -\ve_1d_3\mcal{E} \bmod{2^n}$. In this way we conclude that
$$
 \widetilde{W}_2^{\ve_1,\ve_2}(\bd,\kk)
=2\omega_2,
$$
in the above notation. 
Next, when $p>2$ we deduce from
\eqref{eq:Wp} that
$$
 \widetilde{W}_p^{\ve_1,\ve_2}(\bd,\kk)
=\Big(1-\frac{\chi(p)}{p}\Big)^2\sum_{\nu_1,\nu_2\geq 0} 
\chi(p^{\nu_1+\nu_2})\widetilde{\rho}
(p^{N_1},p^{N_2}),
$$
with 
$$
\widetilde{\rho}
(p^{N_1},p^{N_2}) =p^{-2(N_1+N_2+1)}\#
\left\{ \x \in (\ZZ/p^{N_1+N_2+1}\ZZ)^2: 
\begin{array}{l}
p^{N_1}\mid L(\x), ~ p^{N_2}\mid C(\x),\\ 
p\nmid \x
\end{array}
\right\}. 
$$
Thus 
$\widetilde{W}_p^{\ve_1,\ve_2}(\bd,\kk)$ is independent of $\ve_1,\ve_2$
and so $ \widetilde{W}_p^{\ve_1,\ve_2}(\bd,\kk)
= \widetilde{W}_p(\bd,\kk)$, say.

An easy calculation reveals that 
$$
\prod_{p}\frac{1-\frac{\chi(p)}{p}}{1+\frac{\chi(p)}{p}}=
\frac{4}{\pi}\cdot \frac{\pi}{2}=2.
$$
Our work so far has therefore shown that 
$
c_0=
\omega_\infty \omega_2 \tau,
$
with 
\begin{align*}
\tau=
\sum_{d \in \DD} 
 \frac{\mu(d) r_0(d)\phid(d)}{d}
\sum_{\substack{\dd\in\NN^3\\ d=d_1d_2\\ d_3 \mid \D}}
\chi(d_3)\mu(d_3)
\hspace{-0.2cm}
\sum_{kk'\mid \gcd(\D,d)}\frac{\mu(k')}{2^{\omega(k)}} 
\prod_{p>2}\Big(
\frac{1+\frac{\chi(p)}{p}}{1-\frac{\chi(p)}{p}}
\Big)
\widetilde{W}_p(\bd,\kk).
\end{align*}
We may write $\tau=\prod_{p>2}\tau_p$. Our final task in this
paper is to show that $\tau_p=\omega_p$ for each odd prime $p$.

Let $\alpha=v_p(\Delta)$. We will deal here only with the harder case
$\alpha\geq1$, the case $\alpha=0$ being an easy modification. 
Suppose that $p\equiv
3\bmod {4}$. In this case it is clear that
\begin{align*}
\tau_p
&=
\Big(
\frac{1-\frac{1}{p}}{1+\frac{1}{p}}
\Big)
\Big(1+\frac{1}{p}\Big)^2
\sum_{\nu_1,\nu_2\geq 0} 
\sum_{0\leq \delta_3 \leq  1}
(-1)^{\nu_1+\nu_2}\widetilde{\rho}
(p^{\nu_1+\delta_3},
p^{\nu_2+\delta_3})\\
&=
\Big(
1-\frac{1}{p^2}\Big)
\sum_{\mu_1,\mu_2\geq 0} 
\sum_{0\leq \delta_3 \leq 1}
\overline{\rho}(p^{2\mu_1+\delta_3}, p^{2\mu_2+\delta_3}),
\end{align*}
where
$$
\overline{\rho}
(p^{n_1},p^{n_2})=
p^{-2(n_1+n_2+1)}
\#\left\{ (u,v)\in
 (\ZZ/p^{n_1+n_2+1}\ZZ)^2: ~ 
\begin{array}{l}
p^{n_1}\| L(u,v), \\
p^{n_2}\| C(u,v), \\
p\nmid (u,v)
\end{array}
\right\}.
$$
Setting 
$\overline{\rho}
(p^{n})$ for the analogous density in which one has $p^n\| 
LC(u,v)$
instead of the pair of conditions present 
in 
$\overline{\rho}(p^{n_1},p^{n_2})$, one finds that 
$$
\tau_p=
\Big(
1-\frac{1}{p^2}\Big)
\sum_{\mu\geq 0} 
\overline{\rho}(p^{2\mu})=\omega_p,
$$
as required.

Suppose now that $p\equiv 1\bmod{4}$.
Then we have
\begin{align*}
\tau_p=
\Big(
1-\frac{1}{p^2}
\Big)
\sum_{0\leq \delta\leq 1} 
 \frac{(-1)^\delta r_0(p^{\delta})\phid(p^{\delta})}{p^{\delta}}
\sum_{\substack{
\delta_1,\delta_2,\delta_3\in \{0,1\}\\
\delta_1+\delta_2=\delta}}
(-1)^{\delta_3} 
f_p(\delta_1,\delta_2,\delta_3)
\end{align*}
with
\begin{align*}f_p(\delta_1,\delta_2,\delta_3)=
\sum_{\nu_1,\nu_2\geq 0} 
\sum_{
\substack{
\kappa,\kappa'\geq 0\\
\kappa+\kappa'\leq \delta}}
\frac{(-1)^{\kappa'}}{2^{\kappa}} 
\widetilde{\rho}
(p^{N_1},
p^{N_2}) 
\end{align*}
and 
$N_i=\max\{\kappa+\kappa',\nu_i+\delta_i+\delta_3\}$ for 
$i=1,2$.
We claim that
\begin{equation}
  \label{eq:jeudi}
f_p(\delta_1,\delta_2,\delta_3)=
\sum_{\nu_1,\nu_2\geq 0} 
\frac{(\nu_1+1) (\nu_2+1)}{2^{\min\{\delta,N_1',N_2'\}}}
\overline{\rho}(p^{N_1'},p^{N_2'}),
\end{equation}
with 
$N_i'=\nu_i+\delta_i+\delta_3$ for $i=1,2$.
We begin by noting that
\begin{align*}
f_p(\delta_1,\delta_2,\delta_3)
=
\sum_{\nu_1,\nu_2\geq 0} 
\sum_{
\substack{
\kappa,\kappa'\geq 0\\
\kappa+\kappa'\leq \min\{\delta,
N_1',N_2'\}}}
\frac{(-1)^{\kappa'}}{2^{\kappa}}
(\nu_1+1) (\nu_2+1)\overline{\rho}(p^{N_1'},p^{N_2'}).
\end{align*}
But it is clear that 
$$
\sum_{
\substack{
\kappa,\kappa'\geq 0\\
\kappa+\kappa'\leq \min\{\delta,
N_1',N_2'\}}}
\frac{(-1)^{\kappa'}}{2^{\kappa}}
=
\frac{1}{2^{\min\{\delta,N_1',N_2'\}}},
$$
from which the claim follows.

Given \eqref{eq:jeudi} we are now led to consider 
the quantity
\begin{align*} 
f_p(\delta)=
\sum_{\substack{
\delta_1,\delta_2,\delta_3\in \{0,1\}\\
\delta_1+\delta_2=\delta}}
(-1)^{\delta_3} 
\sum_{\nu_1,\nu_2\geq 0} 
\frac{(\nu_1+1) (\nu_2+1)}{2^{\min\{\delta,N_1',N_2'\}}}
\overline{\rho}(p^{N_1'},p^{N_2'}),
\end{align*}
for each $\delta\in \{0,1\}$.
Let $N_i''=\nu_i+ \delta_i$ for $i=1,2$.
We may write 
\begin{align*} 
f_p(\delta)  &=
\sum_{\substack{
 \delta_1,\delta_2\geq 0\\
\delta_1+\delta_2=\delta }}
\sum_{\nu_1,\nu_2\geq 0} \big(
 (\nu_1+1) (\nu_2+1) 
 - \nu_1   \nu_2 
\big)\frac{\overline{\rho}(p^{N_1''},p^{N_2''})}{
2^{\min\{\delta,N_1'',N_2''\}}}\\ 
 &=
 \sum_{\substack{
 \delta_1,\delta_2\geq 0\\
\delta_1+\delta_2=\delta }}
\sum_{\nu_1,\nu_2\geq 0}  
 (\nu_1+\nu_2+1)
 \frac{\overline{\rho}(p^{N_1''},p^{N_2''})}{2^{\min\{\delta,N_1'',N_2''\}}}.  
\end{align*}
When $\delta=1$ and 
$$
\min\{1,v_p(L(\x)),v_p(C(\x))\}=\min\{1,N_1'',N_2''\}\geq
1,
$$
with $p^{\nu +\delta}\| LC(\x)$,  there are
two choices of $(\delta_1,\delta_2)$ such that
$\delta_1+\delta_2=\delta$, $p^{\nu_1+\delta_1}\| L(\x)$, $
p^{\nu_2+\delta_2}\| C(\x)$ and $\nu=\nu_1+\nu_2$.   
Thus
\begin{align*}f_p(\delta)
=\sum_{\nu\geq 0}(\nu+1) \overline{\rho}(\nu+\delta)
=\sum_{\nu\geq 0}    \widetilde{\rho}_{LC}(p^{\nu+\delta}),
\end{align*}
in this case. The same is true when $\delta=0$. 
Recalling that $p^{-1}\phid(p)=(p+1)^{-1}$, 
we deduce that
\begin{align*}
\tau_p
&= 
\Big(1-\frac{1}{p^2}\Big) \sum_{0\leq \delta\leq 1} 
 \frac{(-1)^\delta r_0(p^{\delta})\phid(p^{\delta})}{p^{\delta}} f_p(\delta)
\\
&= 
\Big(1-\frac{1}{p^2}
\Big)\Big\{1+\sum_{\nu\geq 1}
\widetilde{\rho}_{LC}(p^{\nu })
\Big(1-\frac{2}{ p+1}\Big)\Big\}\\
&= \omega_p.
\end{align*}
This completes the proof that the value of the leading 
constant in Theorem \ref{main} agrees
with the prediction of Peyre.

\end{document}